\newtheorem{thm}{Theorem}[section]
\newtheorem{defn}[thm]{Definition}
\newtheorem{lem}[thm]{Lemma}
\newtheorem{cor}[thm]{Corollary}
\newtheorem{prop}[thm]{Proposition}
\newtheorem{rem}[thm]{Remark}
\newcommand{\ind}[1]{\mathbf{1}_{#1}} 
\newcommand{\PSH}[2]{\text{PSH}(#1, #2)}
\newcommand{\env}[2]{P_{#2}(#1)}
\newcommand{\be}{\begin{equation}}
\newcommand{\ee}{\end{equation}}
\newcommand{\bea}{\begin{eqnarray}}
\newcommand{\eea}{\end{eqnarray}}
\newcommand{\ben}{\begin{eqnarray*}}
	\newcommand{\een}{\end{eqnarray*}}
\newcommand{\bt}{\begin{split}}
	\newcommand{\et}{\end{split}}
\newcommand{\bet}{\begin{equation}}
\begin{document}
\title[Weak convergence of Monge-Amp\`ere operator]
{Weak convergence of complex Monge-Amp\`ere operators on compact Hermitian manifolds}

\author[K. Pang]{Kai Pang}
\address{Kai Pang: School of Mathematical Sciences\\ Beijing Normal University\\ Beijing 100875\\ P. R. China}
\email{202331130031@mail.bnu.edu.cn}
\author[H. Sun]{Haoyuan Sun}
\address{Haoyuan Sun: School of Mathematical Sciences\\ Beijing Normal University\\ Beijing 100875\\ P. R. China}
\email{202321130022@mail.bnu.edu.cn}
\author[Z. Wang]{Zhiwei Wang}
\address{Zhiwei Wang: Laboratory of Mathematics and Complex Systems (Ministry of Education)\\ School of Mathematical Sciences\\ Beijing Normal University\\ Beijing 100875\\ P. R. China}
\email{zhiwei@bnu.edu.cn}

\begin{abstract}
     Let $(X,\omega)$ be a compact Hermitian manifold and let $\{\beta\}\in H^{1,1}(X,\mathbb R)$  be a real $(1,1)$-class with a smooth representative $\beta$, such that $\int_X\beta^n>0$. Assume that there is a  bounded $\beta$-plurisubharmonic function $\rho$ on $X$.  First, we provide a criterion for the weak convergence of non-pluripolar complex Monge-Amp\`ere measures associated to  a sequence of $\beta$-plurisubharmonic functions.  Second, this criterion is utilized to solve a degenerate complex Monge-Amp\`ere equation with an $L^1$-density. Finally, an $L^\infty$-estimate of the solution to the complex Monge-Amp\`ere equation for a finite positive Radon measure is given.
\end{abstract}

\subjclass[2010]{32W20, 32U05, 32U40, 53C55}
\keywords{Non-pluripolar complex Monge-Amp\`ere operator, convergence in capacity, $L^1$-density, degenerate complex Monge-Amp\`ere equation}

\maketitle

\tableofcontents

\section{Introduction}

 Let $(X, \omega)$ be a compact Hermitian manifold of complex dimension $n$, with a Hermitian metric $\omega$.   The nef cone and pseudoeffective cone in $H^{1,1}(X, \mathbb{R})$ are crucial objects of study, closely related to the geometric structure of the manifold $X$. However, the current research methods and tools for these cones are limited. Inspired by the significant work done in the K\"ahler setting (see e.g. \cite{DeP04,BDPP13}), it is believed that utilizing the Monge-Amp\`ere equation could be a promising approach.
   In the study of degenerate complex Monge-Amp\`ere equations, we usually need to consider the convergence of complex Monge-Amp\`ere operators.  For any Borel subset $E\subset X$, the capacity of $E$ with respect to $\omega$ is defined as:
   $$\text{Cap}_{\omega}(E):=\sup\left\{\int_E(\omega+dd^cv)^n:v\in \text{PSH}(X,\omega),0 \leq v \leq 1 \right\}.$$
   We say a sequence $\{\varphi_j\}_{j=1}^\infty\subset  \mbox{PSH}(X,\beta)$ converges in capacity to $\varphi\in \mbox{PSH}(X,\beta)$, if for a given $\varepsilon>0$,
   $$\lim_{j\rightarrow+\infty}\mbox{Cap}_{\omega}(|\varphi_j-\varphi|>\varepsilon)=0.$$
    It is known that the complex Monge-Amp\`ere operator is not continuous with respect to the $L^1$-topology (see an example due to Cegrell in \cite[Example 3.25]{GZ17}), and it is continuous with respect to the convergence in capacity of  uniformly bounded (quasi-)psh functions \cite{X09} on compact K\"ahler manifolds. When the sequence of quasi-psh functions is not uniformly bounded, there are also many important results for non-pluripolar complex Monge-Amp\`ere operators in the compact K\"ahler case; see \cite{P08,DV22,DP12,DDL23} and references therein.

  Recently, similar results were generalized to the Hermitian setting. Kolodziej-Nguyen \cite{KN22} proved the following interesting result: for a Hermitian metric $\omega$ on $X$, if the sequence $\{u_j\}$ is uniformly bounded $\omega$-psh functions such that $u_j\rightarrow u$ in $L^1(X,\omega^n)$ and $(\omega+dd^cu_j)^n\leq C(\omega+dd^c\varphi_j)$ for some uniformly bounded sequence $\{\varphi_j\} $ such that $\varphi_j\rightarrow \varphi\in PSH(X,\omega)$ in capacity, then $(\omega+dd^cu_j)^n$ converges weakly to $(\omega+dd^cu)^n$. This is a generalization of \cite[Lemma 2.1]{CK06} from the local setting to compact Hermitian manifolds that is pointed out in the proof of \cite[Lemma 2.11]{KN23b}  (see also \cite{KN23a}).

  The degenerate case was studied by Alehyane-Lu-Salouf \cite{ALS24}. Let $\beta$ be a smooth semipositive $(1,1)$-form with $\int_X\beta^n>0$, and assume that there is a Hermitian metric  $\omega$  on $X$ such that $dd^c\omega=0$ and $d\omega\wedge d^c\omega=0$. Let $u_j, u\in \mathcal E(X,\beta)$ and $u_j\rightarrow u$ in $L^1(X,\omega^n)$. It is proved in \cite{ALS24} that if $\langle(\beta+dd^cu_j)^n\rangle\leq \mu$ for some positive non-pluripolar measure, then $u_j\rightarrow u$ in capacity and the non-pluripolar complex Monge-Amp\`ere measure $\langle(\beta+dd^cu_j)^n\rangle\rightarrow \langle(\beta+dd^cu)^n\rangle$  in the weak sense. Here $\mathcal E(X,\beta)$ denotes the set of $\beta$-psh functions with full Monge-Amp\`ere mass, and $\langle(\beta+dd^c\cdot)^n\rangle$ represents the non-pluripolar complex Monge-Amp\`ere measure. For detailed definitions, see\cite [\S 2]{ALS24}.

 Now let $\beta$ be a smooth real closed $(1,1)$ form, which is not necessarily semipositive.
  A function $u:X\rightarrow [-\infty,+\infty)$ is called quasi-plurisubharmonic if locally $u$ can be written as the sum of a smooth function and a plurisubharmonic function.
  A $\beta$-plurisubharmonic ($\beta$-psh for short) function $u$ is defined as a quasi-plurisubharmonic function satisfying $\beta+dd^cu\geq 0$ in the sense of currents.
  The set of all $\beta$-psh functions on $X$ is denoted by $\mbox{PSH}(X,\beta)$.
  Suppose that there exists a function $\rho\in \mbox{PSH}(X,\beta)\cap L^{\infty}(X)$.
  We define a $(\beta+dd^c\rho)$-psh function to be a function $v$ such that $\rho+v\in \mbox{PSH}(X,\beta)$, and denote by $\mbox{PSH}(X,\beta+dd^c\rho)$ the set of all $(\beta+dd^c\rho)$-psh functions.

 Following \cite{BT87,GZ07,BEGZ10}, Li-Wang-Zhou introduced in \cite{LWZ24a} the non-pluripolar complex Monge-Amp\`ere operator  $\langle (\beta+dd^c\varphi)^n\rangle$ for  any   $\varphi\in \mbox{PSH}(X,\beta)$.
 Denote by $\mathcal E(X,\beta)$ the class of all $\varphi \in \text{PSH}(X,\beta)$ with full Monge-Amp\`ere mass, i.e.,
 $\int_X\langle (\beta+dd^c\varphi)^n\rangle=\int_X\beta^n$.

In this paper, we are interested in studying the criteria for the weak convergence of the non-pluripolar complex Monge-Amp\`ere operator. We establish the following theorem:

\begin{thm}\label{thm: main-1}
Let $(X,\omega)$ be a compact Hermitian manifold of complex dimension $n$, equipped with a Hermitian metric $\omega$. Let $\beta$ be a smooth closed real $(1,1)$-form with $\int_X\beta^n>0$.  Assume that there is a bounded $\beta$-psh function $\rho$ on $X$. Let  $u_j\in \mathcal{E}(X, \beta)$,  such that $(\beta +dd^c u_j)^n\leq \mu$ for some positive non-pluripolar Radon measure $\mu$. If $u_j\rightarrow u\in PSH(X, \beta)$ in $L^1(X)$, then $u\in \mathcal{E}(X, \beta)$ and $u_j\rightarrow u$ in capacity. Furthermore, $\langle(\beta+dd^cu_j)^n\rangle$   converges weakly to $\langle(\beta+dd^cu)^n\rangle$.
\end{thm}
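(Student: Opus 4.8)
The plan is to follow the strategy of Alehyane--Lu--Salouf \cite{ALS24}, adapted to the Hermitian setting with a non-necessarily-semipositive class and the presence of a bounded potential $\rho$, which lets us work with $\beta+dd^c\rho$ and reduce to a situation with a bounded potential available. The proof breaks into three parts: (i) $u\in\mathcal E(X,\beta)$; (ii) $u_j\to u$ in capacity; (iii) weak convergence of the Monge-Amp\`ere measures.

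\medskip

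\emph{Step 1: $u\in\mathcal E(X,\beta)$.} The key input is that the total masses $\int_X\langle(\beta+dd^cu_j)^n\rangle=\int_X\beta^n$ are all maximal, and that they are dominated by the fixed finite measure $\mu$; so $\{\langle(\beta+dd^cu_j)^n\rangle\}$ is uniformly absolutely continuous with respect to $\mu$ in the trivial sense of being bounded above by it. I would use the semicontinuity of the non-pluripolar Monge-Amp\`ere mass under $L^1$-convergence together with a comparison/monotonicity argument: since $u_j\to u$ in $L^1$, one has $\int_X\langle(\beta+dd^cu)^n\rangle\geq \limsup_j \int_K \langle(\beta+dd^cu_j)^n\rangle$ on compact pieces of a plurifine exhaustion, and domination by $\mu$ forbids mass escaping to a pluripolar set. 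Concretely I would invoke the analogue of \cite[Theorem]{BEGZ10}/\cite{DDL23} in the Hermitian case (available from \cite{LWZ24a}) that a decreasing-type or $L^1$-limit of functions with uniformly bounded and non-escaping mass again lies in $\mathcal E$, the point being that $\mu(\text{pluripolar set})=0$ kills the defect.

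\medskip

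\emph{Step 2: convergence in capacity.} This is the technical heart. I would first pass to $v_j:=u_j-\rho$ (more precisely, use $\rho$ to get a global bounded $\beta$-psh comparison function) and use the Hartogs-type lemma: $L^1$-convergence plus uniform upper bounds give that $(\sup_{k\geq j}u_k)^*$ decreases to $u$. The comparison with $\mu$ is used as in \cite{ALS24}: fix $\varepsilon>0$ and estimate $\mathrm{Cap}_\omega(u_j-u>\varepsilon)$ and $\mathrm{Cap}_\omega(u-u_j>\varepsilon)$ separately. For the first, the Hartogs lemma and quasicontinuity handle it. For the second, one uses a comparison principle on the Hermitian manifold (the version valid in $\mathcal E(X,\beta)$, cf. \cite{LWZ24a}) applied to $u_j$ and $u$: on the set where $u<u_j-\varepsilon$ one bounds the capacity by $\varepsilon^{-n}\int_{\{u<u_j-\varepsilon\}}\langle(\beta+dd^cu_j)^n\rangle\leq \varepsilon^{-n}\mu(\{u<u_j-\varepsilon\})$, and since $u_j\to u$ in $L^1$ (hence a subsequence a.e., and in capacity of the ``good'' direction) the sets $\{u<u_j-\varepsilon\}$ shrink to a pluripolar set, on which $\mu$ vanishes; a maximum-principle/subsequence argument upgrades this to full convergence. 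The Hermitian error terms (the $dd^c$ of a Hermitian form not being zero) are absorbed using the boundedness of $\rho$ and integration-by-parts estimates à la Kolodziej--Nguyen \cite{KN22}.

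\medskip

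\emph{Step 3: weak convergence of $\langle(\beta+dd^cu_j)^n\rangle$.} Once $u_j\to u$ in capacity and all functions lie in $\mathcal E(X,\beta)$ with masses dominated by $\mu$, I would run the standard canonical-approximation argument: write $u_j^{(t)}:=\max(u_j,\rho-t)$ and $u^{(t)}:=\max(u,\rho-t)$, which are bounded $\beta$-psh functions decreasing to $u_j$, $u$ as $t\to\infty$, and still converge in capacity in $j$ for each fixed $t$ (since truncation is $1$-Lipschitz in the relevant sense). For bounded functions, continuity of the Monge-Amp\`ere operator along sequences converging in capacity on compact Hermitian manifolds is exactly \cite[Lemma 2.1]{CK06}-type / Kolodziej--Nguyen \cite{KN22}, so $\langle(\beta+dd^cu_j^{(t)})^n\rangle\to\langle(\beta+dd^cu^{(t)})^n\rangle$ weakly. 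Then let $t\to\infty$: the non-pluripolar Monge-Amp\`ere measures of the truncations converge to those of $u_j$, $u$ in total mass (by definition of $\mathcal E$) and increasingly on plurifine open sets, and the uniform domination $\langle(\beta+dd^cu_j^{(t)})^n\rangle\mathbf 1_{\{u_j>\rho-t\}}\leq\mu$ gives the uniform-in-$j$ control needed to exchange the two limits (a standard diagonal/Fatou argument, exactly as in \cite{ALS24, DDL23}).

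\medskip

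The main obstacle I anticipate is Step 2, and within it the ``$u-u_j$ large'' direction: without a priori convergence in capacity one cannot directly apply the comparison principle, so the argument is genuinely a bootstrap — one proves convergence in capacity in the ``easy'' direction from Hartogs, uses it to validate a comparison-principle inequality, then deduces the ``hard'' direction, and finally removes the subsequence. Making the comparison principle work in $\mathcal E(X,\beta)$ on a general compact Hermitian manifold (where neither $\beta\geq0$ nor $dd^c\omega=0$ is assumed) is where the hypothesis that a bounded $\beta$-psh $\rho$ exists is essential, and where the Hermitian torsion terms must be carefully estimated rather than dropped.
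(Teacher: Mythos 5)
Your three-stage outline (full mass, capacity convergence, weak convergence) matches the paper at the skeletal level, but in Steps 1 and 2 you take a genuinely different route from the paper and, as written, both steps have gaps.

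\textbf{Step 1.} There is no off-the-shelf semicontinuity statement of the kind you invoke. The lower semicontinuity result \cite[Theorem 2.6]{DDL23} is for convergence in capacity, not raw $L^1$-convergence, and the domination $\langle(\beta+dd^cu_j)^n\rangle\le\mu$ does not by itself prevent mass from leaking to $-\infty$. The paper's Lemma~\ref{lem: main2} does the work via a quite specific mechanism: form the envelopes $\varphi_j:=P_{\beta+dd^c\rho}(Au_j)$, use the contact-set concentration (Theorem~\ref{thm: put_no_mass4}) and convexity to get $\langle(\beta+dd^c\rho+dd^c\varphi_j)^n\rangle\le A^n\mu$, invoke the Kolodziej--Nguyen $L^1(\mu)$ stability \cite[Corollary 2.2]{KN22} to get $\int_X|e^{u_j}-e^u|\,d\mu\to0$, and finally rule out $\sup_X\varphi_j\to-\infty$ by the non-vanishing Lemma~\ref{lem: non-vanishing}, $\inf_j\int_Xe^v\langle(\beta+dd^c\rho+dd^cu_j)^n\rangle>0$. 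Your proposal has no substitute for Lemma~\ref{lem: non-vanishing}, which is what actually forbids escape to $-\infty$.

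\textbf{Step 2.} Two concrete problems. First, you have the directions swapped: $\{u<u_j-\varepsilon\}=\{u_j>u+\varepsilon\}$ is the Hartogs (easy) direction, not the one requiring a comparison argument; the hard direction is $\{u_j<u-\varepsilon\}$. Second and more substantially, a Kolodziej-type bound $\mathrm{Cap}_\omega(\cdot)\lesssim\varepsilon^{-n}\mu(\cdot)$ via the comparison principle requires the functions involved to be (uniformly) bounded; the paper does prove such estimates (Theorems~\ref{thm: continuity 2}, \ref{thm: cap convergence lemma}) but only in the uniformly bounded regime, and they cannot be applied directly to general $u,u_j\in\mathcal E(X,\beta)$. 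Moreover, the claim that the sets $\{u_j<u-\varepsilon\}$ ``shrink to a pluripolar set'' does not follow from $L^1$-convergence: a subsequence converges Lebesgue-a.e., but a Lebesgue-null set need not be pluripolar, and a non-pluripolar $\mu$ may charge it. The paper's Lemma~\ref{lem: main4} avoids all this via the envelope-of-the-minimum construction $v_{j,k}:=P_{\beta+dd^c\rho}(\min(u_j,\dots,u_k))$, which inherits the domination $\langle(\beta+dd^c\rho+dd^cv_{j,k})^n\rangle\le\mu$ and the weighted $L^1$ smallness $\int_X|e^{v_{j,k}}-e^u|\,\langle(\beta+dd^c\rho+dd^cv_{j,k})^n\rangle\le2^{-j+1}$ (via Corollary~\ref{cor: contactieq} and Lemma~\ref{lem: ineq_3}); passing to the decreasing limit $v_j$, then the increasing limit $v$, the Domination Principle (Theorem~\ref{thm: domi_principal}) forces $v=u$, from which capacity convergence follows by squeezing $v_j\le u_j\le\max(u_j,u)$. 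Your proposal contains no analogue of this construction, which is the actual engine of the argument.

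\textbf{Step 3} is essentially the paper's Theorem~\ref{thm: main3} (which reduces weak convergence to \cite[Theorem 2.6]{DDL23} plus the full-mass identity); your truncate-then-interchange version would also work but is more circuitous.
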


\begin{rem}
The proof of this theorem is primarily inspired by \cite{ALS24}. In that work, the envelope plays a crucial role. Unlike the condition in \cite{ALS24}, where $\theta$ is assumed to be smooth and semi-positive, we only assume that the semi-positivity has a bounded weight function. This assumption is weaker both in terms of regularity and positivity, which introduces new challenges when analyzing the $(\beta + dd^c\rho)$-envelope. To overcome these difficulties, we introduce a new envelope (see Definition \ref{defn: new envelope}) and, building on the results from \cite{GLZ19} and \cite{ALS24}, we prove that this new envelope also possesses desirable properties. For instance, it preserves monotonicity, and its Monge-Amp\`ere mass is concentrated on the contact set  (see for example: Theorem \ref{thm: key}, Theorem \ref{thm: put_no_mass4}, Remark \ref{rem: second_version}).
\end{rem}

As an application of above theorem, we prove the following:
\begin{cor}
\label{cor: main}
Let $(X,\omega)$ be  a compact Hermitian manifold of complex dimension $n$, equipped with a Hermitian metric $\omega$. Let $\beta$ be a smooth closed real $(1,1)$-form with $\int_X\beta^n>0$.  Assume that there is a bounded $\beta$-psh function $\rho$ on $X$. Fix $\lambda > 0$, let $\mu$ be a positive Radon measure vanishing on pluripolar sets which can be writtten as $\mu=f \omega^n$, where $f\in L^{1}(\omega^n)$. Then there exist a unique $u \in \mathcal{E} (X, \beta)$ such that
$$ (\beta+dd^c u)^n = e^{\lambda u} \mu. $$
\end{cor}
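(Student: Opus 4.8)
The plan is to solve the equation $(\beta + dd^c u)^n = e^{\lambda u}\mu$ by the classical variational/continuity method combined with a fixed-point argument, using Theorem \ref{thm: main-1} as the tool to pass to the limit. First I would reduce to a family of non-degenerate auxiliary problems. Since $f \in L^1(\omega^n)$, I approximate $f$ from below by a bounded sequence $f_k := \min(f,k)$, so that $\mu_k := f_k\omega^n$ is a finite measure with bounded density and $\mu_k \uparrow \mu$. For each $k$, the existence of a (bounded) solution $u_k \in \mathcal E(X,\beta)$ of $(\beta + dd^c u_k)^n = e^{\lambda u_k}\mu_k$ should follow from the known solvability theory for the Monge-Amp\`ere equation with $L^\infty$ (or more generally $L^p$, $p>1$) density in the Hermitian degenerate setting — this is where the hypothesis that $\beta$ admits a bounded potential $\rho$ and $\int_X \beta^n > 0$ enters, through the results of \cite{LWZ24a} and the machinery built in the earlier sections. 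The exponential nonlinearity $e^{\lambda u}$ with $\lambda > 0$ is favorable: it makes the problem monotone, which both gives uniqueness directly (by the comparison principle: if $u, u'$ are two solutions, comparing at the maximum of $u - u'$ forces $u \le u'$ and vice versa) and provides automatic a priori bounds.

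Second, I would establish uniform estimates on $\{u_k\}$. An upper bound: since $e^{\lambda u_k}\mu_k \le e^{\lambda u_k}\mu$ and $\int_X e^{\lambda u_k}\mu_k = \int_X(\beta+dd^cu_k)^n = \int_X\beta^n =: V$, the solutions cannot drift to $+\infty$; more precisely, normalizing, one shows $\sup_X u_k \le C$ for a constant independent of $k$ (if $\sup u_k \to +\infty$ then $\int e^{\lambda u_k}\mu_k \to \infty$ along a subsequence after using that $u_k$ cannot be too negative on a set of positive measure, a standard compactness argument for $\beta$-psh functions with the bounded potential $\rho$). A lower/monotonicity observation: because $\mu_k$ is increasing in $k$, the comparison principle gives that $u_k$ is increasing in $k$ (if $\mu_k \le \mu_{k+1}$ then $(\beta+dd^cu_k)^n = e^{\lambda u_k}\mu_k$ forces $u_k \le u_{k+1}$ by comparing solutions of ordered data). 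Hence $u := \lim_{k\to\infty} u_k = \sup_k u_k$ exists, is $\beta$-psh (its usc regularization differs only on a pluripolar set), and $u \le C$. The total mass control $\int_X (\beta+dd^cu_k)^n = V$ together with $u_k \in \mathcal E(X,\beta)$ and $u_k \uparrow u$ should give, via monotonicity of non-pluripolar masses, that $u \in \mathcal E(X,\beta)$ as well, and that $u_k \to u$ in $L^1(X)$.

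Third, I would pass to the limit in the equation. We have $(\beta + dd^cu_k)^n = e^{\lambda u_k}\mu_k \le e^{C}\mu$, so the hypotheses of Theorem \ref{thm: main-1} are met with the fixed non-pluripolar Radon measure $\tilde\mu := e^C\mu$: from $u_k \to u$ in $L^1(X)$ we conclude $u_k \to u$ in capacity and $\langle(\beta+dd^cu_k)^n\rangle \to \langle(\beta+dd^cu)^n\rangle$ weakly. On the other side, $e^{\lambda u_k}\mu_k \to e^{\lambda u}\mu$ weakly: convergence in capacity of $u_k$ to $u$ forces $u_k \to u$ in measure (for $\mu$, which charges no pluripolar set), and $f_k \uparrow f$ in $L^1$, so by a dominated/monotone convergence argument ($0 \le e^{\lambda u_k} \le e^C$) the measures converge. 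Equating the two weak limits yields $\langle(\beta+dd^cu)^n\rangle = e^{\lambda u}\mu$; since $u \in \mathcal E(X,\beta)$ the non-pluripolar operator coincides with $(\beta+dd^cu)^n$, giving existence. Uniqueness: if $u, u'$ both solve it in $\mathcal E(X,\beta)$, apply the comparison principle for the non-pluripolar Monge-Amp\`ere operator in $\mathcal E(X,\beta)$ (available in this setting from \cite{LWZ24a} and the earlier sections) on the set $\{u > u'\}$: there $e^{\lambda u}\mu = \langle(\beta+dd^cu)^n\rangle \ge \langle(\beta+dd^cu')^n\rangle|_{\{u>u'\}}$ is not compatible with $e^{\lambda u} > e^{\lambda u'}$ unless $\{u>u'\}$ is $\mu$-null, i.e. $u \le u'$ $\mu$-a.e., and symmetrically; a domination principle then upgrades this to $u = u'$ everywhere.

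The main obstacle I anticipate is the first reduction step — producing the bounded solutions $u_k$ for the bounded-density problems and, in tandem, verifying the comparison/domination principle in the class $\mathcal E(X,\beta)$ under the weak hypotheses here (only a \emph{bounded} potential $\rho$, $\beta$ merely closed and not semipositive). In the semipositive smooth case this is classical, but here one must lean on the new envelope of Definition \ref{defn: new envelope} and Theorems \ref{thm: key} and \ref{thm: put_no_mass4} to run the envelope-based existence and stability arguments; ensuring these adapt cleanly — in particular that the total mass $\int_X\beta^n$ is correctly preserved along the approximation and that no mass escapes to pluripolar sets — is the delicate point. The rest (a priori $\sup$-bound, monotonicity in $k$, the final weak limit) is, by contrast, fairly mechanical once Theorem \ref{thm: main-1} is in hand.
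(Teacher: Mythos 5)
Your overall strategy — truncate $f_k = \min(f,k)$, solve the bounded-density problems via \cite{LWZ24a}, and pass to the limit with Theorem \ref{thm: main-1} — is exactly the paper's approach, and your third step (identifying the two weak limits) and the uniqueness argument are fine. There is, however, a genuine error in the monotonicity claim in Step 2, and it is not cosmetic.

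You assert that because $\mu_k \uparrow \mu$ the comparison principle gives $u_k$ \emph{increasing} in $k$, and you then take $u = \sup_k u_k$. The monotonicity is in fact the opposite. Writing the equations in the form used by Corollary \ref{cor: comparison1}, we have $e^{-\lambda u_k}(\beta + dd^c u_k)^n = \mu_k \leq \mu_{k+1} = e^{-\lambda u_{k+1}}(\beta + dd^c u_{k+1})^n$, and that corollary concludes $u_k \geq u_{k+1}$: a larger density forces a \emph{smaller} solution (the exponential $e^{\lambda u}$ must shrink to compensate). So the sequence $(u_k)$ is decreasing, and the worry is not that it drifts up but that it collapses to $-\infty$. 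Your argument provides no defense against this, because you thought you were taking a supremum of a bounded-above family. The paper closes this gap by integrating the equation: if $\sup_X u_j \to -\infty$ then $\int_X \beta^n = \int_X e^{\lambda u_j}\min(f,j)\,\omega^n \leq e^{\lambda \sup_X u_j}\mu(X) \to 0$, contradicting $\int_X \beta^n > 0$; hence $(u_j)$ does not converge uniformly to $-\infty$ and the decreasing limit is a genuine $\beta$-psh function. Relatedly, the uniform domination you invoke to apply Theorem \ref{thm: main-1} — $(\beta + dd^c u_k)^n \leq e^{C}\mu$ with $C$ a uniform upper bound on $u_k$ — is naturally obtained in the paper from the \emph{decreasing} monotonicity, via $(\beta + dd^c u_j)^n = e^{\lambda u_j}\min(f,j)\omega^n \leq e^{\lambda u_1}\mu$ and the boundedness of $u_1$; your sketch of the upper bound hand-waves a normalization that is unnecessary once the correct direction is in place. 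With these two corrections your proof coincides with the paper's.
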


\begin{rem}
If $f\in L^p(X,\omega)$ with $p>1$, the authors in \cite{LWZ24a} established the existence and uniqueness of bounded solutions to the above Monge-Amp\`ere equations. For more general results, we refer the reader to \cite{LWZ24b, LLZ24}.
\end{rem}

The $L^{\infty}$ estimate has a long history in the study of Monge-Amp\`ere equations. In the solution of the Calabi conjecture \cite{Yau78}, Yau introduced the Moser iteration, which is a key step in obtaining the $L^{\infty}$ estimate. This method works for equations with the right-hand side in $L^p$, where $p > n$ and $n$ is the dimension of the underlying compact K\"ahler manifold. 
Kolodziej \cite{Ko98} introduced a new method for the $L^{\infty}$ estimate using pluripotential techniques and integration by parts on compact K\"ahler manifolds. This method applies to $L^p$ densities, where $p > 1$. Kolodziej's approach has been further generalized to handle less positive or collapsing families of cohomology classes on K\"ahler manifolds; see, for example, \cite{EGZ09, EGZ08, DP10, BEGZ10}. There are also generalizations to compact Hermitian manifolds; see, for instance, \cite{LWZ24a} and the references therein.
Recently, Guo, Phong, and Tong \cite{GPT23} used a PDE approach to obtain a priori $L^{\infty}$ estimates for Monge-Amp\`ere equations.


In \cite{GL21, GL22, GL23}, Guedj and Lu introduced a new method to give $L^{\infty}$ estimates for some degenerate Monge-Amp\`ere equations on compact hermitian manifolds. Their method only relies on compactness and envelope properties of quasi-plurisubharmonic functions. We follow this approach to obtain a priori $L^\infty$-estimates for the solution to the degenerate complex Monge-Amp\`ere equation for a finite positive Radon measure.
\begin{thm}
\label{thm: linf est}
Let $(X,\omega)$ be  a compact Hermitian manifold of complex dimension $n$, equipped with a Hermitian metric $\omega$. Let $\beta$ be a smooth closed real $(1,1)$-form with $\int_X\beta^n>0$.  Assume that there is a bounded $\beta$-psh function $\rho$ on $X$.  Let $\mu$ be a finite positive Radon measure on X such that $PSH(X,\beta+dd^c\rho)\subset L^m(\mu)$ for some $m>n$ and $\mu(X)=\int_X\beta^n$. Then any solution $\varphi\in  PSH(X,\beta+dd^c\rho)\cap L^{\infty}(X)$ to $(\beta+dd^c(\rho+\varphi))^n=\mu$, satisfies
$$
Osc_X(\varphi)\leq T
$$
for some uniform constant T which only depends on $X,\beta$ and
$$
A_m(\mu):=sup\left\{(\int_X(-\psi)^md\mu)^{\frac{1}{m}}: \psi\in PSH(X,\beta_{\rho})\,with\,\underset{X}{sup}\,\psi=0\right\}.
$$
\end{thm}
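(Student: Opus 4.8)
The plan is to follow the Guedj--Lu compactness-and-envelope scheme adapted to the class $\mathrm{PSH}(X,\beta+dd^c\rho)$, which is the right substitute here since $\rho$ is only bounded. Write $\beta_\rho := \beta+dd^c\rho$ as a positive current with bounded potential, and set $\psi = \varphi - \sup_X\varphi$ so that $\sup_X\psi = 0$; since $\mathrm{Osc}_X(\varphi) = -\inf_X\psi$, it suffices to bound $-\inf_X\psi$ from above by a constant depending only on $X$, $\beta$ and $A_m(\mu)$. The overall strategy is a proof by contradiction via a compactness argument: if no such uniform $T$ exists, one obtains a sequence $\varphi_k\in \mathrm{PSH}(X,\beta_\rho)\cap L^\infty(X)$ solving $(\beta_\rho + dd^c\varphi_k)^n = \mu_k$ with $\mu_k(X) = \int_X\beta^n$, $A_m(\mu_k)$ bounded, $\sup_X\varphi_k = 0$, but $\inf_X\varphi_k \to -\infty$. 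By compactness of $\mathrm{PSH}(X,\beta_\rho)$ (normalized by $\sup = 0$) in $L^1$, a subsequence converges in $L^1$ to some $\psi_\infty\in \mathrm{PSH}(X,\beta_\rho)$; the contradiction will come from showing the $\varphi_k$ are in fact uniformly bounded below, contradicting $\inf_X\varphi_k\to-\infty$.

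The heart of the argument is the quantitative envelope estimate. For a Borel set or, more precisely, for the sublevel sets $\{\varphi_k < -s\}$, introduce the auxiliary envelope
\[
u_{k,s} := \sup\left\{ v \in \mathrm{PSH}(X,\beta_\rho)\,:\, v \le 0,\ v \le \varphi_k + s \text{ on } \{\varphi_k < -s\} \right\}
\]
(or the new envelope of Definition \ref{defn: new envelope} applied to $\beta_\rho$), and use the key envelope properties established earlier in the paper --- that such an envelope again lies in $\mathrm{PSH}(X,\beta_\rho)$, is bounded, and that its Monge--Amp\`ere measure is concentrated on the contact set where $u_{k,s}$ meets the obstacle (Theorem \ref{thm: key}, Theorem \ref{thm: put_no_mass4}, Remark \ref{rem: second_version}). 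On the contact set one has $(\beta_\rho + dd^c u_{k,s})^n \le (\beta_\rho + dd^c\varphi_k)^n = \mu$, so
\[
\int_X (\beta_\rho + dd^c u_{k,s})^n \le \mu\big(\{\varphi_k < -s\}\big).
\]
Combining this with a comparison/domination estimate relating $\sup_X(-u_{k,s})$ to $\big(\int_X(\beta_\rho + dd^c u_{k,s})^n\big)^{1/n}$ --- the kind of inequality that underlies Ko\l odziej-type bounds --- and with the Chebyshev-type bound $\mu(\{\varphi_k<-s\}) \le A_m(\mu)^m \, s^{-m}$ coming directly from the definition of $A_m(\mu)$ (since $-\varphi_k \ge s$ there and $\varphi_k\in\mathrm{PSH}(X,\beta_\rho)$ with $\sup = 0$), one gets a decay estimate of the form: there exist $a > 0$ and a function $g(s)$ with $\int_0^\infty g(s)\,ds < \infty$ (a consequence of $m > n$) such that the distribution function $s\mapsto \text{(capacity or volume of }\{\varphi_k < -s\})$ decays fast enough. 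A De Giorgi / bootstrap iteration on the sequence of sublevels $s_j$ then forces $\{\varphi_k < -s_\infty\}$ to be empty for a uniform $s_\infty$ depending only on $X$, $\beta$, $A_m(\mu)$, giving $\inf_X\varphi_k \ge -s_\infty$ and the desired contradiction.

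An alternative, more self-contained route avoiding the contradiction scheme: run the Guedj--Lu iteration directly. Define $f(s) = \big(\mathrm{Cap}_\omega(\{\varphi < -s\})\big)^{1/n}$ or the volume analogue, and show via the envelope estimate above that $f$ satisfies a functional inequality $t\,f(s+t) \le C\,\big(\mu(\{\varphi<-s\})\big)^{1/n} \le C\,A_m(\mu)\,s^{-m/n}$ for all $s > 0$, $t > 0$; since $m/n > 1$, a standard lemma (e.g. \cite[Lemma]{GL21} or the De Giorgi-type iteration lemma) upgrades this to $f(s) = 0$ for all $s \ge s_\infty$ with $s_\infty$ controlled by $C$ and $A_m(\mu)$, hence $\mathrm{Osc}_X(\varphi) \le s_\infty =: T$. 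The constant $C$ collects only data from $X$, $\omega$ and $\beta$ (through the envelope comparison constant and the fixed reference potential $\rho$, whose $L^\infty$-norm is absorbed into the geometry), so $T$ has the claimed dependence.

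I expect the main obstacle to be the envelope comparison step in the degenerate, low-regularity setting: proving that $\sup_X(-u_{k,s})$ is controlled by $\big(\int_X (\beta_\rho+dd^c u_{k,s})^n\big)^{1/n}$ up to a geometric constant, while $\beta_\rho$ has only a bounded (not smooth) potential and is not assumed K\"ahler. This is precisely where the new envelope of Definition \ref{defn: new envelope} and its mass-concentration property (Theorem \ref{thm: put_no_mass4}, Remark \ref{rem: second_version}) must be deployed carefully, together with the fact --- guaranteed by the hypothesis $\int_X\beta^n > 0$ and the existence of the bounded potential $\rho$ --- that $\mathrm{PSH}(X,\beta_\rho)$ is a nonempty, compact (modulo normalization) family, so that all the qualitative inputs of the Guedj--Lu machinery remain available. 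Handling the Hermitian (non-closed $\omega$) error terms in the integration-by-parts/comparison estimates, absorbing them into the constant $C$, is the remaining technical point.
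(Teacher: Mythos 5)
Your proposal and the paper's proof are genuinely different in strategy. The paper follows the Guedj--Lu ``quasi-psh envelopes'' scheme of \cite{GL21,GL22}: it introduces an auxiliary \emph{concave weight} $\chi:\mathbb{R}^-\to\mathbb{R}^-$, considers the envelope $u=P_{\beta+dd^c\rho}(\chi\circ\varphi)$, proves (Lemma \ref{lem: concave weight}) that
$(\beta_\rho+dd^c u)^n\le \ind{\{u=\chi\circ\varphi\}}(\chi'\circ\varphi)^n(\beta_\rho+dd^c\varphi)^n$,
then \emph{chooses $\chi$ adaptively} (through $g(x)=1+\int_0^x f$, $f(t)\sim\big((1+t)^2\mu(\varphi<-t)\big)^{-1}$) so that $\int_X(\chi'\circ\varphi)^{n+2\epsilon}d\mu\le 2$, derives an energy bound for $u$ from $A_m(\mu)$, and finally closes the argument through an ODE inequality for $h=-\chi(-\cdot)$. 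There is no capacity comparison and no De Giorgi iteration; the whole point of \cite{GL21} is to replace that machinery by the envelope/weight/ODE route. You instead propose either a contradiction-by-compactness argument or a Ko\l odziej-type capacity iteration.

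Your second route, as written, has a concrete gap. The Chebyshev bound $\mu(\{\varphi<-s\})\le A_m(\mu)^m s^{-m}$ applied directly to the solution $\varphi$ only yields a \emph{decay} estimate: from $t\,f(s+t)\le C\big(\mu(\{\varphi<-s\})\big)^{1/n}\le C A_m(\mu) s^{-m/n}$ one may optimize $t$ to get $f(s)\lesssim s^{-1-m/n}$, but nothing forces $f$ to vanish at a finite threshold. The De Giorgi/Ko\l odziej lemma needs a \emph{self-improving} inequality of the form $t\,f(s+t)\le C f(s)^{1+\alpha}$ with $\alpha>0$; that requires controlling $\mu$ of the sublevel set by a power of its \emph{capacity} (equivalently $\mu(E)\lesssim\mathrm{Cap}(E)^{m/n}$), which in turn demands applying the $L^m(\mu)$ hypothesis not to $\varphi$ but to the relative extremal functions of the sublevel sets. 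You never make this step, so the iteration does not close. (Your first route has a related issue: a small total mass $\int_X(\beta_\rho+dd^c u_{k,s})^n$ does not by itself control $\sup_X(-u_{k,s})$; some capacity or energy input is again needed.) The paper's concave-weight device is precisely engineered to sidestep this: the self-referential choice of $\chi$ plays the role that the capacity-to-measure domination would otherwise play, and the ODE for $h$ replaces the iterative lemma.

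So: different approach from the paper, and as sketched it does not yet yield a proof --- the missing ingredient is the conversion of the $L^m(\mu)$ hypothesis into a genuine $\mathrm{Cap}$-domination of $\mu$ (or, alternatively, adopting the concave-weight/ODE scheme the paper actually uses).
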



\subsection*{Acknowledgements}  This research is supported by National Key R \& D Program of China (No. 2021YFA1002600)  and by NSFC grant (No. 12071035). The third author is partially supported by  the Fundamental Research Funds for the Central Universities.

\section{Preliminaries}

	Let $X$  be a compact Hermitian manifold of complex  dimension $n$ equipped  with a Hermitian metric $\omega$.
Let $\beta$ be a  closed smooth real $(1,1)$-form with $\int_X\beta^n>0$.
Suppose  there exists a function  $ \rho\in \mbox{PSH}(X,\beta)\cap L^{\infty}(X)$.
\subsection{Non-pluripolar product}
In this subsection, we collect several fundamental facts and properties concerning the non-pluripolar product. The non-pluripolar product is a crucial tool in the study of complex Monge-Amp\`ere equations and plays a significant role in understanding the behavior of plurisubharmonic (psh) functions and their associated measures.

\begin{lem}\label{pl_prop_}
Let $u, v \in PSH(X, \beta+dd^c \rho) \cap L^{\infty} (X)$, then  $$ \ind{\{\{u>v\}} (\beta+ dd^c\rho + dd^c u)^n = \ind{\{u>v\}} (\beta+ dd^c \rho + dd^c \mathrm{max} (u, v) )^n.$$
\end{lem}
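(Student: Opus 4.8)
The plan is to reduce this to a purely local statement about bounded plurisubharmonic functions, where it is the classical ``locality of the Monge--Amp\`ere operator on plurisubharmonic functions'' that goes back to Bedford--Taylor. First I would fix a coordinate chart $U$ on which $\beta + dd^c\rho = dd^c g$ for some bounded psh function $g$ (here I use that $\rho$ is bounded $\beta$-psh, so locally $g$ can be taken bounded psh; shrinking $U$ if necessary). Then on $U$ we have $\beta + dd^c\rho + dd^c u = dd^c(g+u)$ and similarly with $v$ and with $\max(u,v)$, and all three of $g+u$, $g+v$, $g+\max(u,v) = \max(g+u, g+v)$ are bounded psh functions on $U$. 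So the claimed identity on $U$ becomes
\[
\ind{\{g+u > g+v\}}\,(dd^c(g+u))^n = \ind{\{g+u > g+v\}}\,(dd^c\max(g+u,g+v))^n,
\]
using that $\{u>v\} = \{g+u>g+v\}$ since $g$ is finite-valued. Since the Monge--Amp\`ere measure of bounded psh functions is a well-defined positive Radon measure that does not charge pluripolar sets, and the non-pluripolar product of bounded functions agrees with the usual Bedford--Taylor product, this local identity is exactly \cite[Corollary~2.5 (locality)]{BT87} or the statement in \cite[Corollary~3.2.4]{GZ17}: two bounded psh functions that agree on an open set have the same Monge--Amp\`ere measure there — applied to the open set $\{g+u>g+v\}$, on which $g+u$ and $\max(g+u,g+v)$ coincide.

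The one point that needs care — and the main (minor) obstacle — is the passage between the \emph{global} non-pluripolar product $(\beta+dd^c\rho+dd^c u)^n$ appearing in the statement and the \emph{local} Bedford--Taylor product $(dd^c(g+u))^n$ on $U$. For bounded potentials these coincide, because the non-pluripolar product restricted to the plurifine-open locus where all potentials are bounded is built precisely by gluing the local Bedford--Taylor products; this is recorded in the preliminaries on the non-pluripolar product (following \cite{BEGZ10, LWZ24a}) cited in the excerpt. Concretely, one writes $X$ as a finite union of such charts $U_\alpha$, applies the local identity on each $\{u>v\}\cap U_\alpha$, and checks the pieces are consistent on overlaps — which is automatic since on overlaps both sides are the same local Bedford--Taylor measure. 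Because both sides of the asserted equality are measures carried by the Borel set $\{u>v\}$ and vanish on pluripolar sets, and $g$ is locally bounded hence finite off a set already negligible for these measures, no subtlety about the pluripolar ``boundary'' $\{u=v\}$ intervenes.

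Thus the proof is: (1) localize, replacing $\beta+dd^c\rho$ by a local bounded psh potential $g$; (2) observe $\max(u,v)$ corresponds to $\max(g+u,g+v)$ and that the sets $\{u>v\}$ match; (3) invoke the Bedford--Taylor locality principle for bounded psh functions on the open set where $g+u$ exceeds $g+v$; (4) glue over a finite cover of $X$, using that the non-pluripolar product of bounded-weight currents is the gluing of the local products and that everything is insensitive to pluripolar sets. I expect step (4)'s bookkeeping — matching the global non-pluripolar product with the local products — to be the only place requiring an explicit reference back to the definition of $\langle(\beta+dd^c\cdot)^n\rangle$ in the preliminaries, and it is routine.
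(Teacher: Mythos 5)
The overall structure of your argument — choose a finite cover $\{\Omega_i\}$ on which $\beta+dd^c\rho = dd^c\eta_i$ with $\eta_i$ bounded psh, note that $\eta_i+u$, $\eta_i+v$, $\eta_i+\max(u,v)=\max(\eta_i+u,\eta_i+v)$ are bounded psh on $\Omega_i$, and reduce to the local Bedford--Taylor identity — is exactly what the paper does, and the gluing step is as routine as you say.

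The gap is at step (3). You invoke ``two bounded psh functions that agree on an open set have the same Monge--Amp\`ere measure there'' and assert that $\{g+u>g+v\}$ is an open set. It is not: $g+u$ and $g+v$ are only upper semicontinuous, so their difference is a difference of u.s.c.\ functions, and the superlevel set $\{g+u-(g+v)>0\}$ is in general only $F_\sigma$, not Euclidean-open. (It \emph{is} plurifine open, since $g+u-(g+v)$ is the difference of a psh function and a psh function, hence plurifine continuous.) Consequently the open-set locality principle you cite does not apply directly. What is actually needed at the local level is precisely the plurifine version: for bounded psh $u_0,v_0$ on a domain,
\[
\ind{\{u_0>v_0\}}(dd^c u_0)^n = \ind{\{u_0>v_0\}}(dd^c\max(u_0,v_0))^n,
\]
which is \cite[Corollaries 4.2, 4.3]{BT87} (the result the paper cites), proved there via the plurifine topology. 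Replacing your open-set locality citation with that result closes the gap; the localization and gluing you describe are then correct and essentially identical to the paper's proof.
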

\begin{proof}
Choose $\{\Omega_i \}_{i=1}^n$ an open cover of $X$ such that  there exist $\eta_i\in PSH(\Omega_i) \cap L^{\infty}(\Omega_i)$ satisfying $\beta+dd^c \rho = dd^c \eta_i$ on $\Omega_i$. On $\{u>v \}\cap \Omega_i = \{u+\eta_i > v + \eta_i\}\cap \Omega_i$, since $u+\eta_i = \max (u+\eta_i, v+\eta_i) $, by \cite[Corollary 4.2, Corollary 4.3]{BT87}, we have
$$ \ind{ \{u>v \}\cap \Omega_i} (dd^c(\eta_i + u))^n = \ind{ \{u>v \}\cap \Omega_i} (dd^c\max(\eta_i + u, \eta_i + v))^n.$$
Thus $$\ind{ \{u>v \}\cap \Omega_i} (\beta+dd^c \rho +dd^c u)^n = \ind{ \{u>v \}\cap \Omega_i} (\beta+dd^c \rho + dd^c\max(u, v))^n,$$ the lemma follows.
\end{proof}

\begin{cor}\label{pl_prop}
$u, v \in PSH(X, \beta) \cap L^{\infty}(X)$, then
$$ \ind{\{u>v\}} (\beta + dd^c u)^n = \ind{\{u>v\}} (\beta + dd^c \mathrm{max} (u, v) )^n.$$
\end{cor}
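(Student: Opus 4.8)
The plan is to deduce this directly from Lemma \ref{pl_prop_} by translating by the bounded weight $\rho$. Given $u,v\in PSH(X,\beta)\cap L^\infty(X)$, set $\tilde u:=u-\rho$ and $\tilde v:=v-\rho$. Since $\rho+\tilde u=u\in PSH(X,\beta)$, by the definition of $PSH(X,\beta+dd^c\rho)$ we have $\tilde u\in PSH(X,\beta+dd^c\rho)$, and because $\rho\in L^\infty(X)$ and $u\in L^\infty(X)$ we get $\tilde u\in L^\infty(X)$ as well; the same applies to $\tilde v$. Hence Lemma \ref{pl_prop_} is applicable to the pair $\tilde u,\tilde v$ and yields
$$\ind{\{\tilde u>\tilde v\}}(\beta+dd^c\rho+dd^c\tilde u)^n=\ind{\{\tilde u>\tilde v\}}(\beta+dd^c\rho+dd^c\max(\tilde u,\tilde v))^n.$$

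It then remains to rewrite both sides in terms of $u$ and $v$. Subtracting the same function $\rho$ from $u$ and $v$ does not affect the comparison, so $\{\tilde u>\tilde v\}=\{u>v\}$. By linearity of $dd^c$ one has $\beta+dd^c\rho+dd^c\tilde u=\beta+dd^c u$, and since $\max(\tilde u,\tilde v)=\max(u,v)-\rho$ one likewise obtains $\beta+dd^c\rho+dd^c\max(\tilde u,\tilde v)=\beta+dd^c\max(u,v)$, where $\max(u,v)\in PSH(X,\beta)\cap L^\infty(X)$ so that every term is well defined. Substituting these identities into the displayed equality produces exactly the assertion of the corollary.

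There is no genuine obstacle in this argument: the only points to check are that $u-\rho$ and $v-\rho$ are bounded — immediate from the boundedness of $\rho$ — and that the bounded quasi-psh functions involved admit the relevant $dd^c$ operations and non-pluripolar products, which is standard (cf. \cite{BT87}). In short, the corollary is just the $\rho$-translated form of Lemma \ref{pl_prop_}.
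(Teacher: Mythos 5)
Your proof is correct and is essentially identical to the paper's one-line argument: substitute $\tilde u=u-\rho$, $\tilde v=v-\rho\in PSH(X,\beta+dd^c\rho)\cap L^\infty(X)$ into Lemma~\ref{pl_prop_} and note that the $dd^c\rho$ terms cancel so both sides return to the $\beta$-form of the statement. You have simply written out the bookkeeping (boundedness of $\tilde u,\tilde v$, the identity $\{\tilde u>\tilde v\}=\{u>v\}$, and $\max(\tilde u,\tilde v)=\max(u,v)-\rho$) that the paper leaves implicit.
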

\begin{proof}
Since $u-\rho, v-\rho \in PSH(X, \beta+dd^c \rho)$, applying Lemma \ref{pl_prop_}, we get the result.
\end{proof}

\begin{lem}\label{plurifine locality_1}
Let $O$ be an plurifine open set, if $u,v \in PSH(X, \beta+dd^c \rho) \cap L^{\infty} (X)$ and $u=v$ on $O$, then
$$\ind{O} (\beta+dd^c \rho +dd^c u)^n = \ind{O} (\beta+dd^c \rho +dd^c v)^n. $$
\end{lem}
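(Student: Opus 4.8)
The plan is to reduce the statement to the classical plurifine locality of the complex Monge--Amp\`ere operator in the Euclidean setting, following the same localization as in the proof of Lemma~\ref{pl_prop_}. Write $\Theta := \beta + dd^c\rho$ and fix the open cover $\{\Omega_i\}$ of $X$ together with bounded psh potentials $\eta_i$ on $\Omega_i$ satisfying $\Theta = dd^c\eta_i$ on $\Omega_i$. Since $X = \bigcup_i \Omega_i$, every Borel subset of $O$ can be written as a disjoint union of Borel sets, each contained in some $\Omega_i$ (take $\Omega_i' := \Omega_i\setminus\bigcup_{j<i}\Omega_j$); hence it suffices to prove, for each fixed $i$, the identity of measures
\[
\ind{O\cap\Omega_i}\,(\Theta + dd^c u)^n \;=\; \ind{O\cap\Omega_i}\,(\Theta + dd^c v)^n .
\]
Because $\Theta = dd^c\eta_i$ on $\Omega_i$, this is equivalent to $\ind{O\cap\Omega_i}(dd^c(\eta_i + u))^n = \ind{O\cap\Omega_i}(dd^c(\eta_i + v))^n$, where now $\eta_i + u$ and $\eta_i + v$ are \emph{bounded} psh functions on the domain $\Omega_i$.

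Next I would verify the two facts needed to apply the local result: that $\eta_i + u = \eta_i + v$ on $O\cap\Omega_i$, and that $O\cap\Omega_i$ is plurifine open in $\Omega_i$. The first is immediate from $u = v$ on $O$. For the second, every psh function defined on an open subset of $X$ restricts to a psh function on the corresponding open subset of $\Omega_i$, so the trace on $\Omega_i$ of the plurifine topology of $X$ is contained in the plurifine topology of $\Omega_i$; since $O$ is plurifine open in $X$ and $\Omega_i$ is (Euclidean, hence plurifine) open in $X$, the set $O\cap\Omega_i$ is plurifine open in $\Omega_i$. With these two points in hand, the desired local identity is exactly the statement that the operator $(dd^c\,\cdot\,)^n$ acting on bounded psh functions is local with respect to the plurifine topology, which is the theorem of Bedford--Taylor \cite{BT87} (see also \cite{GZ17}). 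If one prefers not to quote it directly, it can be bootstrapped from the comparison-type identities of \cite[Corollary 4.2, Corollary 4.3]{BT87} by a Dynkin-system argument over a base of the plurifine topology, using crucially that $(dd^c\,\cdot\,)^n$ charges no pluripolar set; but invoking the Bedford--Taylor plurifine locality directly is cleanest.

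Assembling the pieces, for each $i$ we obtain $\ind{O\cap\Omega_i}(dd^c(\eta_i+u))^n = \ind{O\cap\Omega_i}(dd^c(\eta_i+v))^n$, hence $\ind{O\cap\Omega_i}(\Theta+dd^cu)^n = \ind{O\cap\Omega_i}(\Theta+dd^cv)^n$, and summing over the disjoint Borel refinement $\{\Omega_i'\}$ of the cover yields $\ind{O}(\Theta+dd^cu)^n = \ind{O}(\Theta+dd^cv)^n$, which is the assertion. The only genuinely nontrivial ingredient is the Euclidean plurifine locality itself; the localization and the measure-theoretic gluing over the cover are routine, the one point deserving care being the compatibility of the plurifine topology with restriction to the coordinate patches $\Omega_i$.
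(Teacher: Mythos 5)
Your proof is correct and follows essentially the same route as the paper's: localize via bounded local potentials $\eta_i$ for $\beta + dd^c\rho$ on a coordinate cover, reduce to the Euclidean plurifine locality of $(dd^c\cdot)^n$ for bounded psh functions via \cite[Corollary 4.2, Corollary 4.3]{BT87}, and glue over the cover. You spell out two points the paper's terse proof leaves implicit — that $O\cap\Omega_i$ is plurifine open in $\Omega_i$, and the disjoint Borel refinement used to assemble the local identities — but the underlying argument is the same.
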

\begin{proof}
Follow Lemma \ref{pl_prop_}'s proof,  Choose $\{\Omega_i \}_{i=1}^n$ an open cover of $X$ such that  there exist $\eta_i\in PSH(\Omega_i)\cap L^{\infty}(\Omega_i)$ satisfying $\beta+dd^c \rho = dd^c \eta_i$ on $\Omega_i$. By \cite[Corollary 4.2, Corollary 4.3]{BT87}, we have $$\ind{O\cap \Omega_i } (dd^c(\eta_i + u))^n =\ind{O\cap \Omega_i } (dd^c(\eta_i + v))^n,$$
thus the lemma follows.
\end{proof}

\begin{cor}\label{cor: plurifine locality_2}
$u, v \in PSH(X, \beta) \cap L^{\infty}(X)$, $O$ is an plurifine open set, $u=v$ on $O$, then
$$ \ind{O} (\beta + dd^c u)^n = \ind{O} (\beta + dd^c v )^n.$$
\end{cor}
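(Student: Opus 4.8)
The plan is to reduce Corollary \ref{cor: plurifine locality_2} to Lemma \ref{plurifine locality_1} by the same change of potential used to deduce Corollary \ref{pl_prop} from Lemma \ref{pl_prop_}. First I would note that since $\rho\in\PSH{X}{\beta}\cap L^\infty(X)$, for any $u\in\PSH{X}{\beta}\cap L^\infty(X)$ the function $u-\rho$ satisfies $\beta+dd^c\rho+dd^c(u-\rho)=\beta+dd^cu\geq 0$, so $u-\rho\in\PSH{X}{\beta+dd^c\rho}\cap L^\infty(X)$; likewise $v-\rho\in\PSH{X}{\beta+dd^c\rho}\cap L^\infty(X)$. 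Moreover $u=v$ on the plurifine open set $O$ immediately gives $u-\rho=v-\rho$ on $O$, so the pair $(u-\rho,v-\rho)$ satisfies all the hypotheses of Lemma \ref{plurifine locality_1}.

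Next I would apply Lemma \ref{plurifine locality_1} to $u-\rho$ and $v-\rho$ on $O$, obtaining
$$\ind{O}\,(\beta+dd^c\rho+dd^c(u-\rho))^n=\ind{O}\,(\beta+dd^c\rho+dd^c(v-\rho))^n.$$
Finally I would simplify both sides using $\beta+dd^c\rho+dd^c(u-\rho)=\beta+dd^cu$ and $\beta+dd^c\rho+dd^c(v-\rho)=\beta+dd^cv$, which yields
$$\ind{O}\,(\beta+dd^cu)^n=\ind{O}\,(\beta+dd^cv)^n,$$
as desired. There is no real obstacle here: the only point that needs a word of care is that the non-pluripolar products $(\beta+dd^cu)^n$, $(\beta+dd^cv)^n$ are well defined for bounded $\beta$-psh functions (indeed they agree with the usual Bedford--Taylor products locally, via the potentials $\eta_i$), and that adding the fixed bounded potential $\rho$ does not disturb the plurifine-locality statement — both of which are already built into Lemma \ref{plurifine locality_1} and its proof. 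Hence the corollary follows immediately.
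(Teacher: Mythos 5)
Your proof is correct and is essentially identical to the paper's: the paper reduces Corollary \ref{cor: plurifine locality_2} to Lemma \ref{plurifine locality_1} by passing to $u-\rho,\,v-\rho\in\PSH{X}{\beta+dd^c\rho}\cap L^\infty(X)$, exactly as you do (it simply records this as ``the same as the proof of Corollary \ref{pl_prop}''). Nothing is missing.
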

\begin{proof}
The same as the proof of Corollary \ref{pl_prop}.
\end{proof}

For $\varphi \in PSH(X, \beta)$, by Corollary \ref{pl_prop}, we have $\{ \ind{\{u>\rho-k\}}(\beta+dd^c \max (u, \rho-k))^n \}_{k\in \mathbb{N}}$ is a non-decreasing measure sequence.
From integration by part, we also have
\begin{align*}
&\sup_{k} \int_{K\cap\{ \varphi>\rho-k \}} \left[\beta+dd^c \max(\varphi, \rho-k) \right]^n\\
\leq  &\sup_{k} \int_X \left[\beta+dd^c \max(\varphi, \rho-k) \right]^n\\
=&\int_{X} \beta^n < +\infty,
\end{align*}
thus $\{ \ind{\{u>\rho-k\}}(\beta+dd^c \max (u, \rho-k))^n \}_{k\in \mathbb{N}}$ has convergence subsequence by Banach-Alaoglu theorem, and monotonicity implies that limit point is unique, thus the limit  $$ \beta_{u}^n:= \lim\limits_{k\rightarrow +\infty}  \ind{\{u>\rho-k\}}(\beta+dd^c \mathrm{max} (u, \rho-k))^n$$ is a well defined positive Radon measure.

\begin{defn}[{\cite{BEGZ10,LWZ24a}}]\label{defn: nn product}
	For any  $\varphi \in \mbox{PSH}(X,\beta)$, the non-pluripolar product $ \left\langle(\beta+dd^c \varphi)^n \right\rangle$ is defined as
	$$\langle \beta_\varphi^n \rangle :=\left\langle(\beta+dd^c \varphi)^n \right\rangle:=\lim_{k\rightarrow \infty} \ind{\{\varphi \textgreater \rho-k\}}(\beta+dd^c\varphi_k)^n,$$
	where $\varphi_k:=\max\{\varphi,\rho-k\}$.
\end{defn}
\begin{rem}
By the same argument as in \cite{BEGZ10}, the non-pluripolar product defined above possesses the  basic properties in \cite[Proposition 1.4]{BEGZ10}.
\end{rem}
\begin{thm} \label{thm: plur_prop1}
If $u, v \in PSH(X, \beta)$, then
$$ \ind{\{u>v\}} \langle(\beta+ dd^c u)^n\rangle = \ind{\{u>v\}} \langle(\beta+dd^c \mathrm{max} \{u, v \}   )^n\rangle.   $$
\end{thm}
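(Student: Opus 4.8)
The plan is to reduce the statement about non-pluripolar products of (possibly unbounded) $\beta$-psh functions to the bounded case already handled in Corollary~\ref{pl_prop}, via the canonical truncation $\varphi_k := \max\{\varphi, \rho-k\}$ used in Definition~\ref{defn: nn product}. First I would fix the notation $u_k := \max\{u,\rho-k\}$, $v_k := \max\{v,\rho-k\}$ and record the plurifine-locality fact that on the plurifine open set $\{u > v\}$ one has $u_k = \max\{u_k, v_k\}$ for every $k$, since $u > v$ forces $u > \rho-k$ whenever $v > \rho-k$, and on the part where $v \le \rho-k$ we get $u_k \ge v_k$ automatically. So $\{u>v\} \subset \{u_k \ge v_k\}$, and on the plurifine open subset $\{u > v\} \cap \{u_k > \rho-k\}$ (which equals $\{u>v\}\cap\{u>\rho-k\}$) we may freely replace $u_k$ by $\max\{u_k,v_k\}$.

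Next I would write, using the definition of the non-pluripolar product,
\begin{align*}
\ind{\{u>v\}}\langle(\beta+dd^c u)^n\rangle
&= \lim_{k\to\infty} \ind{\{u>v\}}\ind{\{u>\rho-k\}}(\beta+dd^c u_k)^n \\
&= \lim_{k\to\infty} \ind{\{u>v\}}\ind{\{u>\rho-k\}}(\beta+dd^c \max\{u_k,v_k\})^n,
\end{align*}
where the second equality is Corollary~\ref{pl_prop} applied to the bounded functions $u_k, v_k \in \PSH{X}{\beta}\cap L^\infty(X)$ on the set $\{u_k > v_k\} \supset \{u>v\}$, combined with Corollary~\ref{cor: plurifine locality_2} to intersect with the plurifine open set $\{u > \rho-k\}$. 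Now I want to recognize the right-hand side as $\ind{\{u>v\}}\langle(\beta+dd^c\max\{u,v\})^n\rangle$. For this, note $\max\{u,v\}_k := \max\{\max\{u,v\},\rho-k\} = \max\{u_k, v_k\}$, and $\{u>v\}\cap\{u>\rho-k\} = \{u>v\}\cap\{\max\{u,v\} > \rho-k\}$. Hence
$$
\ind{\{u>v\}}\ind{\{u>\rho-k\}}(\beta+dd^c\max\{u_k,v_k\})^n = \ind{\{u>v\}}\ind{\{\max\{u,v\}>\rho-k\}}(\beta+dd^c\max\{u,v\}_k)^n,
$$
and letting $k\to\infty$ the right-hand side converges to $\ind{\{u>v\}}\langle(\beta+dd^c\max\{u,v\})^n\rangle$ by Definition~\ref{defn: nn product}. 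This completes the chain of equalities.

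The main point to be careful about — and what I expect to be the only genuine obstacle — is the passage to the limit: one must make sure the increasing-measure / unique-limit-point argument preceding Definition~\ref{defn: nn product} really applies after multiplying by the fixed Borel indicator $\ind{\{u>v\}}$, i.e. that $\ind{\{u>v\}}\ind{\{u>\rho-k\}}(\beta+dd^c u_k)^n$ is itself a non-decreasing sequence of positive measures (which follows from Corollary~\ref{pl_prop} exactly as in the discussion before Definition~\ref{defn: nn product}, since on $\{u_k > \rho-k\}$ the truncations agree with one another plurifine-locally as $k$ grows), so that weak limits commute with multiplication by this indicator. Once that monotonicity is in hand, everything reduces to bookkeeping with the identities $\max\{u,v\}_k = \max\{u_k,v_k\}$ and $\{u>v\}\cap\{u>\rho-k\} = \{u>v\}\cap\{\max\{u,v\}>\rho-k\}$, together with Corollary~\ref{pl_prop} and Corollary~\ref{cor: plurifine locality_2} at the bounded level.
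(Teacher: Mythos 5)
Your proof is correct and follows essentially the same route as the paper: truncate by $\rho-k$, use the identity $\{u_k > v_k\} = \{u>v\}\cap\{u>\rho-k\}$ to apply Corollary~\ref{pl_prop} at the bounded level, identify $\max(u,v)_k = \max(u_k,v_k)$ and $\{u>v\}\cap\{u>\rho-k\} = \{u>v\}\cap\{\max(u,v)>\rho-k\}$, and pass to the limit in $k$ using monotonicity of the truncated measures. The paper organizes the limit passage slightly differently — it first establishes the stabilization identities $\ind{\{u>\rho-j\}}\langle\beta_u^n\rangle = \ind{\{u>\rho-j\}}\beta_{u_j}^n$ and the analogous one for $\max(u,v)$ (by observing the sequence $\ind{\{u>\rho-j\}}\beta_{u_k}^n$ is constant for $k\geq j$), combines them with the bounded-level identity at scale $j$, and then sends $j\to\infty$ invoking that non-pluripolar products do not charge pluripolar sets — whereas you take a single limit in $k$ directly on the bounded-level identity, relying on the fact (which you correctly flag as the one delicate point) that for a non-decreasing, mass-bounded sequence of measures the weak limit is also the set-wise limit, so multiplication by the fixed Borel indicator $\ind{\{u>v\}}$ commutes with the limit. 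Both arguments ultimately rest on this same monotonicity; your version is a bit more economical and does not need the explicit appeal to non-pluripolarity at the end (which the paper uses but which is anyway redundant since $\{u>v\}\subset\{u>-\infty\}$).
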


\begin{proof}

For $j \in \mathbb{N}$, set
$ u_j = \max\{u, \rho - j\}, \quad v_j = \max\{v, \rho - j\}. $
From Lemma \ref{pl_prop}, we have
$$
\ind{\{u_j > v_j\}} \beta_{u_j}^n = \ind{\{u_j > v_j\}} \beta_{\max\{u_j, v_j\}}^n.
$$
Since $\{u_j > v_j\} = \{u > v\} \cap \{u > \rho - j\}$, we have
\begin{equation} \label{eq:11}
\ind{\{u > v\} \cap \{u > \rho - j\}} \beta_{u_j}^n = \ind{\{u > v\} \cap \{u > \rho - j\}} \beta_{\max\{u_j, v_j\}}^n.
\end{equation}

By the plurifine property (Lemma \ref{pl_prop}), for $k \geq j$,
\begin{align*}
\ind{\{u > \rho - j\}} \ind{\{u > \rho - k\}} \beta_{u_k}^n &= \ind{\{u > \rho - j\}} \beta_{u_k}^n \\
&= \ind{\{u_k > \rho - j\}} \beta_{u_k}^n \\
&= \ind{\{u_k > \rho - j\}} \beta_{\max\{u_k, \rho - j\}}^n \\
&= \ind{\{u > \rho - j\}} \beta_{u_j}^n.
\end{align*}
Thus, we have
$$
\ind{\{u > \rho - j\}} \ind{\{u > \rho - k\}} \beta_{u_k}^n = \ind{\{u > \rho - j\}} \beta_{u_j}^n.
$$
Letting $k \to +\infty$, by the definition of the non-pluripolar Monge-Amp\`ere measure, we get
\begin{equation} \label{eq:22}
\ind{\{u > \rho - j\}} \langle\beta_u^n\rangle = \ind{\{u > \rho - j\}} \beta_{u_j}^n.
\end{equation}

Similarly,
\begin{align*}
&\ind{\{u > v\} \cap \{u > \rho - j\}} \ind{\{\max(u, v) > \rho - k\}} (\beta + dd^c \max\{u, v\}_k)^n \\
&= \ind{\{u > v\} \cap \{u > \rho - j\}} \beta_{\max(u, v)_k}^n \\
&= \ind{\{u > v\} \cap \{\max(u, v)_k > \rho - j\}} \beta_{\max(u, v)_k}^n \\
&= \ind{\{u > v\} \cap \{\max(u, v)_k > \rho - j\}} \beta_{\max\{\max(u, v)_k, \rho - j\}}^n \\
&= \ind{\{u > v\} \cap \{u > \rho - j\}} \beta_{\max(u, v)_j}^n.
\end{align*}
Letting $k \to +\infty$, by the definition of the non-pluripolar Monge-Amp\`ere measure, we get
\begin{equation} \label{eq:33}
\ind{\{u > v\} \cap \{u > \rho - j\}} \langle\beta_{\max\{u, v\}}^n \rangle = \ind{\{u > v\} \cap \{u > \rho - j\}} \beta_{\max(u, v)_j}^n.
\end{equation}

Combining equations \eqref{eq:11}, \eqref{eq:22}, and \eqref{eq:33}, we obtain
$$
\ind{\{u > v\} \cap \{u > \rho - j\}} \langle\beta_u^n \rangle = \ind{\{u > v\} \cap \{u > \rho - j\}} \langle\beta _{\max\{u, v\}}^n\rangle.
$$
Letting $j \to +\infty$, and noting that $\langle\beta_u^n \rangle$ and $\langle\beta_{\max(u, v)}^n \rangle$ do not charge pluripolar sets, we conclude that
$$
\ind{\{u > v\}} \langle\beta_u^n \rangle = \ind{\{u > v\}} \langle\beta_{\max\{u, v\}}^n \rangle.
$$

\end{proof}

\begin{thm} \label{thm: plurifine_locality_3}
	Let $u, v \in \text{PSH}(X, \beta)$ and let $O$ be a plurifine open set such that $u = v$ on $O$. Then,
	$$
	\ind{O} \langle \beta_u^n \rangle = \ind{O} \langle \beta_v^n \rangle.
	$$
\end{thm}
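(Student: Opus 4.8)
The plan is to reduce the plurifine-locality statement for the non-pluripolar product to the already-established bounded case (Lemma~\ref{plurifine locality_1}, or rather its unweighted version Corollary~\ref{cor: plurifine locality_2}) by the same truncation procedure used in the proof of Theorem~\ref{thm: plur_prop1}. First I would set $u_j=\max\{u,\rho-j\}$ and $v_j=\max\{v,\rho-j\}$, which are bounded $\beta$-psh functions. The issue is that $u=v$ on $O$ does not immediately give $u_j=v_j$ on all of $O$; however, on the plurifine open set $O\cap\{u>\rho-j\}=O\cap\{v>\rho-j\}$ we do have $u_j=u=v=v_j$. Since the intersection of two plurifine open sets is plurifine open, Corollary~\ref{cor: plurifine locality_2} applies on this set and yields
$$
\ind{O\cap\{u>\rho-j\}}\,\beta_{u_j}^n = \ind{O\cap\{u>\rho-j\}}\,\beta_{v_j}^n.
$$

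Next I would use the key identity already proved inside Theorem~\ref{thm: plur_prop1}, namely equation~\eqref{eq:22}: $\ind{\{u>\rho-j\}}\langle\beta_u^n\rangle=\ind{\{u>\rho-j\}}\beta_{u_j}^n$, and its analogue for $v$. Multiplying through by $\ind{O}$ and combining with the bounded-case identity above gives
$$
\ind{O\cap\{u>\rho-j\}}\langle\beta_u^n\rangle = \ind{O\cap\{u>\rho-j\}}\langle\beta_v^n\rangle.
$$
Here I should be slightly careful: to invoke \eqref{eq:22} I need $O\cap\{u>\rho-j\}=O\cap\{v>\rho-j\}$, which holds precisely because $u=v$ on $O$, so that $\ind{O}\ind{\{v>\rho-j\}}\langle\beta_v^n\rangle = \ind{O}\ind{\{u>\rho-j\}}\beta_{v_j}^n$ as well.

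Finally I would let $j\to+\infty$. Since $u\in\mathrm{PSH}(X,\beta)$ the sets $\{u>\rho-j\}$ increase to $X\setminus\{u=-\infty\}$, i.e.\ to the complement of a pluripolar set, and both $\langle\beta_u^n\rangle$ and $\langle\beta_v^n\rangle$ put no mass on pluripolar sets (a basic property of the non-pluripolar product, as recorded in the remark after Definition~\ref{defn: nn product}). Hence $\ind{O\cap\{u>\rho-j\}}\langle\beta_u^n\rangle\uparrow\ind{O}\langle\beta_u^n\rangle$ and likewise for $v$, giving $\ind{O}\langle\beta_u^n\rangle=\ind{O}\langle\beta_v^n\rangle$.

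The only genuine subtlety — rather than an obstacle — is the bookkeeping with the truncation sets: one must consistently exploit the equality $O\cap\{u>\rho-j\}=O\cap\{v>\rho-j\}$ coming from $u=v$ on $O$, in order to match up the contact sets on the $u$-side and the $v$-side before taking limits. Everything else is a direct assembly of Corollary~\ref{cor: plurifine locality_2}, the intermediate identity \eqref{eq:22} extracted from the proof of Theorem~\ref{thm: plur_prop1}, and the fact that non-pluripolar products do not charge pluripolar sets; no new estimates are required.
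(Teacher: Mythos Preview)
Your proof is correct and follows essentially the same approach as the paper: truncate to the bounded case, invoke Corollary~\ref{cor: plurifine locality_2} on the plurifine open intersection, and pass to the limit using that the non-pluripolar product puts no mass on pluripolar sets. The only organizational difference is that you exploit $O\cap\{u>\rho-j\}=O\cap\{v>\rho-j\}$ to work with a single truncation index and invoke \eqref{eq:22} directly, whereas the paper uses two independent indices $k,j$ on the set $E_{k,j}=\{u>\rho-k\}\cap\{v>\rho-j\}$ and takes sequential limits; your version is a mild streamlining of the same argument.
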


\begin{proof}
	Following \cite[Proposition 1.4 (a)]{BEGZ10}, set $E_{k,j} := \{u > \rho - k\} \cap \{v > \rho - j\}$. By Corollary \ref{cor: plurifine locality_2},
	$$
	\ind{O \cap E_{k,j}} (\beta + dd^c u_k)^n = \ind{O \cap E_{k,j}} (\beta + dd^c v_j)^n.
	$$
	
	First, let $k \to +\infty$. Since $\{u > \rho - k\} \uparrow \{u > -\infty\}$, we have
	$$
	\ind{O \cap \{v > \rho - j\}} \langle (\beta + dd^c u)^n \rangle = \ind{O \cap \{v > \rho - j\} \cap \{u > -\infty\}} (\beta + dd^c v_j)^n.
	$$
	
	Next, let $j \to +\infty$. Since $\{v > \rho - j\} \uparrow \{v > -\infty\}$, we obtain
	$$
	\ind{O \cap \{v > -\infty\}} \langle (\beta + dd^c u)^n \rangle = \ind{O \cap \{u > -\infty\}} \langle(\beta + dd^c v)^n \rangle.
	$$
	
	Since the non-pluripolar Monge-Amp\`ere measures do not charge pluripolar sets, it follows that
	$$
	\ind{O} \langle \beta_u^n \rangle = \ind{O} \langle \beta_v^n \rangle.
	$$
\end{proof}

\begin{cor} \label{cor: comparison}
	If $u, v \in \text{PSH}(X, \beta)$, then
	$$
	\langle \beta_{\max(u,v)}^n \rangle  \geq \ind{\{u > v\}} \langle\beta_u^n \rangle + \ind{\{u \leq v\}} \langle\beta_v^n \rangle.
	$$
	In particular, if $u \leq v$ quasi-everywhere, then
	$$
	\ind{\{u = v\}} \langle \beta_v^n \rangle \geq \ind{\{u = v\}} \langle \beta_u^n \rangle.
	$$
\end{cor}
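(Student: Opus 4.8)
The plan is to deduce both assertions from a single \emph{contact-set monotonicity} statement: for any $\phi,\psi\in PSH(X,\beta)$ with $\phi\le\psi$ one has
$\ind{\{\phi=\psi\}}\langle\beta_\psi^n\rangle\ge\ind{\{\phi=\psi\}}\langle\beta_\phi^n\rangle$.
Granting this, the last assertion of the corollary is exactly the case $\phi=u$, $\psi=v$: note that $u\le v$ quasi-everywhere forces $u\le v$ everywhere, since a pluripolar set has Lebesgue measure zero and two $\beta$-psh functions with $u\le v$ almost everywhere satisfy $u\le v$ everywhere by the sub-mean-value inequality. For the first inequality, put $w=\max(u,v)$. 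Theorem \ref{thm: plur_prop1} gives $\ind{\{u>v\}}\langle\beta_u^n\rangle=\ind{\{u>v\}}\langle\beta_w^n\rangle$, while the monotonicity statement applied to the ordered pair $v\le w$, together with the set identity $\{v=w\}=\{u\le v\}$, gives $\ind{\{u\le v\}}\langle\beta_w^n\rangle\ge\ind{\{u\le v\}}\langle\beta_v^n\rangle$. Adding these two and using $\langle\beta_w^n\rangle=\ind{\{u>v\}}\langle\beta_w^n\rangle+\ind{\{u\le v\}}\langle\beta_w^n\rangle$ yields $\langle\beta_w^n\rangle\ge\ind{\{u>v\}}\langle\beta_u^n\rangle+\ind{\{u\le v\}}\langle\beta_v^n\rangle$, which is the first claim.

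It remains to prove the contact-set monotonicity, and I would do this first for bounded $\phi,\psi$. Fix $\varepsilon>0$ and set $\psi_\varepsilon:=\max(\phi,\psi-\varepsilon)$, a bounded $\beta$-psh function with $\psi_\varepsilon\le\psi$. On the plurifine open set $\{\phi>\psi-\varepsilon\}$, which contains $\{\phi=\psi\}$, one has $\psi_\varepsilon\equiv\phi$, so Corollary \ref{cor: plurifine locality_2} gives $\ind{\{\phi=\psi\}}(\beta+dd^c\phi)^n=\ind{\{\phi=\psi\}}(\beta+dd^c\psi_\varepsilon)^n\le(\beta+dd^c\psi_\varepsilon)^n$. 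As $\varepsilon\downarrow0$ we have $\psi_\varepsilon\uparrow\psi$ with the family uniformly bounded, so by the Bedford-Taylor continuity of the complex Monge-Amp\`ere operator along increasing sequences, $(\beta+dd^c\psi_\varepsilon)^n\to(\beta+dd^c\psi)^n$ weakly; testing against nonnegative continuous functions gives $\ind{\{\phi=\psi\}}(\beta+dd^c\phi)^n\le(\beta+dd^c\psi)^n$, and since the left-hand measure is carried by $\{\phi=\psi\}$ we may restrict the right-hand side to $\{\phi=\psi\}$, which is the claim in the bounded case.

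For general $\phi\le\psi$ I would pass to the canonical truncations $\phi_k=\max(\phi,\rho-k)$, $\psi_k=\max(\psi,\rho-k)$, which are bounded, satisfy $\phi_k\le\psi_k$ and $\{\phi=\psi\}\subseteq\{\phi_k=\psi_k\}$. Applying the bounded case and then intersecting with $\{\phi>\rho-k\}$ — which on $\{\phi=\psi\}$ coincides with $\{\psi>\rho-k\}$ — and using the plurifine locality identity $\ind{\{\phi>\rho-k\}}\langle\beta_\phi^n\rangle=\ind{\{\phi>\rho-k\}}(\beta+dd^c\phi_k)^n$ from \eqref{eq:22} (and likewise for $\psi$) turns the inequality into $\ind{\{\phi=\psi\}\cap\{\phi>\rho-k\}}\langle\beta_\phi^n\rangle\le\ind{\{\phi=\psi\}\cap\{\phi>\rho-k\}}\langle\beta_\psi^n\rangle$. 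Letting $k\to+\infty$, monotone convergence together with the facts that $\{\phi=-\infty\}$ is pluripolar and the non-pluripolar products do not charge pluripolar sets yields $\ind{\{\phi=\psi\}}\langle\beta_\phi^n\rangle\le\ind{\{\phi=\psi\}}\langle\beta_\psi^n\rangle$. The step I expect to be the crux is the passage $\varepsilon\to0$ in the bounded case: deducing from the bound $\nu\le(\beta+dd^c\psi_\varepsilon)^n$ valid for every $\varepsilon$ that $\nu$ is dominated by the restriction of $(\beta+dd^c\psi)^n$ to the contact set, which relies precisely on weak continuity along increasing sequences and on $\nu$ being supported on $\{\phi=\psi\}$.
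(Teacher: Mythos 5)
Your proof is correct and self-contained. The paper itself disposes of this corollary by citing \cite[Lemma 2.9]{DDL23}; you instead supply the full argument, which is a legitimate and welcome difference. Your logical structure — first establish the contact-set monotonicity $\ind{\{\phi=\psi\}}\langle\beta_\psi^n\rangle\ge\ind{\{\phi=\psi\}}\langle\beta_\phi^n\rangle$ for $\phi\le\psi$, then deduce the first displayed inequality by decomposing $X=\{u>v\}\sqcup\{u\le v\}$ and combining it with Theorem \ref{thm: plur_prop1} applied on $\{u>v\}$ — correctly inverts the order in which the corollary states its two claims, treating the ``in particular'' part as the engine. The bounded case is handled exactly the right way: $\psi_\varepsilon:=\max(\phi,\psi-\varepsilon)$ agrees with $\phi$ on the plurifine open set $\{\phi>\psi-\varepsilon\}\supseteq\{\phi=\psi\}$, Corollary \ref{cor: plurifine locality_2} gives $\ind{\{\phi=\psi\}}(\beta+dd^c\phi)^n\le(\beta+dd^c\psi_\varepsilon)^n$, and since $\psi_\varepsilon\uparrow\psi$ Bedford--Taylor convergence lets you pass to the limit against nonnegative continuous test functions to conclude $\ind{\{\phi=\psi\}}(\beta+dd^c\phi)^n\le\ind{\{\phi=\psi\}}(\beta+dd^c\psi)^n$. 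The reduction from the general case to the bounded case via the truncations $\phi_k=\max(\phi,\rho-k)$, $\psi_k=\max(\psi,\rho-k)$, the containment $\{\phi=\psi\}\subseteq\{\phi_k=\psi_k\}$, the identity \eqref{eq:22}, and the observation that on $\{\phi=\psi\}$ the sets $\{\phi>\rho-k\}$ and $\{\psi>\rho-k\}$ coincide, is carried out correctly, and the final passage $k\to\infty$ is justified by monotone convergence together with the fact that non-pluripolar products put no mass on $\{\phi=-\infty\}$. You also correctly note that $u\le v$ quasi-everywhere forces $u\le v$ everywhere for $\beta$-psh functions, so the q.e. hypothesis in the second assertion can be upgraded before applying the monotonicity lemma. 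This is, in substance, the same circle of ideas used in \cite{DDL23}, transplanted to the present Hermitian and non-pluripolar framework; what you gain is independence from the external reference, and nothing is lost in rigor.
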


\begin{proof}
	The proof follows from \cite[Lemma 2.9]{DDL23}.
\end{proof}

\subsection{Quasi-plurisubharmonic envelopes}

If $h: X \to \mathbb{R} \cup \{\pm \infty\}$ is a measurable function, the $\beta$-plurisubharmonic (psh) envelope of $h$ is defined by
$$
\env{h}{\beta} = \left( \sup \left\{ \varphi \in \PSH{X}{\beta} : \varphi \leq h \quad \text{quasi-everywhere} \right\} \right)^*,
$$
with the convention that $\sup \emptyset = -\infty$. When $h = \min(u, v)$, we use the notation $\env{u, v}{\beta} = \env{\min(u, v)}{\beta}$.


\begin{lem} \label{lem: neg}
	Let $\{u_j\}_{j \in \mathbb{N}}$ be a sequence of $\beta$-psh functions on $X$ that is uniformly bounded above. Then the set $\{ (\sup_{j} u_j)^* > \sup_{j} u_j \}$ is a pluripolar set.
\end{lem}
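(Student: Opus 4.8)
The plan is to localize the problem and apply the classical theorem of Bedford--Taylor asserting that negligible sets of plurisubharmonic functions are pluripolar. Write $\phi := \sup_j u_j$ and $N := \{\phi^* > \phi\}$. First I would invoke Choquet's lemma to extract a countable subfamily $\{u_{j_k}\}_k$ with $(\sup_k u_{j_k})^* = \phi^*$; since $\sup_k u_{j_k} \le \phi$ pointwise, one has $N \subseteq \{(\sup_k u_{j_k})^* > \sup_k u_{j_k}\}$, and any subset of a pluripolar set is pluripolar, so it suffices to treat the case of a countable family. Hence from now on I assume $\{u_j\}_{j\in\mathbb N}$ is countable.

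Next, using that $\beta$ is closed, I would fix a finite cover $X = \bigcup_{i=1}^m \Omega_i$ by coordinate balls on each of which $\beta = dd^c g_i$ for some smooth $g_i$. On $\Omega_i$ the functions $v_j := u_j + g_i$ are plurisubharmonic and, since $\{u_j\}$ is uniformly bounded above on $X$ and $g_i$ is bounded on $\Omega_i$, they are uniformly bounded above; thus $v := \sup_j v_j = \phi + g_i$ on $\Omega_i$ has $v^* \in PSH(\Omega_i)$, and continuity of $g_i$ gives $v^* = \phi^* + g_i$ on $\Omega_i$. Consequently
$$ N \cap \Omega_i = \{v^* > v\}. $$
By the Bedford--Taylor theorem on negligible sets (cf. \cite{BT87}, \cite{GZ17}), $\{v^* > v\}$ is pluripolar in $\Omega_i$, i.e. there is $w_i \in PSH(\Omega_i)$ with $w_i \not\equiv -\infty$ and $N \cap \Omega_i \subseteq \{w_i = -\infty\}$. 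Therefore $N = \bigcup_{i=1}^m (N \cap \Omega_i)$ is locally pluripolar, which already suffices for the way this lemma is used later (measures not charging $N$, equality of functions off $N$, etc.).

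If one further wants $N$ to be globally pluripolar, the remaining step is to glue the local potentials $w_i$ into a single function $\psi \in PSH(X, C\omega)$ (for $C$ large) with $N \subseteq \{\psi = -\infty\}$; this is where compactness of $X$ and the bounded $\beta$-psh function $\rho$ enter, the argument running parallel to the compact K\"ahler case (the Guedj--Zeriahi / Josefson-type globalization of pluripolar sets). I expect this gluing, rather than the local reduction, to be the delicate point, since one must control the global behaviour of the patched potential while keeping it $-\infty$ on all of $N$; everything preceding it is routine.
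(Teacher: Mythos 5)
Your proof is correct and takes essentially the same route as the paper: localize on coordinate balls where $\beta = dd^c g$, add the local potential $g$ so the supremum becomes a supremum of genuine psh functions, and invoke the Bedford--Taylor theorem that negligible sets are pluripolar; the paper's proof, like yours, establishes only local pluripolarity here, and the globalization you flag is indeed handled separately by Lemma~\ref{lem: pluripolar} (cited from \cite{LLZ24}), which is invoked later whenever a single global $\beta$-psh potential with poles on the set is needed. One small redundancy: the Choquet reduction at the start is unnecessary, since the family is already indexed by $\mathbb{N}$ and hence countable by hypothesis.
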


\begin{proof}
	The result is local, so we may work on a coordinate ball $B$. On this ball, we can assume that $\beta = dd^c \psi$, where $\psi$ is smooth on $B$ and continuous on $\bar{B}$. We have:

	\begin{align*}
	\{ (\sup_{j} u_j)^* > \sup_{j} u_j \}
	&= \{ \psi + (\sup_{j} u_j)^* > \psi + \sup_{j} u_j \} \\
	&= \{ (\sup_{j} (u_j + \psi))^* > \sup_{j} (u_j + \psi) \}.
	\end{align*}

	The last set is negligible in the sense of \cite{BT82}, and by \cite[Theorem 7.1]{BT82}, it follows that the set above is pluripolar.
\end{proof}

\begin{lem} \label{lem: key1}
	If $h: X \to \mathbb{R} \cup \{-\infty\}$ is a measurable function that is bounded above, then either
	$$
	\sup \left\{ \varphi \in \PSH{X}{\beta} : \varphi \leq h \quad \text{quasi-everywhere} \right\}
	$$
	is bounded above on $X$, or  the set
	$$
	\left\{ \varphi \in \PSH{X}{\beta} : \varphi \leq h \quad \text{quasi-everywhere} \right\}
	$$
	is empty, in which case  by  convention,
	$$
	\sup \left\{ \varphi \in \PSH{X}{\beta} : \varphi \leq h \quad \text{quasi-everywhere} \right\} = -\infty.
	$$
\end{lem}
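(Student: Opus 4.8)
The plan is to reduce the dichotomy to a single clean statement: every member of the competitor family
$$\mathcal{F} := \left\{ \varphi \in \PSH{X}{\beta} : \varphi \leq h \ \text{quasi-everywhere} \right\}$$
is bounded above on all of $X$ by the \emph{same} constant $C := \sup_X h$, which is finite by hypothesis. Granting this, $\sup \mathcal{F} \leq C$ pointwise, so $\sup \mathcal{F}$ is bounded above; and if instead no such $\varphi$ exists, i.e.\ $\mathcal{F} = \emptyset$, then the conclusion holds by the stated convention $\sup \emptyset = -\infty$. Thus the whole lemma amounts to the pointwise bound $\varphi \leq C$ for $\varphi \in \mathcal{F}$.

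To prove that bound, first I would fix $\varphi \in \mathcal{F}$ and observe that the set $N := \{\varphi > h\}$ is pluripolar by the definition of ``quasi-everywhere''; since pluripolar sets are Lebesgue-negligible, it follows that $\varphi \leq h \leq C$ Lebesgue-almost everywhere on $X$. This is the only place where the hypothesis ``$h$ bounded above'' is used, and it converts the a priori only ``quasi-everywhere'' control into an honest almost-everywhere bound.

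Next I would upgrade ``a.e.'' to ``everywhere'' by a standard local-potential argument. Cover the compact manifold $X$ by finitely many coordinate balls $B_1, \dots, B_N$ on each of which $\beta = dd^c \psi_i$ for some smooth (hence locally bounded and continuous) function $\psi_i$. On $B_i$ the function $u_i := \varphi + \psi_i$ is plurisubharmonic and satisfies $u_i \leq C + \psi_i$ almost everywhere. Fix $x \in B_i$ and $r > 0$ with $\overline{B(x,r)} \subset B_i$; the sub-mean value inequality for psh functions together with the a.e.\ bound gives
$$u_i(x) \leq \frac{1}{|B(x,r)|}\int_{B(x,r)} u_i \, dV \leq C + \frac{1}{|B(x,r)|}\int_{B(x,r)} \psi_i \, dV,$$
and letting $r \to 0$, using continuity of $\psi_i$, yields $u_i(x) \leq C + \psi_i(x)$, i.e.\ $\varphi(x) \leq C$. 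As $x$ and $i$ were arbitrary, $\varphi \leq C$ on $X$, completing the argument.

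I do not expect a genuine obstacle here; the proof is routine. The only points requiring a little care are: making sure the finitely many local potentials $\psi_i$ can be taken continuous up to (slightly shrunk) closed balls, which is automatic from smoothness and compactness; and the elementary but essential fact that pluripolar sets carry no Lebesgue mass, which is what makes the step ``$\varphi \leq h$ q.e.'' $\Rightarrow$ ``$\varphi$ essentially bounded above by $C$'' legitimate. One should also note that the argument incidentally shows the sharper bound $\sup\mathcal F \le \sup_X h$, which will be convenient when combined with Lemma~\ref{lem: neg} to conclude that $\env{h}{\beta}$ is a genuine $\beta$-psh function whenever $\mathcal{F} \neq \emptyset$.
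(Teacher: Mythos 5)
Your proposal is correct and takes essentially the same approach as the paper: both invoke local potentials $\psi_i$ for $\beta$, the sub-mean value inequality for the psh function $\varphi + \psi_i$, and the fact that pluripolar sets are Lebesgue-null to convert the quasi-everywhere bound $\varphi \leq h \leq C$ into an integral estimate. The only difference is cosmetic: you let the averaging radius shrink to zero and use continuity of $\psi_i$ to obtain the sharper pointwise bound $\varphi \leq \sup_X h$, whereas the paper keeps a fixed radius and settles for the cruder bound $\sup_X h + 2\max_i\|\phi_i\|_\infty$, which is equally sufficient for the lemma.
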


\begin{proof}
	First, if the set
	$$
	S := \left\{ \varphi \in \PSH{X}{\beta} : \varphi \leq h \quad \text{quasi-everywhere} \right\}
	$$
	is empty, the conclusion is immediate by definition. Therefore, we assume that $S$ is non-empty.
	
	Let $h: X \to \mathbb{R} \cup \{-\infty\}$ be a measurable function that is bounded above by some constant $C \in \mathbb{R}$. We need to show that every $\varphi \in S$ is bounded above by a uniform constant on $X$.
	
	Choose an open cover $\{U_i\}_{i=1}^n$ of $X$ such that on each $U_i$, $\beta = dd^c \phi_i$, where $\phi_i \in \mathcal{C}^\infty(U_i) \cap L^\infty(U_i)$. Let $M := \max_{i=1}^n \|\phi_i\|_\infty$.
	
	For any $\varphi \in S$ and any point $z_0 \in X$, there exists a ball $B_0$ centered at $z_0$ such that $B_0 \subseteq U_i$ for some $i$. Since $\phi_i + \varphi \in \text{PSH}(B_0)$, by the sub-mean value property of plurisubharmonic functions, we have:
	$$
	\phi_i(z_0) + \varphi(z_0) \leq \frac{1}{\text{Vol}(B_0)} \int_{B_0} (\phi_i(z) + \varphi(z)) \, dV.
	$$
	
	Since $\varphi \leq h \leq C$ quasi-everywhere on $X$, and $h$ is bounded above by $C$ outside a pluripolar set $P$, we can write:
	$$
	\phi_i(z_0) + \varphi(z_0) \leq \frac{1}{\text{Vol}(B_0)} \int_{B_0 \setminus P} (\phi_i(z) + \varphi(z)) \, dV.
	$$
	
	Given that $\|\phi_i\|_\infty \leq M$ and $\varphi \leq C$ on $B_0 \setminus P$, we have:
	$$
	\phi_i(z_0) + \varphi(z_0) \leq \frac{1}{\text{Vol}(B_0)} \int_{B_0 \setminus P} (C + M) \, dV = C + M.
	$$
	
	Thus,
	$$
	\varphi(z_0) \leq C + M - \phi_i(z_0).
	$$
	
	Since $\|\phi_i\|_\infty \leq M$, it follows that:
	$$
	\varphi(z_0) \leq C + M + M = C + 2M.
	$$
	
	Therefore, $\varphi$ is bounded above by $C + 2M$ for all $z_0 \in X$. This shows that the supremum of all such $\varphi$ is also bounded above by $C + 2M$.
	
	In conclusion, if $S$ is non-empty, then
	$$
	\sup \left\{ \varphi \in \PSH{X}{\beta} : \varphi \leq h \quad \text{quasi-everywhere} \right\}
	$$
	is bounded above by $C + 2M$ on $X$.
\end{proof}

We need the following lemma.
\begin{lem}[\cite{LLZ24}] \label{lem: pluripolar}
	If $\beta$ is a closed real $(1,1)$-form on $X$ with a bounded $\beta$-psh potential, then any locally pluripolar set in $X$ is globally pluripolar with respect to $\PSH{X}{\beta}$.
\end{lem}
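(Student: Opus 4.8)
\emph{The plan} is to use the positivity of $\rho$ to reduce to a Josefson–Bedford–Taylor argument inside a single coordinate ball, and then to patch the local data back to $X$ by means of $\beta$-envelopes.

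\smallskip

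\emph{Step 1: reduction.} First note that $\mbox{PSH}(X,\beta)$ is convex: for $\varphi_1,\dots,\varphi_N\in\mbox{PSH}(X,\beta)$ and $c_i\ge 0$ with $\sum c_i=1$ one has $\beta+dd^c\!\big(\sum c_i\varphi_i\big)=\sum c_i(\beta+dd^c\varphi_i)\ge 0$, so a finite union of globally $\beta$-pluripolar sets is again globally $\beta$-pluripolar (use $N^{-1}\sum\varphi_i$). Since $X$ is compact and $E$ is locally pluripolar, $E$ is covered by finitely many coordinate balls on each of which it lies in the $-\infty$-locus of a local psh function; hence it suffices to prove: if $F\Subset B''\Subset B$ with $B$ a coordinate ball and $F\subset\{u=-\infty\}$ for some $u\in\mbox{PSH}(B)$, $u\le 0$, then $F$ is globally $\beta$-pluripolar. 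Normalize $\rho\le 0$ and set $\theta:=\beta+dd^c\rho\ge 0$, a closed positive $(1,1)$-current with bounded local potentials; recall $\varphi\in\mbox{PSH}(X,\beta)\iff\varphi-\rho\in\mbox{PSH}(X,\theta)$.

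\smallskip

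\emph{Step 2: local competitors and globalization.} As $F$ is pluripolar in $B$ and relatively compact there, its relative extremal function (with respect to a ball $B'$, $\overline{B''}\Subset B'\Subset B$) vanishes identically \cite{BT82}; so there is an increasing sequence $v_k\in\mbox{PSH}(B')$ with $v_k\le 0$, $v_k|_F\le -1$ and $v_k\uparrow 0$ a.e., hence in $L^1_{\mathrm{loc}}(B')$. For each $j$ pick $k(j)$ with $\int_{B''}|j\,v_{k(j)}|\,dV<1$ and set $\tilde w_j:=\max(j\,v_{k(j)},-2j)\in\mbox{PSH}(B')\cap L^\infty$, so $-2j\le\tilde w_j\le 0$, $\tilde w_j|_F\le -j$, $\|\tilde w_j\|_{L^1(B'')}<1$. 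Let $h_j:=\tilde w_j+\rho$ on $\overline{B''}$, $h_j:=0$ on $X\setminus\overline{B''}$, and put $\psi_j:=\env{h_j}{\beta}$. Then: (i) the competing family is nonempty, since $\rho-2j\le h_j$ on $X$ (because $\tilde w_j\ge-2j$ on $\overline{B''}$ and $\rho-2j\le 0$ elsewhere), so $\psi_j\ge\rho-2j$ and $\psi_j\in\mbox{PSH}(X,\beta)$, $\psi_j\le 0$; (ii) on the open set $B''$ the function $\tilde w_j+\rho$ is upper semicontinuous and dominates every competitor, and, after subtracting $\rho$, competitors are $\theta$-psh there, hence coincide with their essential upper limits, so ``quasi-everywhere $\le$'' upgrades to ``$\le$'' and the regularization cannot escape: $\psi_j\le\tilde w_j+\rho\le -j$ on $F$; (iii) using $\|\tilde w_j\|_{L^1(B'')}<1$ together with the quantitative envelope estimates of \cite{GLZ19} (and the $L^1$-compactness of $\{\psi\in\mbox{PSH}(X,\beta):\sup_X\psi=0\}$, which rests on the sub-mean-value argument of the proof of Lemma~\ref{lem: key1}), one gets a uniform bound $\int_X(-\psi_j)\,\omega^n\le C_0$ independent of $j$.

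\smallskip

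\emph{Step 3: summation.} With $c_j:=6/(\pi^2 j^2)$ (so $\sum_j c_j=1$ while $\sum_j c_j\,j=+\infty$) and $M:=\|\rho\|_\infty$, set
$$\varphi:=\rho+M+\sum_{j\ge 1}c_j\big(\psi_j-\rho-M\big).$$
Each summand is $\le 0$, and the partial sums $S_N$ are $\beta$-psh because
$$\beta+dd^cS_N=\Big(1-\sum_{j\le N}c_j\Big)\theta+\sum_{j\le N}c_j\big(\beta+dd^c\psi_j\big)\ge 0,$$
which uses $\theta\ge 0$ essentially. By (iii), $\|\psi_j-\rho-M\|_{L^1(\omega^n)}\le C_0+M\,\mathrm{Vol}_\omega(X)$ uniformly, so the series converges in $L^1(\omega^n)$; the decreasing limit $\varphi$ is thus $\not\equiv-\infty$, hence $\varphi\in\mbox{PSH}(X,\beta)$. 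On $F$ each summand is $\le -c_j\,j$, so $\varphi\equiv-\infty$ there. Therefore $F$, and hence $E$, is globally $\beta$-pluripolar.

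\smallskip

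\emph{Main obstacle.} The crux is Step 2(iii): one must show the envelopes $\psi_j$, although forced down to $-j$ on the pluripolar set $F$, remain of uniformly bounded mass because the local data $\tilde w_j$ is $L^1$-small away from $F$. This is exactly where the degeneracy of $\{\beta\}$ and the merely bounded (not smooth, not semipositive) potential $\rho$ genuinely intervene, and where the envelope machinery of \cite{GLZ19}, applied through the correspondence $\mbox{PSH}(X,\beta)\leftrightarrow\mbox{PSH}(X,\theta)$, is needed. A secondary delicate point is the ``quasi-everywhere versus everywhere'' issue in Step 2(ii), handled via upper semicontinuity of $\tilde w_j+\rho$ and the fact that $\theta$-psh functions equal their essential upper limits.
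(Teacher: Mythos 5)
The paper does not prove this lemma; it is cited directly from \cite{LLZ24}. So there is no proof in the paper to compare against, and I will only assess whether your argument is correct.

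Your Steps 1, 2(i), 2(ii), and 3 are sound (Step 2(ii) is in fact cleaner than you make it: since $\tilde w_j+\rho$ is upper semicontinuous on $B''$ — $\tilde w_j$ psh, $\rho$ $\beta$-psh — the upper regularization $\psi_j=\bigl(\sup_\alpha\varphi_\alpha\bigr)^*$ satisfies $\psi_j\le (\tilde w_j+\rho)^*=\tilde w_j+\rho$ on $B''$ directly, with no need to invoke essential upper limits). The problem is Step 2(iii), and it is a genuine gap rather than an omitted routine detail. You need a \emph{uniform} bound $\int_X(-\psi_j)\,\omega^n\le C_0$, but the only lower bound you have exhibited for $\psi_j$ is the competitor $\rho-2j$, which degenerates with $j$. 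To get a $j$-independent bound via the $L^1$-compactness of $\{\psi:\sup_X\psi=0\}$ you would first need to know that $\sup_X\psi_j$ stays bounded below, and the only way to see that from the definition of the envelope is to exhibit \emph{some} competitor $\varphi\le h_j$ in $\mbox{PSH}(X,\beta)$ with $\sup_X\varphi$ uniformly bounded. Producing such a $\varphi$ is precisely a quantitative version of the globalization statement you are trying to prove: it amounts to extending a psh function on a ball that is small in $L^1$ but unbounded below (very negative on $F$) to a global $\beta$-psh function with controlled $L^1$-norm. Absent strict positivity of $\{\beta\}$, there is no cutoff trick available, and the references you point to — the envelope estimates of \cite{GLZ19} — are proved for K\"ahler forms $\omega$ and do not directly supply such an extension in the degenerate Hermitian setting where one only has a bounded $\beta$-psh potential. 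So as written the argument is circular at its crucial point. To repair it you would need an independent extension/globalization lemma (e.g.\ via the capacity $\mbox{Cap}_\beta$ and a characterization of pluripolar sets in terms of $\mbox{Cap}_\beta$, which is the kind of route \cite{LLZ24} takes), or a genuinely new estimate making the $\psi_j$ uniformly integrable; neither is supplied here.
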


\begin{prop} \label{prop: representation}
	Let $\beta$ be a closed real $(1,1)$-form on $X$.
	\begin{itemize}
	
\item [1.] If $h: X \to \mathbb{R}$ is a bounded measurable function, we define the $\beta$-psh envelope of $h$ as
	$$
	\tilde{P}_\beta(h)= \left( \sup \left\{ \varphi \in \PSH{X}{\beta} : \varphi \leq h \right\} \right)^*,
	$$
	then we have
	$$
	\tilde{P}_\beta(h)= \left( \sup \left\{ \varphi \in \PSH{X}{\beta} : \varphi \leq h \quad \text{quasi-everywhere} \right\} \right)^*=\env{h}{\beta}.
	$$
	In particular, if $h_j \downarrow h$ is a sequence of bounded functions, then
	$$
	\env{h_j}{\beta} \downarrow \env{h}{\beta}.
	$$
	
	\item[2.] If $h: X \to \mathbb{R} \cup \{-\infty\}$ is a measurable function that is bounded from above,
	if $h_j $ is a sequence of bounded functions and $h_j \downarrow h$, then
	$$
	\env{h_j}{\beta} \downarrow \env{h}{\beta}.
	$$
	\end{itemize}
\end{prop}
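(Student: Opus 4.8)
The plan is to establish part 1 first, then deduce part 2 by an approximation argument. For part 1, the inclusion $\tilde{P}_\beta(h) \leq \env{h}{\beta}$ is immediate since every competitor in the definition of $\tilde P_\beta(h)$ (i.e.\ $\varphi \leq h$ everywhere) is also a competitor in the definition of $\env{h}{\beta}$ (i.e.\ $\varphi \leq h$ quasi-everywhere). For the reverse inequality, the key point is that $\beta$ admits a bounded $\beta$-psh potential $\rho$, so by Lemma \ref{lem: pluripolar} any locally pluripolar set is globally pluripolar with respect to $\PSH{X}{\beta}$. Given $\varphi \in \PSH{X}{\beta}$ with $\varphi \leq h$ quasi-everywhere, let $P$ be the pluripolar set where $\varphi > h$; by Lemma \ref{lem: pluripolar} there is $\psi \in \PSH{X}{\beta}$ with $\psi = -\infty$ on $P$ and, after subtracting a constant, $\psi \leq 0$. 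Then for each $\varepsilon > 0$ the function $\varphi_\varepsilon := (1-\varepsilon)\varphi + \varepsilon\psi$ (using convexity of $\PSH{X}{\beta}$ after normalizing $\beta$; more precisely $\varphi_\varepsilon + \varepsilon(\text{const})$ is $\beta$-psh since $(1-\varepsilon)(\beta + dd^c\varphi) + \varepsilon(\beta + dd^c\psi) \geq 0$) lies in $\PSH{X}{\beta}$, equals $-\infty$ on $P$, and is $\leq (1-\varepsilon)h + \varepsilon\|\psi\|$-type bound off $P$; one checks $\varphi_\varepsilon \leq h$ everywhere after the appropriate normalization, hence $\varphi_\varepsilon \leq \tilde P_\beta(h)$. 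Letting $\varepsilon \to 0$ gives $\varphi \leq \tilde P_\beta(h)$ outside $P$, and taking the sup over $\varphi$ and then the usc regularization yields $\env{h}{\beta} \leq \tilde P_\beta(h)$, using Lemma \ref{lem: neg} to control the regularization. The decreasing-limit statement for bounded $h_j \downarrow h$ then follows from a standard Dini-type / Hartogs argument: $\env{h_j}{\beta}$ is decreasing in $j$, its limit $\Phi$ is $\beta$-psh (as a decreasing limit, after usc regularization which changes it only on a pluripolar set by Lemma \ref{lem: neg}), satisfies $\Phi \leq h_j$ for all $j$ hence $\Phi \leq h$, so $\Phi \leq \env{h}{\beta}$; the reverse is clear since $h \leq h_j$.

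For part 2, write $h$ as a decreasing limit of bounded functions $g_k := \max(h, -k)$ (these are bounded since $h$ is bounded above), and note $\env{g_k}{\beta} \downarrow \Psi$ for some $\beta$-psh $\Psi$ by part 1 applied to the bounded case, with $\Psi = \env{h}{\beta}$ by the same argument as above (here we need $\env{h}{\beta} \not\equiv -\infty$ only when $S$ is nonempty; Lemma \ref{lem: key1} guarantees the dichotomy, and if $S = \emptyset$ all envelopes are $-\infty$ and the statement is trivial). Now given a general sequence $h_j \downarrow h$ of bounded functions, for each fixed $k$ we have $\max(h_j, -k) \downarrow \max(h, -k) = g_k$ as $j \to \infty$, so by part 1, $\env{\max(h_j,-k)}{\beta} \downarrow \env{g_k}{\beta}$ as $j \to \infty$. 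Since $\env{h_j}{\beta} \leq \env{\max(h_j,-k)}{\beta}$ and $\env{h_j}{\beta}$ is decreasing in $j$ with limit $L := \lim_j \env{h_j}{\beta}$, we get $L \leq \env{g_k}{\beta}$ for every $k$, hence $L \leq \Psi = \env{h}{\beta}$. Conversely $\env{h}{\beta} \leq \env{h_j}{\beta}$ for all $j$ since $h \leq h_j$, so $\env{h}{\beta} \leq L$. Thus $L = \env{h}{\beta}$, which is the claim.

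The main obstacle I expect is the reverse inequality $\env{h}{\beta} \leq \tilde P_\beta(h)$ in part 1, i.e.\ showing that allowing "$\leq h$ quasi-everywhere" rather than "$\leq h$ everywhere" does not enlarge the envelope. This is exactly where the hypothesis of a bounded $\beta$-psh potential is essential (via Lemma \ref{lem: pluripolar}), and one must be careful with the normalization of $\beta$ so that the convex-combination trick $\varphi_\varepsilon = (1-\varepsilon)\varphi + \varepsilon\psi$ stays in $\PSH{X}{\beta}$ — since $\beta$ need not be positive, $(1-\varepsilon)\beta + \varepsilon\beta = \beta$ causes no trouble, but one should phrase it directly as $\beta + dd^c\varphi_\varepsilon = (1-\varepsilon)(\beta+dd^c\varphi) + \varepsilon(\beta+dd^c\psi) \geq 0$, so no normalization is actually needed. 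A secondary subtlety is ensuring measurability of the relevant sets and that the sup defining the envelope is taken over a nonempty family precisely when $S \neq \emptyset$; Lemma \ref{lem: key1} handles the boundedness-above needed to apply Lemma \ref{lem: neg} to the regularization.
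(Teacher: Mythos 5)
Your argument is correct and rests on the same essential ingredients as the paper's proof (Lemma~\ref{lem: pluripolar} together with the interpolation $(1-\varepsilon)\varphi + \varepsilon\psi$), but you package them differently. In part~1 you apply the perturbation to each individual quasi-everywhere competitor $\varphi$ and conclude $\varphi \leq \tilde P_\beta(h)$ directly, whereas the paper first identifies the quasi-everywhere envelope $G = \env{h}{\beta}$ with $\tilde P_\beta(\max(h,G))$ and then perturbs that single function; your version avoids the auxiliary object $\max(h,G)$ and is slightly more direct. Two small cleanups are worth making: the normalization of $\psi$ must use the lower bound of $h$ --- you need $\psi \leq \inf_X h$, not merely $\psi \leq 0$, so that $(1-\varepsilon)\varphi + \varepsilon\psi \leq h$ off $P$ --- and the passage from ``$\varphi \leq \tilde P_\beta(h)$ quasi-everywhere for each $\varphi$'' to ``$\env{h}{\beta} \leq \tilde P_\beta(h)$'' is justified because two quasi-psh functions ordered almost everywhere are ordered everywhere (sub-mean value property), rather than by Lemma~\ref{lem: neg}, which serves the distinct purpose of showing $G^*$ is itself a quasi-everywhere competitor. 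In part~2 your detour through the truncations $g_k = \max(h,-k)$ and the resulting double limit, while correct, is unnecessary: the paper's direct argument suffices --- the decreasing limit $L = \lim_j \env{h_j}{\beta}$ satisfies $L \leq h_j$ quasi-everywhere for each $j$, hence $L \leq h$ quasi-everywhere (a countable union of pluripolar sets is pluripolar), and if $L \not\equiv -\infty$ then $L$ is $\beta$-psh, so $L \leq \env{h}{\beta}$ follows immediately from the definition of the envelope.
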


\begin{proof}
	\textbf{Proof of 1).}
	
	The proof follows  \cite[Proposition 2.2]{GLZ19}.
	
	Denote
	$$
	G := \sup \left\{ \varphi \in \PSH{X}{\beta} : \varphi \leq h \quad \text{quasi-everywhere} \right\}.
	$$
	If the set
	$$
	\left\{ \varphi \in \PSH{X}{\beta} : \varphi \leq h \quad \text{quasi-everywhere} \right\}
	$$
	is empty, then the set
	$$
	\left\{ \varphi \in \PSH{X}{\beta} : \varphi \leq h \right\}
	$$
	is also empty, and we have
	$$
	\tilde P_{\beta}(h) = \sup \left\{ \varphi \in \PSH{X}{\beta} : \varphi \leq h \quad \text{quasi-everywhere} \right\} = -\infty.
	$$
	
	Now assume $G \neq -\infty$. By Lemma \ref{lem: key1}, $G$ is bounded above. Using Choquet's Lemma, there exists a sequence $\{\varphi_j\} \subset \PSH{X}{\beta}$ such that $\varphi_j \leq h$ quasi-everywhere and
	$$
P_\beta(h)=	G^* = \left( \sup_j \varphi_j \right)^*.
	$$
	By Lemma \ref{lem: neg}, $G^* \in \PSH{X}{\beta}$ and $G^* \leq h$ quasi-everywhere. Therefore, by definition, $G^* = G$.
	
	We now show that
	$$
\tilde P_{\beta}({\max(h, G)})= G = G^*.
	$$
	On one hand, since $G = G^* \leq \max(h, G)$, it follows that
	$$
	G = G^* \leq \tilde P_{\beta}({\max(h, G)}).
	$$
	On the other hand, $\tilde P_{\beta}({\max(h, G)})\leq \max(h, G)$ quasi-everywhere. Since $\max(h, G) = h$ quasi-everywhere, we have
	$$
	\tilde P_{\beta}({\max(h, G)})\leq h \quad \text{quasi-everywhere},
	$$
	and thus
	$$
	\tilde P_{\beta}({\max(h, G)}) \leq G \quad \text{by definition}.
	$$
	Therefore,
	$$
\tilde P_{\beta}({\max(h, G)}) = G = G^*.
	$$
	
	Next, we prove that
	$$
	\tilde P_{\beta}({\max(h, G)})=\tilde P_{\beta}(h).
	$$
	Define
	$$
	E := \left\{ \tilde P_{\beta}({\max(h, G)}) \geq \max(h, G) \right\} \cup \left\{ \max(h, G) \neq h \right\},
	$$
	which is pluripolar. By Lemma \ref{lem: pluripolar}, there exists $\phi \in \PSH{X}{\beta}$ such that $E \subseteq \{\phi = -\infty\}$. For $\lambda \in (0, 1)$, we have
	$$
	\lambda \phi + (1-\lambda) \tilde P_{\beta}({\max(h, G)}) \in \PSH{X}{\beta}
	$$
	and is bounded above by $h$ (since $h$ is bounded). Thus, $\env{h}{\beta}$ is not identically $-\infty$. Letting $\lambda \to 0$, we get
	$$
	\tilde P_{\beta}({\max(h, G)})\leq \tilde P_{\beta}(h)\quad \text{quasi-everywhere}.
	$$
	Since both sides are $\beta$-psh, this inequality holds everywhere. The converse inequality is immediate. Therefore,
	$$
		\tilde P_{\beta}({\max(h, G)})=\tilde P_{\beta}(h).
	$$
	This completes the proof of the first part.
	
	\textbf{Proof of the decreasing property:}
	
	On one hand, by definition,
	$$
	\env{h_j}{\beta} \geq \env{h}{\beta}.
	$$
	On the other hand, $\env{h_j}{\beta} \leq h_j$ quasi-everywhere, so
	$$
	\lim_{j \to +\infty} \env{h_j}{\beta} \leq h \quad \text{quasi-everywhere}.
	$$
	If $\lim_{j \to +\infty} \env{h_j}{\beta} \in \PSH{X}{\beta}$, then by the first part of this theorem,
	$$
	\lim_{j \to +\infty} \env{h_j}{\beta} \leq \env{h}{\beta}.
	$$
	If $\lim_{j \to +\infty} \env{h_j}{\beta} = -\infty$, the conclusion is immediate.
	
	\textbf{Proof of 2).}
	
	The proof of the decreasing property for part 2) is identical to the second part of the proof of part 1).
\end{proof}

\begin{rem}\label{rem: equ of env}
	From above proof, we have already get that  for  measurable function $h: X \to \mathbb{R}\cup \{-\infty\}$ which is bounded from above,
$$P_\beta(h)= \sup \left\{ \varphi \in \PSH{X}{\beta} : \varphi \leq h \quad \text{quasi-everywhere} \right\} .$$
	\end{rem}

\begin{defn}
\label{defn: new envelope}
Assume $\rho$ is a bounded $\beta$-psh function. We define $u$ to be a $(\beta + dd^c \rho)$-psh function if $u + \rho$ is a $\beta$-psh function. Denote the set of all $(\beta + dd^c \rho)$-psh functions as $\PSH{X}{\beta + dd^c \rho}$.

If $\rho$ is a continuous $\beta$-psh function, then
$$
\PSH{X}{\beta + dd^c \rho} \subseteq \text{USC}(X),
$$
where $\text{USC}(X)$ denotes the set of all upper semicontinuous (u.s.c.) functions on $X$. For any function $h$, we have
$$
\left( \sup \left\{ \varphi \in \PSH{X}{\beta + dd^c \rho} : \varphi \leq h \right\} \right)^* = \left( \sup \left\{ \psi \in \PSH{X}{\beta} : \psi \leq h + \rho \right\} \right)^* - \rho.
$$

If $\rho$ is only assumed to be bounded, $(\beta + dd^c \rho)$-psh functions need not be upper semicontinuous (u.s.c.). Therefore, the upper semicontinuous regularization process is not suitable in this case.  Motivated by Remark \ref{rem: equ of env}, we define the $(\beta + dd^c \rho)$-psh envelope of $h$ as
$$
\env{h}{\beta + dd^c \rho} := \sup \left\{ \varphi \in \PSH{X}{\beta + dd^c \rho} : \varphi \leq h \quad \text{quasi-everywhere} \right\}.
$$
\end{defn}
Fortunately, one can prove that the envelope $P_{\beta + dd^c\rho}(h)$ shares similar properties as stated in Proposition \ref{prop: representation}.
\begin{thm} \label{thm: key}
	Let  $h$ be  a  measurable function that is bounded from above, then we have
	$$
	\env{h}{\beta + dd^c \rho} = \sup \left\{ \varphi \in \PSH{X}{\beta + dd^c \rho} : \varphi \leq h \quad \text{quasi-everywhere} \right\} = \env{h + \rho}{\beta} - \rho.
	$$
	
	In particular:
	\begin{enumerate}
		\item If $h_j \downarrow h$, where each $h_j$ is bounded from above, then
		$$
		\env{h_j}{\beta + dd^c \rho} \downarrow \env{h}{\beta + dd^c \rho}.
		$$
		
		\item If the set
		$$
		\left\{ \varphi \in \PSH{X}{\beta + dd^c \rho} : \varphi \leq h \right\}
		$$
		is non-empty, then $\env{h + \rho}{\beta} \in \PSH{X}{\beta}$, and thus
		$$
		\env{h}{\beta + dd^c \rho} \in \PSH{X}{\beta + dd^c \rho}.
		$$
	\end{enumerate}
\end{thm}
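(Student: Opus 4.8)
The plan is to reduce the whole statement to the already-established $\beta$-envelope theory by translating with the bounded function $\rho$. The point is that, since $\rho$ is bounded, it is in particular finite-valued, so for every function $\varphi$ on $X$ one has $\varphi\le h$ quasi-everywhere if and only if $\varphi+\rho\le h+\rho$ quasi-everywhere, and by the very definition of $\PSH{X}{\beta+dd^c\rho}$ one has $\varphi\in\PSH{X}{\beta+dd^c\rho}$ if and only if $\varphi+\rho\in\PSH{X}{\beta}$. First I would use these two equivalences to observe that $\varphi\mapsto\varphi+\rho$ is a bijection from $\mathcal F_\rho:=\{\varphi\in\PSH{X}{\beta+dd^c\rho}:\varphi\le h\ \text{q.e.}\}$ onto $\mathcal F:=\{\psi\in\PSH{X}{\beta}:\psi\le h+\rho\ \text{q.e.}\}$, with $\mathcal F_\rho=\emptyset$ exactly when $\mathcal F=\emptyset$. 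Taking the pointwise supremum over each family and using again that $\rho$ is finite-valued, this gives the identity of functions
\[
\env{h}{\beta+dd^c\rho}+\rho=\sup\mathcal F_\rho+\rho=\sup_{\varphi\in\mathcal F_\rho}(\varphi+\rho)=\sup\mathcal F,
\]
where $\env{h}{\beta+dd^c\rho}$ is taken in the sense of Definition \ref{defn: new envelope} and both sides are $-\infty$ in the empty case by convention.

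Next I would note that $h+\rho$ is measurable and bounded above (as $h\le C$ and $\rho\le\|\rho\|_{L^\infty}$), so Remark \ref{rem: equ of env} applies to it and yields $\sup\mathcal F=\env{h+\rho}{\beta}$; this is precisely the content distilled from the proof of Proposition \ref{prop: representation}, which in turn rests on Lemma \ref{lem: neg}, Choquet's lemma and Lemma \ref{lem: pluripolar}. Combining with the previous display gives
\[
\env{h}{\beta+dd^c\rho}=\env{h+\rho}{\beta}-\rho,
\]
which is the asserted formula, valid also when $\mathcal F=\emptyset$ since then both sides equal $-\infty$.

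The two numbered consequences then follow quickly. For (1), if $h_j\downarrow h$ with each $h_j$ bounded above, then $h_j+\rho\downarrow h+\rho$ with each $h_j+\rho$ bounded above, so Proposition \ref{prop: representation}(2) gives $\env{h_j+\rho}{\beta}\downarrow\env{h+\rho}{\beta}$; subtracting the fixed function $\rho$ and invoking the formula just proved gives $\env{h_j}{\beta+dd^c\rho}\downarrow\env{h}{\beta+dd^c\rho}$. For (2), if $\{\varphi\in\PSH{X}{\beta+dd^c\rho}:\varphi\le h\}\neq\emptyset$, then adding $\rho$ shows $\{\psi\in\PSH{X}{\beta}:\psi\le h+\rho\}\neq\emptyset$, hence $\mathcal F\neq\emptyset$; by Lemma \ref{lem: key1} (applied to $h+\rho$, which is bounded above) $\sup\mathcal F$ is bounded above, and then by Choquet's lemma together with Lemma \ref{lem: neg} its u.s.c. regularization $\env{h+\rho}{\beta}$ lies in $\PSH{X}{\beta}$ (and in fact equals $\sup\mathcal F$). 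Therefore $\env{h}{\beta+dd^c\rho}=\env{h+\rho}{\beta}-\rho$ with $\env{h+\rho}{\beta}\in\PSH{X}{\beta}$, which is exactly the statement that $\env{h}{\beta+dd^c\rho}\in\PSH{X}{\beta+dd^c\rho}$.

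I expect no deep obstacle here: the only delicate point is the passage between the two suprema in the first step. Because $\rho$ need not be u.s.c., one cannot route the argument through the u.s.c. regularization used in the continuous case of Definition \ref{defn: new envelope}; it is precisely the quasi-everywhere reformulation of the $\beta$-envelope in Remark \ref{rem: equ of env} (i.e.\ the fact that for $h$ bounded above the supremum defining $\env{h}{\beta}$ is already $\beta$-psh without regularization) that legitimizes the translation by $\rho$. Everything else — measurability and boundedness above of $h+\rho$, and careful tracking of the $\sup\emptyset=-\infty$ convention — is routine bookkeeping.
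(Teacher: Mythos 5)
Your proof is correct and follows essentially the same route as the paper: translate by $\rho$ to identify the two families (using that $\rho$ is finite-valued, so "$\le$ q.e." is preserved), invoke Remark \ref{rem: equ of env} to identify the translated supremum with $\env{h+\rho}{\beta}$, and then deduce (1) from Proposition \ref{prop: representation}(2) and (2) from Lemma \ref{lem: key1} plus Choquet/Lemma \ref{lem: neg}. You merely spell out the bijection and the $\sup$-translation step more explicitly than the paper does, but the substance is identical.
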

\begin{proof}
	By Remark \ref{rem: equ of env}, we have
	\begin{equation} \label{eq:key-proof}
	\begin{split}
	\env{h}{\beta + dd^c \rho}
	&= \sup \left\{ \varphi \in \PSH{X}{\beta + dd^c \rho} : \varphi \leq h \quad \text{quasi-everywhere} \right\} \\
	&= \sup \left\{ \psi \in \PSH{X}{\beta} : \psi \leq h + \rho \quad \text{quasi-everywhere} \right\} - \rho \\
	&= \env{h + \rho}{\beta} - \rho.
	\end{split}
	\end{equation}
	
\textbf{Proof of (1):}	If $h_j \downarrow h$, where each $h_j$ is bounded from above, then by part 2) of Proposition \ref{prop: representation},
	$$
	\env{h_j + \rho}{\beta} \downarrow \env{h + \rho}{\beta}.
	$$
	Therefore,
	$$
	\env{h_j}{\beta + dd^c \rho} = \env{h_j + \rho}{\beta} - \rho \downarrow \env{h + \rho}{\beta} - \rho = \env{h}{\beta + dd^c \rho}.
	$$

\textbf{Proof of (2):}	If the set
	$$
	\left\{ \varphi \in \PSH{X}{\beta + dd^c \rho} : \varphi \leq h \right\}
	$$
	is non-empty, then the set
	$$
	\left\{ \psi \in \PSH{X}{\beta} : \psi \leq h + \rho \right\}
	$$
	is also non-empty. By Proposition \ref{prop: representation}, we have $\env{h + \rho}{\beta} \in \PSH{X}{\beta}$, and thus
	$$
	\env{h}{\beta + dd^c \rho} = \env{h + \rho}{\beta} - \rho \in \PSH{X}{\beta + dd^c \rho}.
	$$
	
	The proof is completed.
\end{proof}

If $h$ is bounded, then by Choquet's lemma and the condition that $\rho$ is bounded, we have
$$
\env{h}{\beta} \in \PSH{X}{\beta} \cap L^{\infty}(X).
$$
Thus, $(\beta + dd^c \env{h}{\beta})^n$ is well-defined.

\begin{lem} \label{lem: put_no_mass1}
	If $h$ is a bounded Lebesgue measurable function, then $(\beta + dd^c \env{h}{\beta})^n$ puts no mass on $L(h) \cap \{ \env{h}{\beta} < h \}$, where $L(h)$ is the lower semi-continuity set of $h$.
\end{lem}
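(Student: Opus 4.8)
The plan is to argue by contradiction via a local push‑up argument, in the spirit of the envelope techniques of \cite{GLZ19}. Write $u:=\env{h}{\beta}$. As recalled just before the statement, $u\in\PSH{X}{\beta}\cap L^{\infty}(X)$, so $u$ is upper semicontinuous and $(\beta+dd^c u)^n$ is a well‑defined Radon measure; moreover by Proposition \ref{prop: representation}(1), $u$ is the upper semicontinuous regularization of $\sup\{\varphi\in\PSH{X}{\beta}:\varphi\le h\ \text{quasi-everywhere}\}$, i.e. it dominates every $\beta$-psh function that is $\le h$ quasi-everywhere. Set $D:=L(h)\cap\{u<h\}$ and suppose, for contradiction, that $(\beta+dd^c u)^n(D)>0$.

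The first step is a localization. For $x\in D$ one has $h(x)>u(x)$, and $h$ is lower semicontinuous at $x$ while $u$ is upper semicontinuous everywhere; hence there is a coordinate ball $B_x=B(x,r_x)$ lying in a chart on which $\beta=dd^c\psi_x$ with $\psi_x$ smooth, together with $\delta_x>0$, such that on $B_x$ one has $h>u+\delta_x$, while $\operatorname{osc}_{\overline{B_x}}\psi_x$ and $\sup_{B_x}(u-u(x))$ can be made as small as we wish relative to $\delta_x$ by shrinking $r_x$. Covering $D$ by the balls $B(x,r_x/2)$, passing to a countable subcover and using $(\beta+dd^c u)^n(D)>0$, I obtain one such ball together with a concentric half‑ball $B':=B(x_0,R/2)\Subset B:=B(x_0,R)$ for which $(\beta+dd^c u)^n(B')>0$ and the above estimates hold on $B$. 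On $B$ put $\varphi:=u+\psi$, a bounded psh function with $(dd^c\varphi)^n=(\beta+dd^c u)^n$, so $(dd^c\varphi)^n(B')>0$.

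The second step is the push‑up. Let $\hat\varphi$ be the largest psh function on $B$ that is $\le\varphi$ on $B\setminus B'$; its upper semicontinuous regularization $\hat\varphi^{*}$ is psh on $B$, equals $\varphi$ on $B\setminus\overline{B'}$, satisfies $\hat\varphi^{*}\ge\varphi$ on $B$, and is maximal on $B'$ — so, since $(dd^c\varphi)^n(B')>0$, it is not identically $\varphi$ on $B'$ (using the characterization, from \cite{BT82}, of maximal bounded psh functions by vanishing of the Monge–Amp\`ere measure). Define $\tilde u:=\hat\varphi^{*}-\psi$ on $B$ and $\tilde u:=u$ on $X\setminus\overline{B'}$; these agree on the overlap, so $\tilde u\in\PSH{X}{\beta}$, $\tilde u\ge u$ on $X$, and $\tilde u>u$ at some point of $B'$. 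The maximum principle gives $\hat\varphi^{*}\le\sup_{\partial B'}\varphi$ on $\overline{B'}$, and the smallness estimates of the first step turn this into $\tilde u<h$ on $\overline{B'}$; since $\tilde u=u\le h$ quasi-everywhere off $\overline{B'}$, we conclude $\tilde u\le h$ quasi-everywhere on $X$. Hence $\tilde u$ belongs to the family defining $\env{h}{\beta}$, so $\tilde u\le\env{h}{\beta}=u$, contradicting $\tilde u>u$ somewhere. Therefore $(\beta+dd^c u)^n(D)=0$.

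The step I expect to be the main obstacle is the estimate $\tilde u<h$ on $\overline{B'}$: one must control the pushed‑up function $\hat\varphi^{*}$ — which a priori is governed only by its boundary values on $\partial B'$ — everywhere on $\overline{B'}$, and keep it below the obstacle $h$. This is exactly where the hypothesis $x\in L(h)$ is used: lower semicontinuity of $h$ at the centre of the ball, combined with the freedom to shrink the ball so that $\psi$ and $u$ oscillate little, is what forces $h$ to dominate $\hat\varphi^{*}$ on all of $\overline{B'}$. The remaining ingredients (the gluing across $\partial B'$, the maximality of the Perron envelope, and the comparison with the defining supremum of the envelope) are standard pluripotential theory, relying on \cite{BT82} and Proposition \ref{prop: representation}.
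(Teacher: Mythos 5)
Your proposal is correct and follows essentially the same route as the paper: localize at a point where $h$ is lower semicontinuous and $P_\beta(h)$ is strictly below $h$, pass to a smooth local potential for $\beta$, perform a local balayage (Perron/Dirichlet push-up, i.e.\ \cite[Proposition 9.1]{BT82}), and compare the glued function with the obstacle $h$ to conclude that the envelope cannot be pushed up, hence $(\beta+dd^c P_\beta(h))^n$ vanishes near such a point. The only cosmetic differences are that you argue by contradiction (assuming the measure of $L(h)\cap\{P_\beta(h)<h\}$ is positive and extracting a ball of positive measure via a countable cover), whereas the paper directly shows the Monge--Amp\`ere measure vanishes on a ball around each point of that set, and that you use a smooth local potential $\psi$ of $\beta$ directly while the paper factors through a local potential $\hat\rho$ of $\beta+dd^c\rho$ and works with $\hat\rho-\rho$; these produce the same local psh function up to notation.
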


\begin{proof}
	The proof follows from Lemma 2.3 of \cite{GLZ19}.
	
	Since $h$ is bounded, from Proposition \ref{prop: representation}, we have $\env{h}{\beta}=\tilde P_\beta(h)$.
	Denote $\hat{h} := \env{h}{\beta}=\tilde P_\beta(h)$.
	
	 Fix $x_0 \in L(h) \cap \{ \hat{h} < h \}$. By the lower semi-continuity of $h$ and the upper semi-continuity of $\hat{h}$, there exists a ball $B$ such that $x_0 \in B \subseteq \{ \hat{h} < h \}$ and
\begin{align}\label{equ: ineq}
	\max_{\bar{B}} \hat{h} < \min_{\bar{B}} h - \delta
\end{align}
	for some $\delta > 0$.
	
	Since $\beta + dd^c \rho$ is a closed positive current, by the Dolbeault-Grothendieck Lemma, we can choose a Stein open neighborhood $D$ such that $\beta + dd^c \rho = dd^c \hat{\rho}$ on $D$, where $\hat{\rho} \in \text{PSH}({D}) \cap L_{\text{loc}}^1(D)$. 
Thus $\hat{\rho} - \rho$ is smooth and $\hat{\rho}$ is also bounded (shrink $D$ if necessary).
	
	Set $u := \hat{h} + (\hat{\rho} - \rho) \in \text{PSH}({D}) $, assuming $\operatorname{osc}_{\bar{B}} (\hat{\rho} - \rho) < \delta$.
	
	By \cite[Proposition 9.1]{BT82}, there exists a psh function $v$ on $D$ such that
\begin{align*}
	\left\{
	\begin{array}{ll}
	v = u & \text{on } D \setminus B, \\
	v \geq u & \text{on } D, \\
	(dd^c v)^n = 0 & \text{on } B.
	\end{array}
	\right.
\end{align*}
	
	On $\partial B$, we have
\begin{align*}
	v = u = \hat{h} + (\hat{\rho} - \rho) \leq \max_{\bar{B}} \hat{h} + \max_{\bar{B}} (\hat{\rho} - \rho).
\end{align*}
	By the maximum principle, it follows that
\begin{align*}
	v \leq \max_{\bar{B}} \hat{h} + \max_{\bar{B}} (\hat{\rho} - \rho) \quad \text{on } \bar{B}.
\end{align*}
	Thus, by inequality \eqref{equ: ineq},

	\begin{align*}
	v - (\hat{\rho} - \rho) &\leq \max_{\bar{B}} \hat{h} + \max_{\bar{B}} (\hat{\rho} - \rho) - (\hat{\rho} - \rho) \\
	&\leq \min_{\bar{B}} h - \delta + \operatorname{osc}_{\bar{B}} (\hat{\rho} - \rho) \\
	&\leq h \quad \text{on } B.
	\end{align*}

	Now define
\begin{align*}
	w :=
	\begin{cases}
	v - (\hat{\rho} - \rho) & \text{on } B, \\
	\hat{h} & \text{on } X \setminus B.
	\end{cases}
\end{align*}
	
	On the one hand, $w$ is $\beta$-psh on $X$, and $w \leq h$ on $X$. On the other hand, $w = v - (\hat{\rho} - \rho) \geq u - (\hat{\rho} - \rho) = \hat{h}$ on $B$. It follows that $w = \hat{h}$ on $B$, thus
\begin{align*}
	(\beta + dd^c \hat{h})^n = (dd^c v)^n = 0 \quad \text{on } B.
\end{align*}
	
	Therefore, $(\beta + dd^c \env{h}{\beta})^n$ puts no mass on $L(h) \cap \{ \env{h}{\beta} < h \}$.
\end{proof}

\begin{lem} \label{put_no_mass2}
	If $h$ is a quasi-continuous bounded Lebesgue measurable function, then $(\beta + dd^c \env{h}{\beta})^n$ puts no mass on $\{ \env{h}{\beta} < h \}$.
\end{lem}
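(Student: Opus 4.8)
The plan is to upgrade Lemma~\ref{lem: put_no_mass1} by an approximation argument combined with the comparison principle of Corollary~\ref{cor: comparison}. Write $\hat{h}:=\env{h}{\beta}$ and $\mu:=(\beta+dd^c\hat{h})^n$; recall that $\mu$ is a finite measure which does not charge pluripolar sets. Lemma~\ref{lem: put_no_mass1} already gives $\mu\big(L(h)\cap\{\hat{h}<h\}\big)=0$, so the task is to replace the lower semicontinuity set $L(h)$ by all of $X$. The obstruction is that for a merely quasi-continuous $h$ the set $X\setminus L(h)$ need not be pluripolar, so we cannot simply discard it; instead I would push $h$ upward, off a sequence of open sets of capacity tending to $0$, to lower semicontinuous functions $G_j$, apply Lemma~\ref{lem: put_no_mass1} to each $G_j$ (for which $L(G_j)=X$), and then transport the mass information back to $\hat{h}$.

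Concretely, by quasi-continuity choose, for each $j$, an open set $U_j$ with $\text{Cap}_\omega(U_j)<2^{-j}$ such that $h|_{X\setminus U_j}$ is continuous, put $M:=\sup_X h$, and set $g_j:=h$ on $X\setminus U_j$ and $g_j:=M$ on $U_j$. Since $X\setminus U_j$ is closed, $h|_{X\setminus U_j}$ is continuous, and $M$ is the global supremum of $h$, each $g_j$ is lower semicontinuous; hence so is $G_j:=\min(g_1,\dots,g_j)$. One checks that $h\le G_j\le M$, that $G_j=h$ on $X\setminus V_j$ with $V_j:=\bigcap_{i\le j}U_i$ open and $\text{Cap}_\omega(V_j)<2^{-j}$, and that $G_j\downarrow G_\infty$ with $G_\infty=h$ off the set $\bigcap_j V_j$, which is an intersection of open sets of capacity tending to $0$, hence of zero outer capacity, hence pluripolar. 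By Remark~\ref{rem: equ of env} the envelope of a bounded function depends only on its quasi-everywhere class, so $\env{G_\infty}{\beta}=\hat{h}$, and Proposition~\ref{prop: representation}(1) yields $\env{G_j}{\beta}\downarrow\hat{h}$. Since $G_j$ is lower semicontinuous, Lemma~\ref{lem: put_no_mass1} gives that $\mu_j:=(\beta+dd^c\env{G_j}{\beta})^n$ puts no mass on $\{\env{G_j}{\beta}<G_j\}$. Finally, as $\hat{h}=\env{h}{\beta}\le\env{G_j}{\beta}$ everywhere, Corollary~\ref{cor: comparison} applied with $u=\hat{h}\le v=\env{G_j}{\beta}$ yields the pointwise bound $\ind{\{\hat{h}=\env{G_j}{\beta}\}}\mu\le\ind{\{\hat{h}=\env{G_j}{\beta}\}}\mu_j$.

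Now fix $\delta>0$ and set $A:=\{\hat{h}\le h-\delta\}$; since $\{\hat{h}<h\}=\bigcup_{k\ge1}\{\hat{h}\le h-1/k\}$, it suffices to show $\mu(A)=0$ for each $\delta$. Put $D_j:=\{\hat{h}<\env{G_j}{\beta}\}$: as $\env{G_j}{\beta}\downarrow\hat{h}$, the $D_j$ decrease to $\bigcap_j D_j=\emptyset$. Decompose $A\subseteq V_j\cup D_j\cup E_j$ with $E_j:=A\setminus V_j\setminus D_j$. On $E_j$ one has $\hat{h}=\env{G_j}{\beta}$ (since $\hat{h}\le\env{G_j}{\beta}$ always and $E_j\cap D_j=\emptyset$) and $G_j=h$ (since $E_j\cap V_j=\emptyset$), hence $\env{G_j}{\beta}=\hat{h}\le h-\delta=G_j-\delta<G_j$, so $E_j\subseteq\{\hat{h}=\env{G_j}{\beta}\}\cap\{\env{G_j}{\beta}<G_j\}$. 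Therefore $\mu(E_j)\le\mu_j(E_j)\le\mu_j\big(\{\env{G_j}{\beta}<G_j\}\big)=0$, whence $\mu(A)\le\mu(V_j)+\mu(D_j)$ for every $j$. Letting $j\to\infty$, $\mu(V_j)\to0$ because the $V_j$ decrease to a pluripolar set not charged by $\mu$ (alternatively, by the standard estimate $\mu(V_j)\le C\,\text{Cap}_\omega(V_j)$ for the Monge--Amp\`ere measure of a bounded $\beta$-psh function), and $\mu(D_j)\to\mu(\emptyset)=0$ by continuity from above of the finite measure $\mu$. Hence $\mu(A)=0$, and the lemma follows.

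The step I expect to be the crux is the transfer of information across the approximation in the third paragraph: Lemma~\ref{lem: put_no_mass1} controls the mass of $\mu_j$, not of $\mu$, and the contact sets $\{\env{G_j}{\beta}=G_j\}$ move with $j$, so no direct passage to a weak limit would suffice. What makes the scheme work is that the approximants were built so that $\hat{h}\le\env{G_j}{\beta}$ \emph{everywhere} — exactly the hypothesis of Corollary~\ref{cor: comparison} — which lets us dominate $\mu$ by $\mu_j$ precisely on the moving set $\{\hat{h}=\env{G_j}{\beta}\}\supseteq E_j$; once this domination is available, only the manifestly negligible remainders $\mu(V_j)$ (small capacity) and $\mu(D_j)$ (empty intersection) survive. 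A secondary point to be verified with care is the lower semicontinuity of $g_j$, hence of $G_j$: this is where it is essential that the value assigned on $U_j$ is the global supremum $M$ and that $X\setminus U_j$ is closed with $h$ continuous on it.
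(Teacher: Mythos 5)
Your proof is correct, and while the general strategy (approximate $h$ from above by lower semicontinuous functions, apply Lemma~\ref{lem: put_no_mass1} to the approximants, and transfer the conclusion to $h$) is the same as the paper's, the mechanism by which you transfer the zero-mass information is genuinely different. The paper constructs its approximants $h_j$ via Tietze--Urysohn extensions from compact sets, invokes Hartogs' lemma to obtain $\env{h_j}{\beta}\to\env{h}{\beta}$ in capacity, and then uses the semicontinuity result for Monge--Amp\`ere integrals under convergence in capacity (\cite[Theorem~2.6]{DDL23}) to get
$\int_X(h-\hat h)\,d\mu \le \liminf_j \int_X(h_j-\hat h_j)\,d\mu_j=0$.
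You instead build the approximants by a simple ``push to the global supremum'' trick on small open sets, avoid capacity convergence entirely, and replace the integral-semicontinuity machinery by the elementary comparison principle of Corollary~\ref{cor: comparison} (applied on the moving contact set $\{\hat h=\env{G_j}{\beta}\}$) together with continuity from above of the finite measure $\mu$ on the shrinking sets $V_j$ and $D_j$. The decomposition $A\subseteq V_j\cup D_j\cup E_j$ is the right one, and the verification that $g_j$ is lower semicontinuous (which hinges on assigning the \emph{global} sup $M$ on the open set $U_j$, so that values jump up, not down, across $\partial U_j$) is the one nontrivial detail and you handle it correctly. Your route is more self-contained — it does not call on \cite[Theorem~2.6]{DDL23} — at the modest cost of a slightly more intricate three-way set decomposition; the paper's route is shorter on the page but leans on a heavier convergence theorem and a Hartogs argument that is left implicit.
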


\begin{proof}
	The proof follows the idea of \cite[Proposition 2.5]{GLZ19}.
	
	By definition, there exists a sequence of compact sets $(K_l)$ such that $\text{Cap}(X \setminus K_l) \leq 2^{-l}$ and the restriction $h|_{K_l}$ is a continuous function on $K_l$. By taking $\widetilde{K}_j := \cup_{1 \leq l \leq j} K_l$, we can assume the sequence $(K_j)$ is increasing.
	
	Using the Tietze-Urysohn Lemma, there exists a continuous function $H_j$ on $X$ such that $H_j|_{K_j} = h|_{K_j}$, and $H_j$ shares the same bounds as $h$.
	
	Set
	$$
	h_j := \sup \{ H_l \mid l \geq j \}.
	$$
	Then $\{ h_j \}_j$ is lower semicontinuous, quasi-continuous, and uniformly bounded:
	- For fixed $j$, and for $k \geq j$,
	$$
	h_j|_{K_k} = \max \left\{ \sup \{ H_l \mid l \geq k \}, \max (H_j, \dots, H_{k-1}) \right\} = \max \{ h, \max (H_j, \dots, H_{k-1}) \},
	$$
	which implies $h_j|_{K_k}$ is continuous. Since $j$ is arbitrary, $h_j$ is quasi-continuous.
	Since $h$ is bounded, by definition, $\{ h_j \}$ is uniformly bounded.
	
	By \cite[Lemma 2.4]{GLZ19}, $h_j$ converges to $h$ in capacity.
	
	By Hartogs' lemma, $\hat{h}_j \rightarrow \hat{h}$ in capacity. According to \cite[Theorem 2.6]{DDL23} (see also \cite[Theorem 2.6]{LLZ24}),
	$$
	\int_X h (\beta + dd^c \hat{h})^n \leq \liminf_{j \to +\infty} \int_X h_j (\beta + dd^c \hat{h}_j)^n.
	$$
	
	Since $\{ \hat{h}_j \}_j$ decreases to $\hat{h}$ by Proposition \ref{prop: representation}, we have
	$$
	\int_X \hat{h} (\beta + dd^c \hat{h})^n = \lim_{j \to +\infty} \int_X \hat{h}_j (\beta + dd^c \hat{h}_j)^n.
	$$
	
	Thus, by Lemma \ref{lem: neg} and the fact that Monge-Amp\`ere measures put no mass on pluripolar sets, we have
	$$
	\int_X (h - \hat{h}) (\beta + dd^c \hat{h})^n \leq \liminf_{j \to +\infty} \int_X (h_j - \hat{h}_j) (\beta + dd^c \hat{h}_j)^n.
	$$
	
	The right-hand term is equal to $0$ by Lemma \ref{lem: put_no_mass1}. Therefore,
	$$
	\int_X (h - \hat{h}) (\beta + dd^c \hat{h})^n = 0,
	$$
	which implies that $(\beta + dd^c \env{h}{\beta})^n$ puts no mass on $\{ \env{h}{\beta} < h \}$.
\end{proof}

\begin{rem} \label{rem: put_no_mass3}
	If $h$ is a quasi-continuous Lebesgue measurable function with a lower bound, and $\env{h}{\beta} \in \PSH{X}{\beta}$, then Lemma \ref{put_no_mass2} still holds. Actually, as in the proof of \cite[Theorem 2.3]{GL22}, one can replace $h$ by $\min(h, C)$, where $C > \sup_X \env{h}{\beta}$. Then we have
	$$
	\env{h}{\beta} = \env{\min(h, C)}{\beta},
	$$
	and the claim follows from Lemma \ref{put_no_mass2}.
\end{rem}

Now, we are ready to improve Lemma \ref{put_no_mass2} without the bounded condition, following the idea of \cite[Theorem 2.5]{ALS24}.

\begin{thm} \label{thm: put_no_mass4}
	Assume $h$ is quasi-continuous on $X$, and $\env{h}{\beta} \in \PSH{X}{\beta}$. Then $\env{h}{\beta} \leq h$ outside a pluripolar set, and we have
	$$
	\int_{\{ \env{h}{\beta} < h \}} \langle(\beta + dd^c \env{h}{\beta})^n\rangle = 0.
	$$
\end{thm}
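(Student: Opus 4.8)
The plan is to adapt the strategy of \cite[Theorem 2.5]{ALS24}: reduce to the bounded case handled in Lemma \ref{put_no_mass2} by approximating the (possibly unbounded) obstacle $h$ from above by bounded quasi-continuous obstacles, and then transfer the vanishing of mass by a contact-set comparison. Throughout write $\varphi := \env{h}{\beta}$, which by hypothesis lies in $\PSH{X}{\beta}$.

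First I would prove $\varphi \le h$ outside a pluripolar set. The family $\mathcal F = \{\psi \in \PSH{X}{\beta} : \psi \le h \text{ quasi-everywhere}\}$ is stable under taking maxima, so Choquet's lemma provides a sequence $\psi_j \in \mathcal F$ with $(\sup_j \psi_j)^* = \varphi$; since every member of $\mathcal F$ is dominated by the u.s.c.\ function $\varphi$, the $\psi_j$ are uniformly bounded above, and Lemma \ref{lem: neg} gives $\sup_j \psi_j = \varphi$ off a pluripolar set. As each $\psi_j \le h$ quasi-everywhere and a countable union of pluripolar sets is pluripolar, $\varphi \le h$ quasi-everywhere. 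Next I would reduce to the case $h$ bounded above: with $C := 1 + \sup_X \varphi$, replacing $h$ by $\min(h, C)$ changes neither the family $\mathcal F$ (every $\psi \in \mathcal F$ satisfies $\psi \le \varphi < C$) nor the set $\{\varphi < h\}$ (since $\varphi < C$ everywhere), so $\env{\min(h,C)}{\beta} = \varphi$ and we may assume $h \le C$.

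The main construction is as follows. For $k \in \mathbb{N}$ put $h_k := \max(h, \rho - k)$ and $w_k := \env{h_k}{\beta}$. Since $\rho - k \in \PSH{X}{\beta}$ and $h$ is quasi-continuous and bounded above, $h_k$ is bounded and quasi-continuous, hence $w_k \in \PSH{X}{\beta} \cap L^\infty(X)$, with $w_k \ge \max(\varphi, \rho - k)$ and, by Proposition \ref{prop: representation}(2), $w_k \downarrow \varphi$. The key input is Lemma \ref{put_no_mass2}: $(\beta + dd^c w_k)^n$ puts no mass on $\{w_k < h_k\}$. Now fix $k$, and for $m \ge k$ set $\varphi_m := \max(\varphi, \rho - m)$, so $\varphi_m \le w_k$. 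On $E_m := \{\varphi > \rho - m\} \cap \{\varphi < h\}$ I would split according to whether $\varphi_m = w_k$ or $\varphi_m < w_k$. On $E_m \cap \{\varphi_m = w_k\}$ one has $w_k = \varphi_m = \varphi < h \le h_k$, so this set lies in $\{w_k < h_k\}$, and Corollary \ref{cor: comparison} (using $\varphi_m \le w_k$) together with the no-mass statement forces $(\beta + dd^c \varphi_m)^n$ to vanish there. On $E_m \cap \{\varphi_m < w_k\}$ I would use that $\varphi_m = \varphi$ on $\{\varphi > \rho - m\}$ and the identity $\ind{\{\varphi > \rho - m\}}(\beta + dd^c \varphi_m)^n = \ind{\{\varphi > \rho - m\}} \langle \beta_\varphi^n \rangle$ from \eqref{eq:22} to bound this part by $\langle \beta_\varphi^n \rangle(\{\varphi < w_k\})$. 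Hence $(\beta + dd^c \varphi_m)^n(E_m) \le \langle \beta_\varphi^n \rangle(\{\varphi < w_k\})$ for all $m \ge k$; letting $m \to \infty$, so that $\ind{\{\varphi > \rho - m\}}(\beta + dd^c\varphi_m)^n \uparrow \langle \beta_\varphi^n \rangle$, gives $\langle \beta_\varphi^n \rangle(\{\varphi < h\}) \le \langle \beta_\varphi^n \rangle(\{\varphi < w_k\})$; and letting $k \to \infty$, so that $\{\varphi < w_k\}$ decreases to $\{\varphi < \varphi\} = \emptyset$, yields $\langle \beta_\varphi^n \rangle(\{\varphi < h\}) = 0$.

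The hard part is the core estimate above. When $\varphi$ does not have full Monge-Amp\`ere mass, the bounded measures $(\beta + dd^c \varphi_m)^n$ carry the escaping mass $\int_X \beta^n - \int_X \langle \beta_\varphi^n \rangle > 0$ on $\{\varphi \le \rho - m\}$, which does not disappear as $m \to \infty$; the estimate nonetheless goes through because the comparison with $w_k$ is performed only after restricting to the plurifine open set $\{\varphi > \rho - m\}$, where $(\beta + dd^c \varphi_m)^n$ already coincides with the genuine non-pluripolar measure $\langle \beta_\varphi^n \rangle$, so the spurious mass never enters. The remaining points — that $\env{\min(h,C)}{\beta} = \varphi$ and that $\env{h_k}{\beta} \downarrow \env{h}{\beta}$ — are routine consequences of the definition of the envelope and Proposition \ref{prop: representation}.
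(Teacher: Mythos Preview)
Your argument is correct and, after the common reduction to bounded-above $h$ and truncation $h_k=\max(h,\rho-k)$, it departs from the paper's route. The paper passes from the bounded envelopes $w_k=\env{h_k}{\beta}$ to $\varphi$ by weak convergence: it introduces the auxiliary quasi-continuous test function $f_k=[\max(h,w_k)-w_k][\max(\varphi,\rho-C)-(\rho-C)]$, applies Bedford--Taylor convergence to $\max(w_j,\rho-C)$, and invokes the lower-semicontinuity result \cite[Theorem 2.6]{DDL23} to push $\int f_k(\beta+dd^c\max(w_j,\rho-C))^n=0$ to the limit. You instead work pointwise with the canonical cutoffs $\varphi_m=\max(\varphi,\rho-m)$: on $E_m\cap\{\varphi_m=w_k\}$ the contact-set inequality of Corollary~\ref{cor: comparison} transfers the vanishing from $(\beta+dd^cw_k)^n$ to $(\beta+dd^c\varphi_m)^n$, while on $E_m\cap\{\varphi_m<w_k\}$ the identity \eqref{eq:22} already identifies $(\beta+dd^c\varphi_m)^n$ with $\langle\beta_\varphi^n\rangle$, giving the clean bound $\langle\beta_\varphi^n\rangle(\{\varphi<h\})\le\langle\beta_\varphi^n\rangle(\{\varphi<w_k\})\to 0$. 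Your approach is more self-contained (no appeal to weak-convergence lower semicontinuity or to \cite{KH09}) and makes the mechanism transparent; the paper's approach is closer to the template of \cite{ALS24} and would adapt more readily if one needed to integrate a general quasi-continuous weight against the limit measure.
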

\begin{proof}
	By Remark \ref{rem: put_no_mass3}, we may assume $h$ is bounded above. Set
	\begin{align*}
	h_j := \max(h, -j),
	\end{align*}
	which is a bounded function. Since $\rho$ is bounded, $\env{h_j}{\beta} \in \PSH{X}{\beta}$. By Lemma \ref{lem: neg}, $\env{h_j}{\beta} \leq h_j$ outside a pluripolar set. Since $\env{h}{\beta} \in \PSH{X}{\beta}$, also by Lemma \ref{lem: neg}, we conclude that $\env{h}{\beta} \leq h$ quasi-everywhere.
	
	Next, we prove the second result.
	
	By Lemma \ref{put_no_mass2} and Remark \ref{rem: put_no_mass3},
	\begin{align*}
	\int_{\{ \env{h_j}{\beta} < h_j \}} (\beta + dd^c \env{h_j}{\beta})^n = 0, \quad \forall j \geq 1.
	\end{align*}
	
	Fix $k \in \mathbb{N}$. For every $j \geq k$, we have $\{ \env{h_k}{\beta} < h \} \subseteq \{ \env{h_j}{\beta} < h_j \}$, which implies
	\begin{align*}
	\int_{\{ \env{h_k}{\beta} < h \}} (\beta + dd^c \env{h_j}{\beta})^n = 0.
	\end{align*}
	
	Fix $C > 0$. From Theorem \ref{thm: plur_prop1},
	\begin{align*}
	\int_{\{ \env{h_k}{\beta} < h \} \cap \{ \env{h}{\beta} > \rho - C \}} (\beta + dd^c \max(\env{h_j}{\beta}, \rho - C))^n = 0.
	\end{align*}
	
	Set
	\begin{align*}
	f_k := \left[ \max(h, \env{h_k}{\beta}) - \env{h_k}{\beta} \right] \times \left[ \max(\env{h}{\beta}, \rho - C) - (\rho - C) \right],
	\end{align*}
	which is a positive, bounded, quasi-continuous function. By \cite[Lemma 4.2]{KH09}, we have
	\begin{align*}
	\int_X f_k (\beta + dd^c \max(\env{h_j}{\beta}, \rho - C))^n = 0, \quad \forall j \geq k.
	\end{align*}
	
	By the Bedford-Taylor convergence theorem,
	\begin{align*}
	(\beta + dd^c \max(\env{h_j}{\beta}, \rho - C))^n \rightarrow (\beta + dd^c \max(\env{h}{\beta}, \rho - C))^n,
	\end{align*}
	which implies
	\begin{align*}
	\int_X f_k (\beta + dd^c \max(\env{h}{\beta}, \rho - C))^n &\leq \liminf_{j \to +\infty} \int_X f_k (\beta + dd^c \max(\env{h_j}{\beta}, \rho - C))^n \\
	&= 0,
	\end{align*}
	thanks to \cite[Theorem 2.6]{DDL23} (see also \cite[Theorem 2.6]{LLZ24}).
	
	The above is equivalent to
	\begin{align*}
	\int_{\{ \env{h_k}{\beta} < h \} \cap \{ \env{h}{\beta} > \rho - C \}} (\beta + dd^c \max(\env{h}{\beta}, \rho - C))^n = 0.
	\end{align*}
	By Theorem \ref{thm: plur_prop1},
	\begin{align*}
	\int_{\{ \env{h_k}{\beta} < h \} \cap \{ \env{h}{\beta} > \rho - C \}} \langle(\beta + dd^c \env{h}{\beta})^n\rangle = 0.
	\end{align*}
	
	Letting $k \to +\infty$ and then $C \to +\infty$, we obtain
	\begin{align*}
	\int_{\{ \env{h}{\beta} < h \}} \langle(\beta + dd^c \env{h}{\beta})^n \rangle= 0.
	\end{align*}
\end{proof}

\begin{cor} \label{cor: contactieq}
	Let $u, v \in \PSH{X}{\beta}$ be such that $\env{u, v}{\beta} \in \PSH{X}{\beta}$. Then
	\begin{align*}
	\langle\beta_{\env{u, v}{\beta}}^n \rangle &\leq \ind{\{\env{u, v}{\beta} = u\}} \langle\beta_u^n \rangle + \ind{\{\env{u, v}{\beta} = v, \env{u, v}{\beta} < u\}} \langle \beta_v^n \rangle.
	\end{align*}
	In particular, if $\mu$ is a positive measure such that $\langle\beta_u^n \rangle \leq \mu$ and $\langle \beta_v^n \rangle \leq \mu$, then $\langle \beta_{\env{u, v}{\beta}}^n \rangle \leq \mu$.
\end{cor}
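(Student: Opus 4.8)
The plan is to set $\phi := \env{u, v}{\beta} = \env{\min(u,v)}{\beta}$ and exploit Theorem \ref{thm: put_no_mass4} with the test function $h = \min(u,v)$. Since $u,v\in\PSH{X}{\beta}$ are quasi-continuous and bounded above on the compact manifold $X$, their minimum $h=\min(u,v)$ is quasi-continuous and bounded above; moreover $\env{h}{\beta}=\phi\in\PSH{X}{\beta}$ by hypothesis. Hence Theorem \ref{thm: put_no_mass4} applies and gives two facts: first, $\phi\le\min(u,v)$ outside a pluripolar set $N$, so in particular $\phi\le u$ and $\phi\le v$ quasi-everywhere; second,
$$
\int_{\{\phi<\min(u,v)\}}\langle\beta_\phi^n\rangle=0 .
$$
Because $\langle\beta_\phi^n\rangle$ does not charge the pluripolar set $N$ either, the whole mass of $\langle\beta_\phi^n\rangle$ sits on the contact set $\{\phi=\min(u,v)\}$.

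Next I would decompose this contact set. If $x\in\{\phi=\min(u,v)\}$ then either $u(x)\le v(x)$, so $\phi(x)=u(x)$ and $x\in\{\phi=u\}$, or $v(x)<u(x)$, so $\phi(x)=v(x)$ and $x\in\{\phi=v\}\cap\{\phi<u\}$; these two sets are disjoint. Consequently
$$
\langle\beta_\phi^n\rangle=\ind{\{\phi=u\}}\langle\beta_\phi^n\rangle+\ind{\{\phi=v,\ \phi<u\}}\langle\beta_\phi^n\rangle .
$$
To control each piece I invoke Corollary \ref{cor: comparison}: applying it to the pair $(\phi,u)$ (legitimate since $\phi\le u$ quasi-everywhere) yields $\ind{\{\phi=u\}}\langle\beta_\phi^n\rangle\le\ind{\{\phi=u\}}\langle\beta_u^n\rangle$, and applying it to the pair $(\phi,v)$ yields $\ind{\{\phi=v\}}\langle\beta_\phi^n\rangle\le\ind{\{\phi=v\}}\langle\beta_v^n\rangle$, which restricts to $\ind{\{\phi=v,\ \phi<u\}}\langle\beta_\phi^n\rangle\le\ind{\{\phi=v,\ \phi<u\}}\langle\beta_v^n\rangle$. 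Substituting these two bounds into the displayed identity gives exactly
$$
\langle\beta_{\env{u,v}{\beta}}^n\rangle\le\ind{\{\env{u,v}{\beta}=u\}}\langle\beta_u^n\rangle+\ind{\{\env{u,v}{\beta}=v,\ \env{u,v}{\beta}<u\}}\langle\beta_v^n\rangle .
$$
For the final assertion, if $\langle\beta_u^n\rangle\le\mu$ and $\langle\beta_v^n\rangle\le\mu$, then the right-hand side is bounded by $\ind{\{\phi=u\}}\mu+\ind{\{\phi=v,\ \phi<u\}}\mu\le\mu$, since the two indicator sets are disjoint.

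The one step requiring care is the first: one must make sure that $\min(u,v)$ genuinely satisfies the hypotheses of Theorem \ref{thm: put_no_mass4}, i.e. that quasi-continuity of $\beta$-psh functions and its stability under finite minima are in force, and that the pluripolar set where $\phi$ might exceed $\min(u,v)$ carries no non-pluripolar Monge--Amp\`ere mass. Once one knows that $\langle\beta_\phi^n\rangle$ is carried by the contact set, the remainder is a routine disjoint splitting of that set combined with the comparison inequality of Corollary \ref{cor: comparison}. (Alternatively, the statement is exactly \cite[Lemma 2.9]{DDL23} transcribed to the present setting.)
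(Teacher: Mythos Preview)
Your proposal is correct and follows essentially the same approach as the paper's own proof: both invoke Theorem \ref{thm: put_no_mass4} to conclude that $\langle\beta_{\env{u,v}{\beta}}^n\rangle$ is supported on the contact set $\{\env{u,v}{\beta}=\min(u,v)\}$, split this set disjointly into $K_u=\{\env{u,v}{\beta}=u\}$ and $K_v=\{\env{u,v}{\beta}=v,\ \env{u,v}{\beta}<u\}$, and then apply Corollary \ref{cor: comparison} on each piece. Your remark about verifying the quasi-continuity hypothesis for $\min(u,v)$ is appropriate and is the only detail the paper leaves implicit.
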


\begin{proof}
	By Theorem \ref{thm: put_no_mass4}, $\langle\beta_{\env{u, v}{\beta}}^n\rangle$ is supported on the contact set $ K := K_u \cup K_v $, where
	\begin{align*}
	K_u &:= \{ \env{u, v}{\beta} = u \}, \\
	K_v &:= \{ \env{u, v}{\beta} = v \} \cap \{ \env{u, v}{\beta} < u \}.
	\end{align*}
	
	From Corollary \ref{cor: comparison}, we have
	\begin{align*}
	\ind{K_u} \langle \beta_{\env{u, v}{\beta}}^n \rangle &\leq \ind{K_u} \langle \beta_u^n \rangle, \\
	\ind{K_v} \langle \beta_{\env{u, v}{\beta}}^n \rangle&\leq \ind{K_v} \langle\beta_v^n \rangle.
	\end{align*}
	
	Adding these two inequalities, we obtain
	\begin{align*}
	\langle\beta_{\env{u, v}{\beta}}^n\rangle &= \ind{K_u} \langle\beta_{\env{u, v}{\beta}}^n \rangle + \ind{K_v} \langle \beta_{\env{u, v}{\beta}}^n \rangle\\
	&\leq \ind{K_u} \langle\beta_u^n \rangle + \ind{K_v}\langle \beta_v^n \rangle,
	\end{align*}
	which completes the proof.
	
	For the second part, if $\mu$ is a positive measure such that $\beta_u^n \leq \mu$ and $\beta_v^n \leq \mu$, then by the above inequality,
	\begin{align*}
	\langle\beta_{\env{u, v}{\beta}}^n \rangle &\leq \ind{K_u} \langle \beta_u^n \rangle + \ind{K_v} \langle \beta_v^n \rangle \\
	&\leq \ind{K_u} \mu + \ind{K_v} \mu \\
	&= \mu.
	\end{align*}
\end{proof}

\begin{rem}
	The corollary above is true if we replace $\beta$ by $\beta + dd^c \rho$ and $\PSH{X}{\beta}$ by $\PSH{X}{\beta + dd^c \rho}$ due to the equality in Theorem \ref{thm: key}.
\end{rem}

\subsection{The full mass class}


\begin{defn}
	The full mass class $\mathcal{E}(X, \beta)$ is defined as follows:
	\begin{align*}
	\mathcal{E}(X, \beta) &:= \left\{ u \in \PSH{X}{\beta} \mid \lim_{t \to +\infty} (\beta + dd^c \max(u, \rho - t))^n(\{u \leq \rho - t\}) = 0 \right\} \\
	&= \left\{ u \in \PSH{X}{\beta} \mid \int_X \langle(\beta + dd^c u)^n\rangle = \int_X \beta^n \right\}.
	\end{align*}
\end{defn}

The second equation holds because:
\begin{align*}
\int_X \langle(\beta + dd^c u)^n \rangle &= \lim_{t \to +\infty} \int_X \ind{\{u > \rho - t\}} (\beta + dd^c \max(u, \rho - t))^n \\
&= \lim_{t \to +\infty} \left[ \int_X (\beta + dd^c \max(u, \rho - t))^n - \int_X \ind{\{u \leq \rho - t\}} (\beta + dd^c \max(u, \rho - t))^n \right] \\
&= \int_X \beta^n - \lim_{t \to +\infty} (\beta + dd^c \max(u, \rho - t))^n(\{u \leq \rho - t\}).
\end{align*}

Similarly, we define the full mass class for $\beta + dd^c \rho$:

\begin{defn} \label{class2}
	The full mass class $\mathcal{E}(X, \beta + dd^c \rho)$ is defined as follows:
	\begin{align*}
	\mathcal{E}(X, \beta + dd^c \rho) &:= \left\{ u \in \PSH{X}{\beta + dd^c \rho} \mid \lim_{t \to +\infty} (\beta + dd^c \rho + dd^c \max(u, -t))^n(\{u \leq -t\}) = 0 \right\} \\
	&= \left\{ u \in \PSH{X}{\beta + dd^c \rho} \mid \lim_{t \to +\infty} (\beta + dd^c \max(\rho + u, \rho - t))^n(\{u + \rho \leq \rho - t\}) = 0 \right\} \\
	&= \left\{ u \in \PSH{X}{\beta + dd^c \rho} \mid u + \rho \in \mathcal{E}(X, \beta) \right\} \\
	&= \left\{ u \in \PSH{X}{\beta + dd^c \rho} \mid \int_X \langle (\beta + dd^c \rho + dd^c u)^n \rangle= \int_X \beta^n \right\}.
	\end{align*}
\end{defn}

\begin{rem} \label{rem: second_version}
	\begin{enumerate}
		\item From the above computations, Theorem \ref{thm: plur_prop1} and Corollary \ref{cor: comparison} still hold if we replace $\beta$ by $\beta + dd^c \rho$.
		
		\item We have the following:  for  measurable function $h: X \to \mathbb{R}$ which is bounded from above,
		\begin{align*}
		P_{\beta + dd^c \rho}(h) &:= \left( \sup \left\{ \varphi \in \PSH{X}{\beta + dd^c \rho} \mid \varphi \leq h \quad \text{quasi-everywhere} \right\} \right) \\
		&= \left( \sup \left\{ \widetilde{\varphi} - \rho \mid \widetilde{\varphi} \in \PSH{X}{\beta}, \widetilde{\varphi} \leq h + \rho \quad \text{quasi-everywhere} \right\} \right) \\
		&= \left( \sup \left\{ \widetilde{\varphi} \in \PSH{X}{\beta} \mid \widetilde{\varphi} \leq h + \rho \quad \text{quasi-everywhere} \right\} \right) - \rho \\
		&= P_{\beta}(h + \rho) - \rho.
		\end{align*}
		Therefore, by Remark \ref{rem: put_no_mass3}, in Theorem \ref{thm: put_no_mass4}, if we replace $\beta$ by $\beta + dd^c \rho$, the same result holds.
		
		\item (See \cite[Lemma 1.2]{GZ07} or \cite[Lemma 2.4]{LWZ24b} )Assume $\{s_j\}_j$ is a sequence converging to $+\infty$ such that $s_j \leq j$. Then $u \in \mathcal{E}(X, \beta + dd^c \rho)$ if and only if
		$$
		\lim_{j \to +\infty} (\beta + dd^c \rho + dd^c \max(u, -j))^n(\{u \leq -s_j\}) = 0.
		$$

		\item (Comparison Principle, see \cite[Proposition 3.11]{LWZ24a}). If $\varphi, \psi \in \mathcal{E}(X, \beta + dd^c \rho)$, then
		$$
		\int_{\{\varphi < \psi\}} \langle(\beta + dd^c \rho + dd^c \psi)^n \rangle\leq \int_{\{\varphi < \psi\}} \langle(\beta + dd^c \rho + dd^c \varphi)^n\rangle.
		$$
	\end{enumerate}
\end{rem}

\begin{prop} \label{prop: chac_class}
	Assume $\rho \in \PSH{X}{\beta} \cap L^{\infty}(X)$. Let $u \in \PSH{X}{\beta + dd^c \rho}$. The following are equivalent:
	\begin{enumerate}
		\item $u \in \mathcal{E}(X, \beta + dd^c \rho)$.
		\item $\env{Au}{\beta + dd^c \rho} \in \PSH{X}{\beta + dd^c \rho}$ for every $A \geq 1$.
	\end{enumerate}
\end{prop}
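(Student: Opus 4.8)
The plan is to write $\theta:=\beta+dd^c\rho$, which is a closed \emph{semipositive} $(1,1)$-current admitting $0\in\PSH{X}{\theta}$ as a bounded potential, and to carry out everything with $\theta$ in place of $\beta$, since all the $\S 2$ tools needed (Theorem~\ref{thm: key}, Theorem~\ref{thm: put_no_mass4}, Theorem~\ref{thm: plurifine_locality_3}, Corollary~\ref{cor: comparison}, and the comparison principle of Remark~\ref{rem: second_version}(4)) are valid in that generality by Remark~\ref{rem: second_version} and the remark after Corollary~\ref{cor: contactieq}. Since $\env{A(u+c)}{\theta}=\env{Au}{\theta}+Ac$ and $\mathcal E(X,\theta)$ is translation invariant, I would normalise $\sup_X u=0$, so that $Au\le u\le 0$ for every $A\ge 1$. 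The structural fact I would exploit throughout is $\theta\ge 0$: for $\phi\in\PSH{X}{\theta}$ and $A\ge 1$ one has $\tfrac1A\phi=\tfrac1A\phi+(1-\tfrac1A)\cdot 0\in\PSH{X}{\theta}$, with $\theta+dd^c(\tfrac1A\phi)=\tfrac1A(\theta+dd^c\phi)+(1-\tfrac1A)\theta\ge(1-\tfrac1A)\theta$. Write $V:=\int_X\theta^n=\int_X\beta^n$.

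\textbf{Direction $(2)\Rightarrow(1)$.} This is the short one. Fix $A\ge 1$ and set $v_A:=\env{Au}{\theta}\in\PSH{X}{\theta}$. Since $Au$ is quasi-continuous and bounded above, Theorem~\ref{thm: put_no_mass4} gives $v_A\le Au$ quasi-everywhere, so $g_A:=\tfrac1A v_A\in\PSH{X}{\theta}$ satisfies $g_A\le u$ quasi-everywhere. From $\theta+dd^c g_A\ge(1-\tfrac1A)\theta\ge 0$ and the monotonicity of the non-pluripolar product, $\langle(\theta+dd^c g_A)^n\rangle\ge(1-\tfrac1A)^n\theta^n$, hence $\int_X\langle(\theta+dd^c g_A)^n\rangle\ge(1-\tfrac1A)^n V$. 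Because $g_A\le u$ quasi-everywhere, monotonicity of the total non-pluripolar mass (\cite{DDL23}, valid in the present semipositive setting) yields $\int_X\langle(\theta+dd^c u)^n\rangle\ge\int_X\langle(\theta+dd^c g_A)^n\rangle\ge(1-\tfrac1A)^n V$; letting $A\to+\infty$ gives $\int_X\langle(\theta+dd^c u)^n\rangle\ge V$, hence $=V$ (the reverse inequality being automatic), i.e.\ $u\in\mathcal E(X,\theta)$.

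\textbf{Direction $(1)\Rightarrow(2)$.} Fix $A\ge 1$. With $u_j:=\max(u,-j)\downarrow u$, each $Au_j$ is bounded, so $w_j:=\env{Au_j}{\theta}\in\PSH{X}{\theta}\cap L^\infty(X)$ and $w_j\downarrow\env{Au}{\theta}$ by Theorem~\ref{thm: key}(1); it suffices to find $C=C(A)$ with $\sup_X w_j\ge -C$ for all large $j$, for then the decreasing $\beta$-psh functions $w_j+\rho$ have suprema $\ge -C+\inf_X\rho>-\infty$, so their limit $\env{Au}{\theta}+\rho$ is $\beta$-psh and $\env{Au}{\theta}\in\PSH{X}{\theta}$. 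To bound $\sup_X w_j$: by Theorem~\ref{thm: put_no_mass4} applied to $Au_j$, the measure $\langle(\theta+dd^c w_j)^n\rangle$, of total mass $V$, is carried by $S_j:=\{w_j=Au_j\}=\{\tfrac1A w_j=u_j\}$; since $\theta+dd^c(\tfrac1A w_j)\ge\tfrac1A(\theta+dd^c w_j)$, monotonicity of the Monge--Amp\`ere operator on bounded functions gives $\langle(\theta+dd^c\tfrac1A w_j)^n\rangle(S_j)\ge A^{-n}V$. Applying the comparison principle to $\tfrac1A w_j\le u_j$,
$$\int_{\{\frac1A w_j<u_j\}}\langle(\theta+dd^c u_j)^n\rangle\le\int_{\{\frac1A w_j<u_j\}}\langle(\theta+dd^c\tfrac1A w_j)^n\rangle= V-\langle(\theta+dd^c\tfrac1A w_j)^n\rangle(S_j)\le(1-A^{-n})V,$$
so $\langle(\theta+dd^c u_j)^n\rangle(S_j)\ge A^{-n}V$. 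Since $u\in\mathcal E(X,\theta)$, $\langle(\theta+dd^c u_j)^n\rangle(\{u\le-j\})\to0$, so for $j$ large $\langle(\theta+dd^c u_j)^n\rangle(S_j\cap\{u>-j\})\ge\tfrac12 A^{-n}V$, which by plurifine locality (Theorem~\ref{thm: plurifine_locality_3}) equals $\langle(\theta+dd^c u)^n\rangle(S_j\cap\{u>-j\})$. Choosing $r=r(A)$ with $\langle(\theta+dd^c u)^n\rangle(\{u\le-r\})<\tfrac12 A^{-n}V$ (possible, as this measure does not charge pluripolar sets), we get $S_j\cap\{u>-j\}\not\subseteq\{u\le-r\}$, so some $x_j\in S_j$ has $u(x_j)>-r$, whence (for $j>r$) $w_j(x_j)=Au_j(x_j)=Au(x_j)>-Ar$. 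Thus $\sup_X w_j\ge-Ar$ for all large $j$, as needed.

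The hard part is the computation just displayed in $(1)\Rightarrow(2)$: the comparison principle forces each contact set $S_j$ to retain a definite fraction $A^{-n}V$ of Monge--Amp\`ere mass, and it is only the full-mass hypothesis on $u$ that lets one transfer that mass into $\{u>-j\}$ and thereby prevent the truncated envelopes from escaping to $-\infty$. In $(2)\Rightarrow(1)$ the one non-elementary input is monotonicity of the total non-pluripolar mass under domination, and it would be worth recording explicitly that this holds in the semipositive, bounded-potential framework at hand.
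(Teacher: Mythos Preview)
Your proof is correct. The direction $(2)\Rightarrow(1)$ is identical to the paper's. For $(1)\Rightarrow(2)$ the paper proceeds slightly differently: rather than invoking the comparison principle, it uses Corollary~\ref{cor: comparison} directly to obtain the pointwise inequality $(\theta+dd^c w_j)^n\le A^n\ind{S_j}(\theta+dd^c u_j)^n$, and then argues by contradiction---if $\sup_X w_j\to-\infty$ then $S_j\subset\{u_j\le -k\}$ for any fixed $k$ and all large $j$, forcing $V\le A^n(\theta+dd^c u_j)^n(\{u_j\le-k\})\to 0$. Your route via the comparison principle recovers the same key estimate $(\theta+dd^c u_j)^n(S_j)\ge A^{-n}V$ and then produces an explicit bound $\sup_X w_j\ge -Ar$ by locating a point in $S_j$ where $u$ is bounded below; this is a bit more hands-on but equally valid. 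One cosmetic remark: when you pass from $g_A\le u$ quasi-everywhere to applying mass monotonicity, it is worth noting (as the paper does) that since both sides are $\theta$-psh the inequality holds everywhere.
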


\begin{proof}
	The proof is mainly based on \cite[Proposition 2.5]{ALS24}.
	
	\textbf{(1) $\Rightarrow$ (2).} Assume $u \in \mathcal{E}(X, \beta + dd^c \rho)$. Define the sequence $u_j := \max(u, -j)$ and set $\varphi_j = \env{Au_j}{\beta + dd^c \rho}$. By Remark \ref{second_version}. (2), we have
	\begin{equation}\label{p1}
	\left\{
	\begin{array}{ll}
	\varphi_j &\leq Au_j \quad \text{quasi-everywhere}, \\
	(\beta + dd^c \rho + dd^c \varphi_j)^n &= \ind{\{\varphi_j = Au_j\}} (\beta + dd^c \rho + dd^c \varphi_j)^n.
	\end{array}
	\right.
	\end{equation}
	
	We have $\frac{1}{A} \varphi_j \leq u_j \quad \text{quasi-everywhere}$. Both sides of this inequality are $\beta + dd^c \rho$-psh, so $\frac{1}{A} \varphi_j \leq u_j \quad \text{everywhere}$. From \eqref{p1} and Corollary \ref{cor: comparison}, we get:
	\begin{equation} \label{p2}
	\begin{split}
	(\beta + dd^c \rho + dd^c \varphi_j)^n &\leq A^n \ind{\{\varphi_j = Au_j\}} (\beta + dd^c \rho + dd^c A^{-1} \varphi_j)^n \\
	&\leq A^n \ind{\{\varphi_j = Au_j\}} (\beta + dd^c \rho + dd^c u_j)^n.
	\end{split}
	\end{equation}
	
	Fix $k \in \mathbb{N}$. Assume by contradiction that $\varphi_j \to -\infty$ as $j \to +\infty$. By a compactness argument, it follows that $\sup_X \varphi_j \to -\infty$. Then there exists $j_0$ such that $X = \{\varphi_j \leq -Ak\}$ for $j \geq j_0$, which implies
	\begin{equation} \label{p3}
	\begin{split}
	\int_X \beta^n &\leq \int_{\{\varphi_j \leq -Ak\}} (\beta + dd^c \rho + dd^c \varphi_j)^n \\
	&\leq A^n \int_{\{u_j \leq -k\}} (\beta + dd^c (\rho + u_j))^n \\
	&= A^n \left( \int_X (\beta + dd^c (\rho + u_j))^n - \int_{\{u_j > -k\}} (\beta + dd^c (\rho + u_j))^n \right).
	\end{split}
	\end{equation}
	The second inequality follows from \eqref{p2}.
	
	For $j \geq k$, by Remark \ref{second_version}. (1), we have
	\begin{equation*}
	\begin{split}
	\int_{\{u_j > -k\}} (\beta + dd^c (\rho + u_j))^n &= \int_{\{u_j > -k\}} (\beta + dd^c \rho + dd^c \max(u_j, -k))^n \\
	&= \int_{\{u > -k\}} (\beta + dd^c (\rho + u_k))^n,
	\end{split}
	\end{equation*}
	thus by \eqref{p3},
	\begin{equation*}
	\begin{split}
	\int_X \beta^n &\leq A^n \left( \int_X (\beta + dd^c (\rho + u_j))^n - \int_{\{u > -k\}} (\beta + dd^c (\rho + u_k))^n \right) \\
	&\leq A^n \left( \int_X \beta^n - \int_{\{u > -k\}} (\beta + dd^c (\rho + u_k))^n \right).
	\end{split}
	\end{equation*}
	Letting $k \to +\infty$, by $u \in \mathcal{E}(X, \beta + dd^c \rho)$ and Definition \ref{class2}, we infer
	$$
	\int_X \beta^n \leq 0,
	$$
	which contradicts the assumption $\int_X \beta^n > 0$. Therefore, $(\varphi_j)_j$ does not converge uniformly to $-\infty$. Since $\varphi_j \downarrow \env{Au}{\beta + dd^c \rho}$, we conclude that $\env{Au}{\beta + dd^c \rho} \in \PSH{X}{\beta + dd^c \rho}$.
	
	\textbf{(2) $\Rightarrow$ (1).} Since $\int_X (\beta + dd^c \rho + dd^c u)^n \leq \int_X \beta^n$ by definition, to prove the result, we need to show the following claim:
	$$
	\int_X \langle(\beta + dd^c \rho + dd^c u)^n \rangle \geq \int_X \beta^n.
	$$
	Since $\env{Au}{\beta + dd^c \rho} \leq Au \quad \text{quasi-everywhere} \Rightarrow A^{-1} \env{Au}{\beta + dd^c \rho} \leq u \quad \text{everywhere}$, we have
	$$
	\rho + A^{-1} \env{Au}{\beta + dd^c \rho} \leq \rho + u \quad \text{everywhere}.
	$$
	By \cite[Theorem 3.3]{DDL23}, 
	\begin{equation*}
	\begin{split}
	\int_X \langle(\beta + dd^c \rho + dd^c u)^n \rangle &\geq \int_X \left \langle (\beta + dd^c \rho + dd^c A^{-1} \env{Au}{\beta + dd^c \rho}\right)^n \rangle \\
	&\geq \int_X \left(1 - \frac{1}{A}\right)^n (\beta + dd^c \rho)^n \\
	&= \left(1 - \frac{1}{A}\right)^n \int_X \beta^n.
	\end{split}
	\end{equation*}
	Letting $A \to +\infty$, the claim follows.
	
	Thus, we have shown both directions, completing the proof.
\end{proof}

\begin{cor} \label{cor: A}
	If $u \in \mathcal{E}(X, \beta + dd^c \rho)$, then $\env{Au}{\beta + dd^c \rho} \in \mathcal{E}(X, \beta + dd^c \rho)$ for $A \geq 1$.
\end{cor}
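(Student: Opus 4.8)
The plan is to reduce the statement to the characterization of $\mathcal{E}(X,\beta+dd^c\rho)$ furnished by Proposition~\ref{prop: chac_class}. Write $v:=\env{Au}{\beta+dd^c\rho}$. Since $u\in\mathcal{E}(X,\beta+dd^c\rho)$ and $A\geq1$, that proposition already gives $v\in\PSH{X}{\beta+dd^c\rho}$, and in particular $v$ is bounded above (because $v+\rho\in\PSH{X}{\beta}$ on the compact manifold $X$). Applying Proposition~\ref{prop: chac_class} to $v$ this time, it then suffices to show that $\env{Bv}{\beta+dd^c\rho}\in\PSH{X}{\beta+dd^c\rho}$ for every $B\geq1$.

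The core of the argument is the scaling inequality
$$
\env{ABu}{\beta+dd^c\rho}\ \leq\ B\,\env{Au}{\beta+dd^c\rho}\ =\ Bv .
$$
To prove it, set $w:=\env{ABu}{\beta+dd^c\rho}$. Since $AB\geq1$ and $u\in\mathcal{E}(X,\beta+dd^c\rho)$, Proposition~\ref{prop: chac_class} gives $w\in\PSH{X}{\beta+dd^c\rho}$, i.e.\ $\rho+w\in\PSH{X}{\beta}$. As $\rho\in\PSH{X}{\beta}$ as well and $0<\tfrac1B\leq1$, the convex combination $\rho+\tfrac1Bw=\tfrac1B(\rho+w)+(1-\tfrac1B)\rho$ again lies in $\PSH{X}{\beta}$ — this is exactly where the positivity of $\beta+dd^c\rho$, i.e.\ the existence of the bounded potential $\rho$, is used — so $\tfrac1Bw\in\PSH{X}{\beta+dd^c\rho}$. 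On the other hand, $ABu$ is quasi-continuous, being $AB$ times $u=(u+\rho)-\rho$, a difference of quasi-continuous functions; hence Theorem~\ref{thm: put_no_mass4} in its $(\beta+dd^c\rho)$-version (Remark~\ref{rem: second_version}), applied to $ABu$, yields $w\leq ABu$ outside a pluripolar set, so $\tfrac1Bw\leq Au$ quasi-everywhere. Thus $\tfrac1Bw$ belongs to the family whose supremum defines $v=\env{Au}{\beta+dd^c\rho}$, whence $\tfrac1Bw\leq v$ pointwise, which is the claimed inequality.

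Now $Bv$ is bounded above, and the scaling inequality exhibits $w\in\PSH{X}{\beta+dd^c\rho}$ with $w\leq Bv$, so the set $\{\varphi\in\PSH{X}{\beta+dd^c\rho}:\varphi\leq Bv\}$ is non-empty; Theorem~\ref{thm: key}(2) then gives $\env{Bv}{\beta+dd^c\rho}\in\PSH{X}{\beta+dd^c\rho}$. Since $B\geq1$ was arbitrary, Proposition~\ref{prop: chac_class} yields $v=\env{Au}{\beta+dd^c\rho}\in\mathcal{E}(X,\beta+dd^c\rho)$, as desired.

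The only step that is not pure bookkeeping is the scaling inequality, whose two ingredients are the use of the positivity of $\beta+dd^c\rho$ to manufacture the competitor $\tfrac1Bw\in\PSH{X}{\beta+dd^c\rho}$, and the fact that the $(\beta+dd^c\rho)$-envelope stays below its defining function quasi-everywhere (Theorem~\ref{thm: put_no_mass4}); I expect this is where the argument requires the most care, in particular in verifying that $ABu$ is quasi-continuous so that Theorem~\ref{thm: put_no_mass4} genuinely applies.
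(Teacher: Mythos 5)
Your proposal is correct and follows essentially the same route as the paper: both reduce, via Proposition~\ref{prop: chac_class}, to showing $\env{Bv}{\beta+dd^c\rho}\in\PSH{X}{\beta+dd^c\rho}$ for $B\geq1$, and both hinge on the scaling inequality $\env{ABu}{\beta+dd^c\rho}\leq B\,\env{Au}{\beta+dd^c\rho}$, which the paper asserts without comment and you justify in detail. One small remark: you deduce $w\leq ABu$ quasi-everywhere from Theorem~\ref{thm: put_no_mass4}, but the scaling inequality can be obtained more directly by noting that if $\varphi\in\PSH{X}{\beta+dd^c\rho}$ with $\varphi\leq ABu$ q.e., then $\tfrac1B\varphi\in\PSH{X}{\beta+dd^c\rho}$ (by the same convex-combination argument) and $\tfrac1B\varphi\leq Au$ q.e., so $\tfrac1B\varphi\leq\env{Au}{\beta+dd^c\rho}$; taking the supremum over $\varphi$ avoids invoking the quasi-continuity machinery altogether.
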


\begin{proof}
	The proof is mainly based on \cite[Corollary 2.6]{ALS24}.

	Fix $A\geq 1$. Set $v=P_{\beta+dd^c \rho}(Au) \in PSH(X, \beta+dd^c \rho)$. By Proposition \ref{prop: chac_class}, it suffices to prove $P_{\beta+dd^c \rho}(tv)\in PSH(X, \beta+dd^c \rho)$ for $t\geq 1$.
    Since $P_{\beta+dd^c \rho}(Atu) \leq t P_{\beta+dd^c \rho}(Au)$,
    we get $tv\geq P_{\beta+dd^c \rho}(Atu)\in PSH(X, \beta+dd^c \rho)$. Hence $P_{\beta+dd^c \rho}(tv) \geq P_{\beta+dd^c \rho}(Atu)\in PSH(X, \beta+dd^c \rho)$ and we conclude that $P_{\beta+dd^c \rho}(tv) \in PSH(X, \beta+dd^c \rho)$.
\end{proof}

\begin{cor} \label{cor: convex}
	$\mathcal{E}(X, \beta + dd^c \rho)$ is a convex set, and so is $\mathcal{E}(X, \beta)$.
\end{cor}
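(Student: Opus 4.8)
The plan is to reduce everything to the envelope characterization of the full mass class in Proposition~\ref{prop: chac_class}, using that convex combinations of $(\beta+dd^c\rho)$-psh functions are again $(\beta+dd^c\rho)$-psh.

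First I would dispose of the assertion for $\mathcal{E}(X,\beta)$ by transfer: by Definition~\ref{class2} the map $u\mapsto u+\rho$ is a bijection from $\mathcal{E}(X,\beta+dd^c\rho)$ onto $\mathcal{E}(X,\beta)$, and it carries convex combinations to convex combinations, since $t(u_1+\rho)+(1-t)(u_2+\rho)=\big(tu_1+(1-t)u_2\big)+\rho$; hence convexity of $\mathcal{E}(X,\beta+dd^c\rho)$ implies convexity of $\mathcal{E}(X,\beta)$, and it suffices to treat $\mathcal{E}(X,\beta+dd^c\rho)$.

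So fix $u,v\in\mathcal{E}(X,\beta+dd^c\rho)$ and $t\in[0,1]$, and put $w:=tu+(1-t)v$. Since $u+\rho,v+\rho\in\PSH{X}{\beta}$ and $\PSH{X}{\beta}$ is convex, $w+\rho\in\PSH{X}{\beta}$, so $w\in\PSH{X}{\beta+dd^c\rho}$; note also that $w$ is bounded above. By Proposition~\ref{prop: chac_class} it remains to show $\env{Aw}{\beta+dd^c\rho}\in\PSH{X}{\beta+dd^c\rho}$ for every $A\ge 1$, which by Theorem~\ref{thm: key} is the same as $\env{Aw+\rho}{\beta}\in\PSH{X}{\beta}$. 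Fix $A\ge 1$. Since $u,v\in\mathcal{E}(X,\beta+dd^c\rho)$, Proposition~\ref{prop: chac_class} gives $\env{Au}{\beta+dd^c\rho},\env{Av}{\beta+dd^c\rho}\in\PSH{X}{\beta+dd^c\rho}$, so the convex combination $\psi:=t\,\env{Au}{\beta+dd^c\rho}+(1-t)\,\env{Av}{\beta+dd^c\rho}$ lies in $\PSH{X}{\beta+dd^c\rho}$. Now $\env{Au}{\beta+dd^c\rho}\le Au$ and $\env{Av}{\beta+dd^c\rho}\le Av$ quasi-everywhere (Definition~\ref{defn: new envelope}; cf.\ Theorem~\ref{thm: put_no_mass4} and Remark~\ref{rem: second_version}(2)), hence $\psi\le tAu+(1-t)Av=Aw$ quasi-everywhere, and consequently $\psi+\rho\in\PSH{X}{\beta}$ with $\psi+\rho\le Aw+\rho$ quasi-everywhere. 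Thus $\{g\in\PSH{X}{\beta}:g\le Aw+\rho\text{ quasi-everywhere}\}\neq\emptyset$, and $Aw+\rho$ is bounded above, so Proposition~\ref{prop: representation} together with Remark~\ref{rem: equ of env} yields $\env{Aw+\rho}{\beta}\in\PSH{X}{\beta}$ --- concretely, applying the decreasing property to the bounded truncations $\max(Aw+\rho,-j)\downarrow Aw+\rho$ shows that the decreasing limit of the $\env{\max(Aw+\rho,-j)}{\beta}$ is a $\beta$-psh function (it dominates $\psi+\rho$, hence is not $\equiv-\infty$) equal to $\env{Aw+\rho}{\beta}$. Therefore $w\in\mathcal{E}(X,\beta+dd^c\rho)$, and the convexity of $\mathcal{E}(X,\beta+dd^c\rho)$, hence of $\mathcal{E}(X,\beta)$, follows.

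The step requiring the most care is the implication ``$\psi\le Aw$ quasi-everywhere implies $\env{Aw}{\beta+dd^c\rho}\in\PSH{X}{\beta+dd^c\rho}$'': since $P_{\beta+dd^c\rho}$ is defined through a quasi-everywhere inequality (Definition~\ref{defn: new envelope}), $\psi$ is not an admissible everywhere-competitor for Theorem~\ref{thm: key}(2), so one must instead pass to the $\beta$-envelope of $Aw+\rho$ and use the quasi-everywhere criterion, established in Proposition~\ref{prop: representation} and Remark~\ref{rem: equ of env}, for when such an envelope is $\beta$-psh. The remaining steps are routine bookkeeping.
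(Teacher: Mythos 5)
Your proof is correct, but it takes a genuinely different route from the paper's. The paper establishes convexity ``by hand'': (Step 1) $\Phi\in\PSH{X}{\beta+dd^c\rho}$ and $\Phi/2\in\mathcal E$ force $\Phi\in\mathcal E$, via the pointwise current inequality $\beta_\rho+dd^c\max(\Phi/2,-j)\ge\tfrac12(\beta_\rho+dd^c\max(\Phi,-2j))$; (Step 2) monotonicity of $\mathcal E$ under $\varphi\le\psi$ (from DDL); (Step 3) for $\varphi,\psi\in\mathcal E$, the comparison principle gives $(\varphi+\psi)/4\in\mathcal E$ by estimating $(\beta_\rho+dd^c w_j)^n(\{\varphi\le-2j\})$ against $(\beta_\rho+dd^c\varphi_j)^n(\{\varphi\le-j\})$; combining these, plus an implicit use of Step 2, yields $t\varphi+(1-t)\psi\in\mathcal E$. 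You instead reduce everything to the envelope characterization of Proposition~\ref{prop: chac_class}: $w\in\mathcal E$ iff $P_{\beta+dd^c\rho}(Aw)\in\PSH{X}{\beta+dd^c\rho}$ for all $A\ge1$, and then observe that the convex combination $t\,P_{\beta+dd^c\rho}(Au)+(1-t)\,P_{\beta+dd^c\rho}(Av)$ is an admissible quasi-everywhere competitor for $P_{\beta+dd^c\rho}(Aw)$, hence the latter is not $-\infty$. Your route is shorter and more structural (it exploits the same affineness of $\PSH{}{}$ used already in Proposition~\ref{prop: chac_class} and Corollary~\ref{cor: A}), whereas the paper's proof is more quantitative and does not rely on Proposition~\ref{prop: chac_class} at all --- it only uses Step~2's appeal to DDL plus the comparison principle. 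Your handling of the q.e.\ subtlety is the right point to flag: since $\psi\le Aw$ holds only quasi-everywhere, you cannot invoke Theorem~\ref{thm: key}(2) directly (which wants an everywhere competitor), and your detour through $P_\beta(Aw+\rho)$ via Remark~\ref{rem: equ of env} and the decreasing property of Proposition~\ref{prop: representation}(2) is the correct fix. Finally, your transfer argument $v\mapsto v+\rho$ between $\mathcal E(X,\beta+dd^c\rho)$ and $\mathcal E(X,\beta)$ is cleaner and more accurate than the paper's offhand ``by setting $\rho=0$,'' which cannot be taken literally since $\rho$ appears in the very definition of $\mathcal E(X,\beta)$.
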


\begin{proof}
	The proof is mainly based on \cite[Proposition 1.6]{GZ07}.
	
	\textbf{Step 1:} If $\varphi \in \PSH{X}{\beta + dd^c \rho}$ and $\frac{\varphi}{2} \in \mathcal{E}(X, \beta + dd^c \rho)$, then $\varphi \in \mathcal{E}(X, \beta + dd^c \rho)$.
	
	\textit{Proof of Step 1:} Set $u = \frac{\varphi}{2}$, $u_j := \max(u, -j)$, and $\varphi_j := \max(\varphi, -j)$. Then $u_j = \frac{\varphi_{2j}}{2}$ and
	$$
	\beta + dd^c \rho + dd^c u_j = \frac{1}{2}[\beta + dd^c \rho + (\beta + dd^c \rho + dd^c \varphi_{2j})] \geq \frac{1}{2} (\beta + dd^c \rho + dd^c \varphi_{2j}).
	$$
	Thus,
	\begin{equation*}
	\begin{split}
	\int_{\{\varphi \leq -2j\}} (\beta + dd^c \rho + dd^c \varphi_{2j})^n &= \int_{\{u \leq -j\}} (\beta + dd^c \rho + dd^c \varphi_{2j})^n \\
	&\leq 2^n \int_{\{u \leq -j\}} (\beta + dd^c \rho + dd^c u_j)^n \to 0,
	\end{split}
	\end{equation*}
	which implies $\varphi \in \mathcal{E}(X, \beta + dd^c \rho)$.
	
	\textbf{Step 2:} If $\varphi \in \mathcal{E}(X, \beta + dd^c \rho)$ and $\psi \in \PSH{X}{\beta + dd^c \rho}$ such that $\varphi \leq \psi$, then $\psi \in \mathcal{E}(X, \beta + dd^c \rho)$ by \cite[Theorem 3.3]{DDL23}, and moreover, $\frac{\psi}{2} \in \mathcal{E}(X, \beta + dd^c \rho)$.
	
	
	\textbf{Step 3:} If $\varphi, \psi \in \mathcal{E}(X, \beta + dd^c \rho)$, then $\frac{\varphi + \psi}{4} \in \mathcal{E}(X, \beta + dd^c \rho)$.
	
	\textit{Proof of Step 3:} Set $w := \frac{\varphi + \psi}{4}$, $w_j := \max(w, -j)$, $\varphi_j := \max(\varphi, -j)$, and $\psi_j := \max(\psi, -j)$. Since
	$$
	\{w \leq -j\} \subseteq \{\varphi \leq -2j\} \cup \{\psi \leq -2j\},
	$$
	it suffices to prove
	$$
	(\beta + dd^c \rho + dd^c w_j)^n(\{\varphi \leq -2j\}) \to 0
	$$
	and
	$$
	(\beta + dd^c \rho + dd^c w_j)^n(\{\psi \leq -2j\}) \to 0.
	$$
	Without loss of generality, assume $\varphi, \psi \leq -2$. Then we have
	$$
	\{\varphi \leq -2j\} \subseteq \{\varphi_{2j} < w_j - j + 1\} \subseteq \{\varphi \leq -j\}.
	$$
	Hence,
	\begin{equation*}
	\begin{split}
	(\beta + dd^c \rho + dd^c w_j)^n(\{\varphi \leq -2j\}) &\leq (\beta + dd^c \rho + dd^c w_j)^n(\{\varphi_{2j} < w_j - j + 1\}) \\
	&\leq (\beta + dd^c \rho + dd^c \varphi_{2j})^n(\{\varphi \leq -j\}) \\
	&= (\beta + dd^c \rho + dd^c \varphi_j)^n(\{\varphi \leq -j\}) \to 0,
	\end{split}
	\end{equation*}
	where the second inequality follows from the comparison principle (Remark \ref{second_version} (4)), and the last equality follows from Lemma \ref{pl_prop}.
	
	By Steps 1 and 3, we conclude that $\mathcal{E}(X, \beta + dd^c \rho)$ is a convex set. The same argument applies to $\mathcal{E}(X, \beta)$ by setting $\rho = 0$.
\end{proof}

\begin{lem} \label{lem: min1}
	Assume $u, v \in \mathcal{E}(X, \beta + dd^c \rho)$. Then $\env{\min(u, v)}{\beta + dd^c \rho} \in \mathcal{E}(X, \beta + dd^c \rho)$.
\end{lem}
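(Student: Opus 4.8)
The plan is to produce a function in $\mathcal{E}(X,\beta+dd^c\rho)$ lying below $\env{\min(u,v)}{\beta+dd^c\rho}$, and then to conclude by the monotonicity of the full mass class recorded in Step 2 of the proof of Corollary~\ref{cor: convex} (an application of \cite[Theorem 3.3]{DDL23}). First I would normalize: since $u+\rho,v+\rho\in\PSH{X}{\beta}$ are bounded above and $\rho$ is bounded, $c:=\max(\sup_X u,\sup_X v)$ is finite; replacing $(u,v)$ by $(u-c,v-c)$ shifts $\env{\min(u,v)}{\beta+dd^c\rho}$ only by the additive constant $-c$ and, by Definition~\ref{class2}, leaves membership in $\mathcal{E}(X,\beta+dd^c\rho)$ unchanged, so I may assume $u\le 0$ and $v\le 0$ on $X$. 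With this normalization $u+v\le u$ and $u+v\le v$, hence $u+v\le\min(u,v)$ pointwise.

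Next I would build the competitor. By Corollary~\ref{cor: convex} the class $\mathcal{E}(X,\beta+dd^c\rho)$ is convex, so $\tfrac{u+v}{2}\in\mathcal{E}(X,\beta+dd^c\rho)$; applying Corollary~\ref{cor: A} with $A=2$ to $\tfrac{u+v}{2}$ gives
$$
\varphi_0:=\env{u+v}{\beta+dd^c\rho}=\env{2\cdot\tfrac{u+v}{2}}{\beta+dd^c\rho}\in\mathcal{E}(X,\beta+dd^c\rho)\subset\PSH{X}{\beta+dd^c\rho}.
$$
In particular $\varphi_0\not\equiv-\infty$ and $\varphi_0\le u+v\le\min(u,v)$ quasi-everywhere. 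Since $\varphi_0+\rho$ is $\beta$-psh — hence upper semicontinuous and subject to the sub-mean value inequality — while $\min(u,v)+\rho=\min(u+\rho,v+\rho)$ is upper semicontinuous, and pluripolar sets are Lebesgue-null, the quasi-everywhere inequality $\varphi_0+\rho\le\min(u,v)+\rho$ propagates to an everywhere inequality; thus $\varphi_0\le\min(u,v)$ on all of $X$.

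Now I would finish. The set $\{\varphi\in\PSH{X}{\beta+dd^c\rho}:\varphi\le\min(u,v)\}$ is non-empty and $\min(u,v)$ is measurable and bounded above, so Theorem~\ref{thm: key}(2) gives $w:=\env{\min(u,v)}{\beta+dd^c\rho}\in\PSH{X}{\beta+dd^c\rho}$. Because $\varphi_0$ lies in the family over which the supremum defining $w$ is taken, $\varphi_0\le w$. Then $\varphi_0\in\mathcal{E}(X,\beta+dd^c\rho)$, $w\in\PSH{X}{\beta+dd^c\rho}$ and $\varphi_0\le w$ together yield $w\in\mathcal{E}(X,\beta+dd^c\rho)$ by Step 2 of the proof of Corollary~\ref{cor: convex}, which is exactly the claim.

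The hard part is not the elementary inequality $u+v\le\min(u,v)$ but the functional-analytic bookkeeping forced by the fact that $(\beta+dd^c\rho)$-psh functions need not be upper semicontinuous: one must check that $w$ is genuinely $(\beta+dd^c\rho)$-psh rather than merely a pointwise supremum, and that the quasi-everywhere versus everywhere distinction is handled correctly when invoking Theorem~\ref{thm: key}(2). Exhibiting the explicit competitor $\varphi_0\in\mathcal{E}(X,\beta+dd^c\rho)$ below $\min(u,v)$ and upgrading the comparison through the genuinely u.s.c. functions $\varphi_0+\rho$ and $\min(u+\rho,v+\rho)$ is the device designed to circumvent this; the remaining steps are then routine given Corollaries~\ref{cor: convex} and \ref{cor: A} and Theorem~\ref{thm: key}.
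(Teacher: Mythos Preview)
Your proof is correct and follows the same route as the paper: normalize to $u,v\le 0$, use Corollaries~\ref{cor: convex} and~\ref{cor: A} to place $\env{u+v}{\beta+dd^c\rho}$ in $\mathcal{E}(X,\beta+dd^c\rho)$, then invoke $u+v\le\min(u,v)$ together with the monotonicity of the full-mass class (Step~2 of Corollary~\ref{cor: convex}). The only quibble is your justification for upgrading $\varphi_0\le\min(u,v)$ from quasi-everywhere to everywhere: upper semicontinuity of $\min(u+\rho,v+\rho)$ alone does not suffice (take $g=-\ind{\{0\}}$ against $f\equiv 0$); the clean fix is to argue $\varphi_0\le u$ and $\varphi_0\le v$ separately, each being an a.e.\ inequality between two $(\beta+dd^c\rho)$-psh functions and hence upgradable via the sub-mean value property on both sides, or simply to observe that since the envelope in Definition~\ref{defn: new envelope} is taken over the quasi-everywhere family, the upgrade is not needed to conclude $\varphi_0\le w$.
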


\begin{proof}
	The proof is mainly based on \cite[Lemma 2.9]{ALS24}.
	
	By adding a constant, we can assume without loss of generality that $u, v \leq 0$.
	
	Denote by $\varphi = \env{\min(u, v)}{\beta + dd^c \rho}$ and set $h = \frac{u + v}{2}$.
	
	Since $u, v \in \mathcal{E}(X, \beta + dd^c \rho)$, by Corollary \ref{cor: convex}, the set $\mathcal{E}(X, \beta + dd^c \rho)$ is convex. Therefore, $h = \frac{u + v}{2} \in \mathcal{E}(X, \beta + dd^c \rho)$.
	
	By Corollary \ref{cor: A}, it follows that $\env{u + v}{\beta + dd^c \rho} \in \mathcal{E}(X, \beta + dd^c \rho)$.
	
	Moreover, since $u, v \leq 0$, we have $u + v \leq \min(u, v)$. Consequently,
	$$
	\env{u + v}{\beta + dd^c \rho} \leq \env{\min(u, v)}{\beta + dd^c \rho}.
	$$
	By Step 2 in the proof of Corollary \ref{cor: convex}, if $\psi \in \mathcal{E}(X, \beta + dd^c \rho)$ and $\phi \in \PSH{X}{\beta + dd^c \rho}$ such that $\psi \leq \phi$, then $\phi \in \mathcal{E}(X, \beta + dd^c \rho)$. Applying this observation to $\psi = \env{u + v}{\beta + dd^c \rho}$ and $\phi = \env{\min(u, v)}{\beta + dd^c \rho}$, we conclude that
	$$
	\env{\min(u, v)}{\beta + dd^c \rho} \in \mathcal{E}(X, \beta + dd^c \rho).
	$$
	This completes the proof.
\end{proof}

\begin{lem} \label{lem: ineq_3}
	Assume $u, v \in \PSH{X}{\beta + dd^c \rho}$ and $\env{u, v}{\beta + dd^c \rho} \in \PSH{X}{\beta + dd^c \rho}$, and let $h \in \PSH{X}{\beta + dd^c \rho}$. Then,
	\begin{equation*}
	\begin{split}
	&\int_{X} \left| e^{P_{\beta+dd^c \rho}(u,v)} - e^h \right| \langle(\beta + dd^c \rho + dd^c P_{\beta+dd^c \rho}(u,v))^n \rangle\\
	\leq & \int_{X} \left| e^u - e^h \right| \langle (\beta + dd^c \rho + dd^c u)^n \rangle + \int_{X} \left| e^v - e^h \right| \langle(\beta + dd^c \rho + dd^c v)^n\rangle.
	\end{split}
	\end{equation*}
\end{lem}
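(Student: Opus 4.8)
Write $\varphi := \env{u,v}{\beta+dd^c\rho}$; by hypothesis $\varphi \in \PSH{X}{\beta+dd^c\rho}$, so the non-pluripolar measure $\langle(\beta+dd^c\rho+dd^c\varphi)^n\rangle$ is defined. The plan is to exploit that this measure is carried by the contact set of the envelope and is there dominated by $\langle(\beta+dd^c\rho+dd^c u)^n\rangle$ or $\langle(\beta+dd^c\rho+dd^c v)^n\rangle$; on that contact set $\varphi$ agrees pointwise with $u$ or with $v$, which lets me replace $e^{\varphi}$ by $e^{u}$ or $e^{v}$ in the integrand and then enlarge the domain of integration from the contact pieces to all of $X$.

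Concretely, I would set
\[
K_u := \{\varphi = u\}, \qquad K_v := \{\varphi = v\}\cap\{\varphi < u\},
\]
two disjoint Borel sets. By the $(\beta+dd^c\rho)$-version of Corollary \ref{cor: contactieq} (legitimate by the Remark following it, which relies on Theorem \ref{thm: key}; the underlying mass-concentration statement is Theorem \ref{thm: put_no_mass4} in the form recorded in Remark \ref{rem: second_version}), one has
\[
\langle(\beta+dd^c\rho+dd^c\varphi)^n\rangle \;\le\; \ind{K_u}\,\langle(\beta+dd^c\rho+dd^c u)^n\rangle \;+\; \ind{K_v}\,\langle(\beta+dd^c\rho+dd^c v)^n\rangle .
\]
In particular $\langle(\beta+dd^c\rho+dd^c\varphi)^n\rangle$ is carried by $K_u\cup K_v$, and since $K_u\cap K_v=\emptyset$ this gives the two one-sided bounds
\[
\ind{K_u}\langle(\beta+dd^c\rho+dd^c\varphi)^n\rangle\le\ind{K_u}\langle(\beta+dd^c\rho+dd^c u)^n\rangle,\quad \ind{K_v}\langle(\beta+dd^c\rho+dd^c\varphi)^n\rangle\le\ind{K_v}\langle(\beta+dd^c\rho+dd^c v)^n\rangle .
\]

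Next, on $K_u$ we have $\varphi = u$ pointwise (harmlessly including the pluripolar locus where both equal $-\infty$, which all the measures in play ignore), hence $|e^{\varphi}-e^h| = |e^u-e^h|$ on $K_u$; likewise $|e^{\varphi}-e^h| = |e^v-e^h|$ on $K_v$. Since $|e^{\varphi}-e^h|$ is a nonnegative Borel function, integrating the one-sided measure bounds yields
\[
\int_{K_u}\big|e^{\varphi}-e^h\big|\,\langle(\beta+dd^c\rho+dd^c\varphi)^n\rangle = \int_{K_u}\big|e^{u}-e^h\big|\,\langle(\beta+dd^c\rho+dd^c\varphi)^n\rangle \le \int_{X}\big|e^{u}-e^h\big|\,\langle(\beta+dd^c\rho+dd^c u)^n\rangle,
\]
and similarly for $K_v$ with $v$ in place of $u$. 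Adding these two inequalities and using that $\langle(\beta+dd^c\rho+dd^c\varphi)^n\rangle$ lives on $K_u\cup K_v$ produces exactly the asserted estimate.

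I do not expect a genuine obstacle: the whole argument is bookkeeping once the contact-set behaviour of the envelope's Monge-Amp\`ere measure is in hand, and that behaviour has already been established (Theorem \ref{thm: put_no_mass4}, Corollary \ref{cor: contactieq}, and the Remark after it), including the subtlety that $(\beta+dd^c\rho)$-psh functions need not be upper semicontinuous when $\rho$ is merely bounded. The only points requiring care are the disjointness of $K_u$ and $K_v$ (so that the mass of $\varphi$ is not double counted) and the persistence of the pointwise identities $\varphi=u$ on $K_u$, $\varphi=v$ on $K_v$ after composition with the exponential.
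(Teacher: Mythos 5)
Your argument is exactly what the paper means by ``a direct consequence of Corollary \ref{cor: contactieq}'': you invoke that corollary's decomposition of $\langle(\beta+dd^c\rho+dd^c\varphi)^n\rangle$ over the disjoint contact sets $K_u$ and $K_v$, replace $e^{\varphi}$ by $e^u$ (resp.\ $e^v$) there, and enlarge the domain. The proposal is correct and follows the paper's route, just with the bookkeeping made explicit.
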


\begin{proof}
	The proof is a direct consequence of Corollary \ref{cor: contactieq}.
\end{proof}

\begin{thm} [Domination Principle] \label{thm: domi_principal}
	Let $c \in [0, 1)$, and assume $\beta + dd^c \rho \geq 0$ with $\rho$ bounded. Let $u, v \in \mathcal{E}(X, \beta + dd^c \rho)$. If
	$$
	\langle(\beta + dd^c \rho + dd^c u)^n \rangle \leq c \langle (\beta + dd^c \rho + dd^c v)^n\rangle
	$$
	on $\{u < v\}$, then $u \geq v$.
\end{thm}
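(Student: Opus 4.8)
Throughout write $\theta:=\beta+dd^c\rho\ge 0$, so that $u,v\in\mathcal E(X,\theta)$ and $\int_X\langle(\theta+dd^c u)^n\rangle=\int_X\langle(\theta+dd^c v)^n\rangle=\int_X\beta^n$. The plan is to deduce $u\ge v$ by feeding the comparison principle (Remark \ref{rem: second_version}(4)) into the envelope and contact-set technology of \S 2. Two reductions come first. We may assume $c\in(0,1)$, since for $c=0$ the hypothesis reads $\langle(\theta+dd^c u)^n\rangle=0$ on $\{u<v\}$, which is $\le c'\langle(\theta+dd^c v)^n\rangle$ there for any $c'\in(0,1)$. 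Moreover it suffices to prove that $\{u<v\}$ is pluripolar: the functions $\rho+u,\rho+v\in\PSH{X}{\beta}$ are (upper semicontinuous) quasi-plurisubharmonic with $\{\rho+u<\rho+v\}=\{u<v\}$, and a pluripolar set is Lebesgue negligible, so pluripolarity of $\{u<v\}$ forces $\rho+u\ge\rho+v$ almost everywhere and hence everywhere by the sub-mean-value property, i.e.\ $u\ge v$ on $X$.

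The first substantial step is purely measure-theoretic. Applying the comparison principle to $u,v\in\mathcal E(X,\theta)$ and then the hypothesis,
\[
\int_{\{u<v\}}\langle(\theta+dd^c v)^n\rangle\ \le\ \int_{\{u<v\}}\langle(\theta+dd^c u)^n\rangle\ \le\ c\int_{\{u<v\}}\langle(\theta+dd^c v)^n\rangle ,
\]
so $c<1$ gives $\langle(\theta+dd^c v)^n\rangle(\{u<v\})=0$, and then the hypothesis gives $\langle(\theta+dd^c u)^n\rangle(\{u<v\})=0$. Now set $w:=\max(u,v)$; it is $\theta$-psh and dominates $u,v\in\mathcal E(X,\theta)$, so $w\in\mathcal E(X,\theta)$ (Step 2 in the proof of Corollary \ref{cor: convex}). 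On the plurifine open sets $\{u<v\}$ and $\{u>v\}$ we have $w=v$, resp.\ $w=u$, so plurifine locality of the non-pluripolar product (Theorem \ref{thm: plurifine_locality_3}, applied to $\rho+w,\rho+u,\rho+v\in\PSH{X}{\beta}$) together with the two vanishing statements yields $\langle(\theta+dd^c w)^n\rangle=\langle(\theta+dd^c u)^n\rangle$ on $\{u\ne v\}$; on $\{u=v\}$, Corollary \ref{cor: comparison} (applied to $u\le w$) gives $\langle(\theta+dd^c w)^n\rangle\ge\langle(\theta+dd^c u)^n\rangle$. Hence $\langle(\theta+dd^c w)^n\rangle\ge\langle(\theta+dd^c u)^n\rangle$ on $X$, and since both have total mass $\int_X\beta^n$, in fact $\langle(\theta+dd^c w)^n\rangle=\langle(\theta+dd^c u)^n\rangle$.

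It remains to upgrade this equality of measures to $w=u$, equivalently $u\ge v$, and this is the step I expect to be the main obstacle: it is a uniqueness-type assertion, and one cannot invoke the uniqueness theorem for Monge--Amp\`ere equations in $\mathcal E(X,\theta)$, since that theorem is downstream of the present one. The route I would follow is the envelope argument in the spirit of \cite{ALS24}. Introduce $\Phi:=P_{\beta+dd^c\rho}(\min(u,v))\in\mathcal E(X,\theta)$ (Lemma \ref{lem: min1}), which satisfies $\Phi\le u$ and $\Phi\le v$; by the $(\beta+dd^c\rho)$-version of Corollary \ref{cor: contactieq} (valid via Theorem \ref{thm: key}),
\[
\langle(\theta+dd^c\Phi)^n\rangle\ \le\ \ind{\{\Phi=u\}}\langle(\theta+dd^c u)^n\rangle+\ind{\{\Phi=v\}\cap\{\Phi<u\}}\langle(\theta+dd^c v)^n\rangle .
\]
On $\{\Phi<v\}$ the second term vanishes and the first restricts to $\ind{\{u=\Phi<v\}}\langle(\theta+dd^c u)^n\rangle$, which is zero because $\{u<v\}$ carries no $\langle(\theta+dd^c u)^n\rangle$-mass; thus $\langle(\theta+dd^c\Phi)^n\rangle(\{\Phi<v\})=0$, and one more application of the comparison principle to $\Phi\le v$ gives $\langle(\theta+dd^c v)^n\rangle(\{\Phi<v\})=0$. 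Both measures therefore live on $\{\Phi=v\}$, so the ``in particular'' part of Corollary \ref{cor: comparison} (using $\Phi\le v$) combined with equality of total masses forces $\langle(\theta+dd^c\Phi)^n\rangle=\langle(\theta+dd^c v)^n\rangle$. Finally, using crucially that $\Phi$ is an envelope — hence (Theorem \ref{thm: put_no_mass4}) its Monge--Amp\`ere measure is carried by the contact set $\{\Phi=\min(u,v)\}$, i.e.\ by $\{\Phi=v\le u\}$ — one closes the argument by a last comparison against a perturbed competitor such as $\max(\Phi+t,v)$ for small generic $t>0$ (where the strict inequality $c<1$ is used once more), concluding $\Phi=v$, whence $v=\Phi\le u$. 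The degeneracy permitted for $\theta$ and the Hermitian torsion of $\omega$ are exactly what make this last step require the envelope theory of \S 2 rather than a naive integration-by-parts argument, and the standing assumptions $\beta+dd^c\rho\ge0$ and $\rho$ bounded are used precisely to guarantee that the comparison principle and the contact-set results above all apply.
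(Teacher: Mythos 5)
Your opening moves are sound and in fact take a genuinely different route from the paper for the first reduction: you invoke the comparison principle (Remark \ref{rem: second_version}(4)) directly on $\{u<v\}$ to obtain
\[
\int_{\{u<v\}}\langle\beta_v^n\rangle\le\int_{\{u<v\}}\langle\beta_u^n\rangle\le c\int_{\{u<v\}}\langle\beta_v^n\rangle,
\]
so $c<1$ immediately kills both masses on $\{u<v\}$, and the subsequent computations $\langle\beta_{\max(u,v)}^n\rangle=\langle\beta_u^n\rangle$ and $\langle\beta_{P_{\beta+dd^c\rho}(\min(u,v))}^n\rangle=\langle\beta_v^n\rangle$ are correct as far as they go. The paper never uses the comparison principle here; it instead builds a one-parameter family of envelopes $u_b=P_{\beta+dd^c\rho}(bu-(b-1)v)$ and feeds $c<1$ into a mixed Monge--Amp\`ere inequality
\[
(b^{-1})^n\langle\beta_{u_b}^n\rangle+(1-b^{-1})^n\langle\beta_v^n\rangle\le c\,\langle\beta_v^n\rangle
\]
on the contact set, choosing $b$ with $(1-b^{-1})^n>c$.

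The gap is at exactly the point you flag. You have $\Phi:=P_{\beta+dd^c\rho}(\min(u,v))\le v$ with $\langle\beta_\Phi^n\rangle=\langle\beta_v^n\rangle$, and you need $\Phi=v$. That is precisely the uniqueness-in-$\mathcal E$ statement, which is downstream of the Domination Principle, and your proposed ``last comparison against $\max(\Phi+t,v)$'' does not close it: computing $\langle\beta_{\max(\Phi+t,v)}^n\rangle$ by plurifine locality just returns the same measure $\langle\beta_v^n\rangle$ for every $t$ and yields no contradiction. Moreover your parenthetical ``where the strict inequality $c<1$ is used once more'' cannot be right, because once you know $\langle\beta_v^n\rangle(\{u<v\})=0$ the hypothesis is vacuous on $\{u<v\}$ and $c$ carries no further information. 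What the paper actually does at this stage is entirely different: it produces a uniform bound $\sup_b\int_X h(|u_b|)\langle\beta_{u_b}^n\rangle<\infty$ for an increasing $h$ with $h(+\infty)=+\infty$, deduces by contradiction that $(\sup_X u_b)_b$ stays bounded, extracts an $L^1$-limit $w\in\PSH{X}{\beta+dd^c\rho}$, and then uses the pointwise inequality $u_b\le v-ab$ on $\{u<v-a\}$ to force $\int_{\{u<v-a\}}\omega^n=0$ for every $a>0$. It is this Lebesgue-measure argument, not a measure-equality argument, that converts the envelope information into the pointwise conclusion $u\ge v$. You would need to supply an argument of comparable strength (or run the paper's $u_b$-construction on the pair $\Phi\le v$, for which your reduction would let you take any $c\in(0,1)$); as written the proof is incomplete.
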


\begin{proof}
	
	The proof is mainly based on \cite{ALS24}.	
    
	By Lemma \ref{pl_prop}, we have
	$$
	c \langle(\beta + dd^c \rho + dd^c \max(u, v))^n \rangle= c \langle (\beta + dd^c \rho + dd^c v)^n \rangle  \leq \langle(\beta + dd^c \rho + dd^c v)^n\rangle
	$$
	on $\{u < v\}$. Thus, we can replace $v$ by $\max(u, v)$ and assume without loss of generality that $u \leq v$. Our goal is to prove that $u = v$, which implies the final result.
	
	For $b > 1$, define $u_b = \env{bu - (b-1)v}{\beta + dd^c \rho}$. We have $bu - (b-1)v \leq v$ for each $b > 1$ and $u_b \in \PSH{X}{\beta + dd^c \rho}$ or $u_b \equiv -\infty$.
	
	Next, we show that $u_b \in \mathcal{E}(X, \beta + dd^c \rho)$ for each $b > 1$. By Corollary \ref{cor: A}, since $u \in \mathcal{E}(X, \beta + dd^c \rho)$, we have
	\begin{equation} \label{D1}
	P_{\beta + dd^c \rho}(bu) \in \mathcal{E}(X, \beta + dd^c \rho).
	\end{equation}
	We also have
	$$
	P_{\beta + dd^c \rho}(bu) \leq bu \leq bu - (b-1)(v - \sup_X v)
	$$
	quasi-everywhere, which implies
	\begin{equation} \label{D2}
	P_{\beta + dd^c \rho}(bu) - (b-1) \sup_X v \leq bu - (b-1)v \quad \text{quasi-everywhere}.
	\end{equation}
	From \eqref{D1} and \eqref{D2}, it follows that
	$$
	u_b \geq P_{\beta + dd^c \rho}(bu) - (b-1) \sup_X v \in \mathcal{E}(X, \beta + dd^c \rho).
	$$
	By Step 2 in the proof of Corollary \ref{cor: convex}, we conclude that $u_b \in \mathcal{E}(X, \beta + dd^c \rho)$.
	
	Denote by $\mathrm{D} := \{u_b = bu - (b-1)v\}$. According to Remark \ref{second_version} (2), $(\beta + dd^c \rho + dd^c u_b)^n$ is carried by $\mathrm{D}$, and $u_b \leq bu - (b-1)v$ quasi-everywhere (i.e., $b^{-1}u_b + (1-b^{-1})v \leq u$ quasi-everywhere). Since both sides are in $\PSH{X}{\beta + dd^c \rho}$, this inequality holds everywhere. By Corollary \ref{second_version} (1),
	$$
	\ind{D} \left \langle \left( \beta + dd^c \rho + dd^c (b^{-1}u_b + (1-b^{-1})v) \right)^n \right \rangle  \leq \ind{D} \langle (\beta + dd^c \rho + dd^c u)^n \rangle.
	$$
	Hence, by hypothesis,
	\begin{align*}
	&\ind{D \cap \{u < v\}} (b^{-1})^n \langle (\beta + dd^c \rho + dd^c u_b)^n \rangle + \ind{D \cap \{u < v\}} (1-b^{-1})^n \langle (\beta + dd^c \rho + dd^c v)^n \rangle\\
	&\leq \ind{D \cap \{u < v\}} \left \langle\left( \beta + dd^c \rho + dd^c (b^{-1}u_b + (1-b^{-1})v) \right)^n \right \rangle \\
	&\leq \ind{D \cap \{u < v\}} c \langle (\beta + dd^c \rho + dd^c v)^n \rangle.
	\end{align*}
	Choose $b$ large enough such that $(1-b^{-1})^n > c$, we have
	\begin{align*}
	&\ind{D \cap \{u < v\}} (b^{-1})^n \langle (\beta + dd^c \rho + dd^c u_b)^n \rangle \\
	&\leq \ind{D \cap \{u < v\}} (c - (1-b^{-1})^n) \langle (\beta + dd^c \rho + dd^c v)^n \rangle.
	\end{align*}
	Thus, $\ind{D \cap \{u < v\}}  \langle (\beta + dd^c \rho + dd^c u_b)^n \rangle = 0$, which means $ \langle (\beta + dd^c \rho + dd^c u_b)^n \rangle$ is carried by $D \cap \{u = v\}$. On $D \cap \{u = v\}$, we have $u_b = v = u$ and $u_b \leq bu - (b-1)v \leq v$ on $X$. By Remark \ref{second_version} (1),
	\begin{equation} \label{D3}
	\ind{D \cap \{u = v\}} \langle (\beta + dd^c \rho + dd^c u_b)^n  \rangle \leq \ind{D \cap \{u = v\}} \langle (\beta + dd^c \rho + dd^c v)^n \rangle.
	\end{equation}
	
	Since $\langle (\beta + dd^c \rho + dd^c v)^n \rangle$ does not charge $\{v = -\infty\}$, one can construct an increasing function $h: \mathbb{R}^+ \to \mathbb{R}^+$ such that $h(+\infty) = +\infty$ and $h(|v|) \in L^1(\langle (\beta + dd^c \rho + dd^c v)^n \rangle )$. Thus, by \eqref{D3},
	\begin{equation} \label{D4}
	\begin{split}
	\int_X h(|u_b|) \langle (\beta + dd^c \rho + dd^c u_b)^n \rangle &= \int_{D \cap \{u = v\}} h(|v|) \langle (\beta + dd^c \rho + dd^c u_b)^n \rangle \\
	&\leq \int_X h(|v|) \langle (\beta + dd^c \rho + dd^c v)^n \rangle
    < +\infty.
	\end{split}
	\end{equation}
	
	If $(\sup_X u_b)_{b > 1}$ has a subsequence that converges to $-\infty$, then for every positive scalar $\alpha$, one can find $b$ such that $\sup_X u_b \leq -\alpha$. This implies
	\begin{equation*}
	\begin{split}
	\int_X h(|u_b|) \langle (\beta + dd^c \rho + dd^c u_b)^n  \rangle &\geq h(\alpha) \int_X \langle (\beta + dd^c \rho + dd^c u_b)^n \rangle\\
	&= h(\alpha) \int_X \beta^n,
	\end{split}
	\end{equation*}
	which is impossible because $\sup_{b \geq 1} \int_X h(|u_b|) \langle (\beta + dd^c \rho + dd^c u_b)^n \rangle < +\infty$. Therefore, $(\sup_X u_b)_{b > 1}$ is uniformly bounded.
	
	By compactness properties, $(u_b)_b$ has a subsequence $(u_{b_j})_j$ which converges in $L^1(X)$ to a function $w \in \PSH{X}{\beta + dd^c \rho}$. Fix $a > 0$. On $\{u < v - a\}$, we have $u_b \leq v - ab$, hence
	\begin{equation}
	\begin{split}
	\int_{\{u < v - a\}} w \cdot \omega^n &= \lim_{j \to +\infty} \int_{\{u < v - a\}} u_{b_j} \cdot \omega^n \\
	&\leq \lim_{j \to +\infty} (-ab_j) \int_{\{u < v - a\}} \omega^n + \int_{\{u < v - a\}} v \cdot \omega^n.
	\end{split}
	\end{equation}
	Since $w \in L^1(X)$, we infer that
	$$
	\int_{\{u < v - a\}} \omega^n = 0,
	$$
	which implies $u \geq v - a$ almost everywhere. Both sides are quasi-psh, so by subharmonicity, $u \geq v - a$ everywhere. Letting $a \to 0$, we conclude that $u = v$.
\end{proof}

\begin{cor} \label{cor: comparison1}
	Fix $\lambda > 0$. If $u_1, u_2 \in \mathcal{E}(X, \beta + dd^c \rho)$ are such that
	\begin{align*}
	e^{-\lambda u_1} \langle (\beta + dd^c \rho + dd^c u_1)^n \rangle \leq e^{-\lambda u_2} 
    \langle (\beta + dd^c \rho + dd^c u_2)^n \rangle,
	\end{align*}
	then $u_1 \geq u_2$.
\end{cor}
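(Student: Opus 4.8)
The plan is to reduce the statement to the Domination Principle, Theorem~\ref{thm: domi_principal}, which is available here because $\rho$ is bounded and $\beta + dd^c \rho \geq 0$ (as $\rho \in \PSH{X}{\beta}$). The one genuine subtlety is that on $\{u_1 < u_2\}$ the hypothesis only gives the pointwise inequality of Monge-Amp\`ere measures with factor $e^{\lambda(u_1 - u_2)}$, which is $< 1$ at each point but not bounded away from $1$; to invoke Theorem~\ref{thm: domi_principal} we need an honest constant $c \in [0,1)$, so we first work on the slightly shrunk set $\{u_1 < u_2 - \varepsilon\}$ and then let $\varepsilon \downarrow 0$.

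Concretely, fix $\varepsilon > 0$, put $c := e^{-\lambda \varepsilon} \in (0,1)$ and $v := u_2 - \varepsilon$. Since $\mathcal{E}(X,\beta+dd^c\rho)$ is invariant under adding constants, $v \in \mathcal{E}(X,\beta+dd^c\rho)$ and $\langle(\beta + dd^c\rho + dd^c v)^n\rangle = \langle(\beta + dd^c\rho + dd^c u_2)^n\rangle$. On $\{u_1 < v\}$ we have $-\lambda u_1 \geq -\lambda u_2 + \lambda \varepsilon$, hence $e^{-\lambda u_2} \leq e^{-\lambda \varepsilon} e^{-\lambda u_1} = c\, e^{-\lambda u_1}$ there. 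Writing $\mu_i := \langle(\beta + dd^c\rho + dd^c u_i)^n\rangle$, the hypothesis $e^{-\lambda u_1}\mu_1 \leq e^{-\lambda u_2}\mu_2$ therefore gives, on $\{u_1 < v\}$,
$$
e^{-\lambda u_1}\,\ind{\{u_1 < v\}}\,\mu_1 \;\leq\; e^{-\lambda u_2}\,\ind{\{u_1 < v\}}\,\mu_2 \;\leq\; c\, e^{-\lambda u_1}\,\ind{\{u_1 < v\}}\,\mu_2 .
$$
On $\{u_1 > -\infty\}$ the weight $e^{-\lambda u_1}$ is positive and finite, so it can be cancelled; and since $\mu_1$ puts no mass on the pluripolar set $\{u_1 = -\infty\}$, we conclude
$$
\ind{\{u_1 < v\}}\,\langle(\beta + dd^c\rho + dd^c u_1)^n\rangle \;\leq\; c\,\ind{\{u_1 < v\}}\,\langle(\beta + dd^c\rho + dd^c v)^n\rangle .
$$

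Now Theorem~\ref{thm: domi_principal}, applied with this pair $(u_1, v)$ and the constant $c \in [0,1)$, yields $u_1 \geq v = u_2 - \varepsilon$ on $X$. Letting $\varepsilon \to 0^+$ gives $u_1 \geq u_2$, as claimed. I do not expect a serious obstacle: the only points needing care are the harmless bookkeeping on $\{u_1 = -\infty\}$ when dividing by $e^{-\lambda u_1}$, and the $\varepsilon$-shift used to produce the uniform contraction constant that Theorem~\ref{thm: domi_principal} requires.
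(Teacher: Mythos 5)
Your proof is correct and is essentially the paper's own argument: shift $u_2$ down by a small constant $\varepsilon$ (the paper calls it $a$) so that on $\{u_1 < u_2 - \varepsilon\}$ the pointwise factor $e^{\lambda(u_1-u_2)}$ is dominated by the genuine constant $c=e^{-\lambda\varepsilon}<1$, invoke Theorem~\ref{thm: domi_principal} with $v=u_2-\varepsilon$, and let $\varepsilon\to 0^+$. You are a bit more explicit than the paper about cancelling the finite positive weight $e^{-\lambda u_1}$ off the pluripolar set and about the final limit in $\varepsilon$, but the structure and the key ingredient (the Domination Principle) are the same.
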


\begin{proof}
	
	For $a > 0$, we have 
	$$
	\langle (\beta + dd^c \rho + dd^c u_1)^n \rangle \leq e^{\lambda(u_1 - u_2)} 
    \langle (\beta + dd^c \rho + dd^c u_2)^n \rangle \leq e^{-\lambda a} \langle (\beta + dd^c \rho + dd^c u_2)^n \rangle
	$$
	on $\{u_1 < u_2 - a\}$. By Theorem \ref{thm: domi_principal}, this implies $u_1 \geq u_2$. 
	
%
%
%
%
\end{proof}

\begin{cor} \label{cor: comparison2}
	If $u, v \in \mathcal{E}(X, \beta + dd^c \rho)$ are such that
	\begin{align*}
	\langle (\beta + dd^c \rho + dd^c u)^n \rangle  \leq c \langle (\beta + dd^c \rho + dd^c v)^n \rangle 
	\end{align*}
	for some positive constant $c$, then $c \geq 1$.
\end{cor}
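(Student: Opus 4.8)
The plan is to integrate the given inequality of measures over all of $X$ and exploit the mass normalization that is built into membership in the full mass class. Concretely, since $u \in \mathcal{E}(X,\beta+dd^c\rho)$, Definition \ref{class2} gives
$$
\int_X \langle(\beta+dd^c\rho+dd^c u)^n\rangle = \int_X\beta^n ,
$$
and the identical relation holds for $v$. Integrating the hypothesis $\langle(\beta+dd^c\rho+dd^c u)^n\rangle \leq c\,\langle(\beta+dd^c\rho+dd^c v)^n\rangle$ over $X$ then produces
$$
\int_X\beta^n \;=\; \int_X \langle(\beta+dd^c\rho+dd^c u)^n\rangle \;\leq\; c\int_X \langle(\beta+dd^c\rho+dd^c v)^n\rangle \;=\; c\int_X\beta^n .
$$
Dividing by $\int_X\beta^n$, which is strictly positive by assumption, forces $c\geq 1$.

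There is essentially no obstacle: the only ingredient is the total-mass identity characterizing $\mathcal{E}(X,\beta+dd^c\rho)$ (equivalently, the displayed computation immediately following the definition of $\mathcal{E}(X,\beta)$, applied with $\rho$ in place of $0$, as recorded in Remark \ref{rem: second_version}). One might be tempted instead to argue by contradiction: if $0<c<1$, then the hypothesis holds in particular on the set $\{u<v\}$, and the Domination Principle (Theorem \ref{thm: domi_principal}) would yield $u\geq v$; but Theorem \ref{thm: domi_principal} requires the additional assumption $\beta+dd^c\rho\geq 0$, which is not imposed in this corollary, and in any case $u\geq v$ alone does not obviously close the argument. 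The integration proof above is therefore both shorter and valid in the stated generality.
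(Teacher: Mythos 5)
Your proof is correct, and it is genuinely different from the one in the paper: you integrate the measure inequality over $X$ and invoke only the total-mass identity $\int_X\langle(\beta+dd^c\rho+dd^c u)^n\rangle=\int_X\langle(\beta+dd^c\rho+dd^c v)^n\rangle=\int_X\beta^n>0$ that defines membership in $\mathcal{E}(X,\beta+dd^c\rho)$, whereas the paper argues by contradiction through the Domination Principle (Theorem \ref{thm: domi_principal}): assuming $c<1$, the hypothesis restricted to $\{u<v+a\}$ gives $\langle\beta_{\rho+u}^n\rangle\leq c\,\langle\beta_{\rho+v+a}^n\rangle$ there, so domination yields $u\geq v+a$ for every $a\in\mathbb{R}$, a contradiction. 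Your route is shorter and more elementary, requiring none of the envelope machinery underlying Theorem \ref{thm: domi_principal}; the paper's route is presumably chosen for stylistic parallelism with Corollary \ref{cor: comparison1}, which does genuinely need domination. One small correction to your closing remark: the assumption $\beta+dd^c\rho\geq 0$ in Theorem \ref{thm: domi_principal} is not actually missing here, since $\rho\in\PSH{X}{\beta}\cap L^\infty(X)$ is a standing hypothesis of the whole section and $\beta+dd^c\rho\geq 0$ is precisely what it means for $\rho$ to be $\beta$-psh; also the paper does close the contradiction argument by applying domination with $v$ replaced by $v+a$ for arbitrary $a$, so $u\geq v$ is not the endpoint. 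So the paper's proof is valid as written, but yours is still the cleaner of the two.
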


\begin{proof}
	
	If, in contradiction, $c < 1$. Then for each $a \in \mathbb{R}$, we have 
	$$
	\langle (\beta + dd^c \rho + dd^c u)^n  \rangle \leq c 
    \langle (\beta + dd^c \rho + dd^c (v + a))^n \rangle
	$$
	on $\{u < v + a\}$. By the Domination Principle (Theorem \ref{thm: domi_principal}), this implies $u \geq v + a$ for every $a \in \mathbb{R}$, which is a contradiction. We infer that $c \geq 1$.

\end{proof}

\section{Continuity of non-pluripolar Monge-Amp\`ere measures}
	Let $(X, \omega)$ be a Hermitian manifold and let $\beta$ be a closed real $(1, 1)$ form with $\int_X\beta^n>0$ and   there exists a bounded $\beta$-psh function $\rho$. 
In this section, we provide the proof of Theorem \ref{thm: main-1}, which corresponds to Theorem \ref{thm: maintheorem} stated below. Additionally, we prepare several results that will be used in the proof of Corollary \ref{cor: main}.

\begin{thm} [=Theorem \ref{thm: main-1}]\label{thm: maintheorem}
	Let $u_j \in \mathcal{E}(X, \beta + dd^c \rho)$ be such that $\langle (\beta + dd^c \rho + dd^c u_j)^n \rangle \leq \mu$ for some non-pluripolar Radon measure $\mu$. If $u_j \rightarrow u \in PSH(X, \beta + dd^c \rho)$ in $L^1(X)$, then $u \in \mathcal{E}(X, \beta + dd^c \rho)$ and $u_j \rightarrow u$ in capacity.
Furthermore, $\langle(\beta+dd^cu_j)^n\rangle$   converges weakly to $\langle(\beta+dd^cu)^n\rangle$.

\end{thm}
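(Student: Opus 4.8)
Since $w\in\mathcal E(X,\beta)$ if and only if $w-\rho\in\mathcal E(X,\beta+dd^c\rho)$ and the associated non-pluripolar products correspond under $w\mapsto w-\rho$, the plan is to prove the statement with $\beta$ replaced by the semipositive form $\theta:=\beta+dd^c\rho\ge0$ (which has the bounded potential $0$); I write $\theta_\varphi:=\theta+dd^c\varphi$. Using the $L^1$-compactness of $\{\varphi\in\PSH{X}{\theta}:\sup_X\varphi=0\}$ together with Hartogs' lemma, $\sup_X u_j$ stays bounded, so after subtracting constants I may assume $u_j\le0$ and $u\le0$. The proof then splits into three steps: (i) $u\in\mathcal E(X,\theta)$; (ii) $u_j\to u$ in capacity; (iii) $\langle\theta_{u_j}^n\rangle\to\langle\theta_u^n\rangle$ weakly. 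Each statement concerns the full sequence but is stable under passing to subsequences, so I will freely pass to a subsequence along which $u_j\to u$ pointwise almost everywhere, recovering the full claim at the end in the usual way.

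\textbf{Step (i): $u\in\mathcal E(X,\theta)$.} The idea, following \cite{ALS24}, is to squeeze $u$ from below by an increasing family in $\mathcal E(X,\theta)$. For fixed $j$ and $m\ge j$, iterating Lemma \ref{lem: min1} gives $P_{\theta}(\min(u_j,\dots,u_m))\in\mathcal E(X,\theta)$, and iterating Corollary \ref{cor: contactieq} (in its $\beta+dd^c\rho$-version, see the remark following it) gives $\langle\theta_{P_{\theta}(\min(u_j,\dots,u_m))}^n\rangle\le\mu$. Letting $m\to+\infty$, Theorem \ref{thm: key}(1) identifies the decreasing limit with $\psi_j:=P_{\theta}(\inf_{k\ge j}u_k)$; one checks, as in \cite{ALS24} and using Theorem \ref{thm: put_no_mass4} (its $\beta+dd^c\rho$-version, Remark \ref{rem: second_version}(2)) together with the full-mass property and the finiteness of $\mu$, that $\psi_j$ is a genuine function with $\langle\theta_{\psi_j}^n\rangle\le\mu$. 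Since $\psi_j\le\inf_{k\ge j}u_k\le u_k$ quasi-everywhere for every $k\ge j$, and since $\mathcal E(X,\theta)$ is stable from above (Step 2 in the proof of Corollary \ref{cor: convex}), one gets $\psi_j\in\mathcal E(X,\theta)$; moreover the $\psi_j$ are nondecreasing, so $\psi:=(\sup_j\psi_j)^*\le u$. From $\psi\ge\psi_1\in\mathcal E(X,\theta)$ and $\psi\le u$, the same stability applied twice yields first $\psi\in\mathcal E(X,\theta)$ and then $u\in\mathcal E(X,\theta)$; an application of the domination principle (Theorem \ref{thm: domi_principal}, legitimate now that $\psi,u\in\mathcal E(X,\theta)$ and $\theta\ge0$) on the empty set $\{\psi>u\}$ in fact gives $\psi=u$.

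\textbf{Steps (ii) and (iii).} For (ii), set $\tilde u_j:=(\sup_{k\ge j}u_k)^*$; Hartogs' lemma gives $\tilde u_j\downarrow u$, and $\tilde u_j\ge u_j\in\mathcal E(X,\theta)$, so $\tilde u_j\in\mathcal E(X,\theta)$. Using $\psi_j\le u_j$ quasi-everywhere, $\psi_j\uparrow u$, and $u_j\le\tilde u_j$ from Step (i), one has, up to a pluripolar set,
$$
\{|u_j-u|>\varepsilon\}\subseteq\{\tilde u_j-u>\varepsilon\}\cup\{u-\psi_j>\varepsilon\},
$$
and the right-hand sets have $\mathrm{Cap}_\omega$ tending to $0$ because monotone sequences of $\theta$-psh functions converge to their limit in capacity; hence $u_j\to u$ in capacity. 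For (iii), fix $\chi\in C^0(X)$ and $t>0$: split $\langle\theta_{u_j}^n\rangle$ into its restriction to $\{u_j>-t\}$, which equals $\mathbf 1_{\{u_j>-t\}}(\theta+dd^c\max(u_j,-t))^n$ by plurifine locality (Corollary \ref{cor: plurifine locality_2}), and a tail on $\{u_j\le-t\}$. Since $\max(u_j,-t)\to\max(u,-t)$ in capacity with uniform bounds, the truncated measures converge weakly to $\mathbf 1_{\{u>-t\}}(\theta+dd^c\max(u,-t))^n$ by the continuity of the Monge--Amp\`ere operator along uniformly bounded capacity-convergent sequences; and the tails are uniformly small because $\psi_j\le u_k$ for $k\ge j$ forces $\{u_k\le-t\}\subseteq\{\psi_j\le-t\}$ up to a pluripolar set, while $\mu$ and $\theta^n$ put no mass on pluripolar sets, so $\sup_k\langle\theta_{u_k}^n\rangle(\{u_k\le-t\})\to0$ as $t\to+\infty$. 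Letting $j\to+\infty$ and then $t\to+\infty$ gives $\int_X\chi\,\langle\theta_{u_j}^n\rangle\to\int_X\chi\,\langle\theta_u^n\rangle$.

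\textbf{Main obstacle.} The delicate point is Step (ii): upgrading $L^1$-convergence to convergence in capacity for \emph{unbounded} $\theta$-psh functions when $\rho$ is only assumed bounded (not smooth, not semipositive), so that the classical smooth-envelope estimates of \cite{ALS24} are unavailable. Everything must be rerouted through the new envelope $P_{\beta+dd^c\rho}$ of Definition \ref{defn: new envelope}, whose essential features were developed for exactly this purpose — commutation with decreasing limits (Theorem \ref{thm: key}), preservation of monotonicity, and concentration of Monge--Amp\`ere mass on the contact set (Theorem \ref{thm: put_no_mass4}, Remark \ref{rem: second_version}(2)) — after which the monotone-envelope squeeze, the stability of $\mathcal E(X,\theta)$, and the domination principle carry the argument through.
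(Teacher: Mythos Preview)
Your overall architecture matches the paper's: build the lower envelopes $\psi_j:=P_\theta(\inf_{k\ge j}u_k)$ as decreasing limits of $v_{j,k}:=P_\theta(\min(u_j,\dots,u_k))$, sandwich $u_j$ between $\psi_j$ and $\tilde u_j:=(\sup_{k\ge j}u_k)^*$, and appeal to monotone convergence in capacity. But there is a genuine gap at the claim $\psi=u$, on which your Step~(ii) squeeze hinges.

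Your justification --- ``domination principle on the empty set $\{\psi>u\}$'' --- is backwards. With the hypothesis of Theorem~\ref{thm: domi_principal} vacuously satisfied on $\{u<\psi\}=\emptyset$ you only recover $u\ge\psi$, which you already knew; to obtain $\psi\ge u$ you need $\langle\theta_\psi^n\rangle\le c\,\langle\theta_u^n\rangle$ on the a~priori non-empty set $\{\psi<u\}$, and nothing you wrote provides this. The paper (following \cite{ALS24}) supplies exactly the missing ingredient via the $e^{u_j}$-machinery: pass to a subsequence with $\int_X|e^{u_j}-e^u|\,d\mu\le2^{-j}$ (legitimate since $e^{u_j}\to e^u$ in $L^1(\mu)$ by \cite[Corollary~2.2]{KN22}), use Lemma~\ref{lem: ineq_3} to propagate this to $\int_X|e^{v_{j,k}}-e^u|\,\langle\theta_{v_{j,k}}^n\rangle\le2^{-j+1}$, let $k\to\infty$ and then $j\to\infty$ (via Theorem~\ref{thm: main3}) to obtain $\int_X|e^\psi-e^u|\,\langle\theta_\psi^n\rangle=0$, and \emph{only then} apply Theorem~\ref{thm: domi_principal} with $c=0$ on $\{\psi<u\}$. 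The same estimates, combined with the non-vanishing Lemma~\ref{lem: non-vanishing}, are also what actually force $\sup_X v_{j,k}\not\to-\infty$ and hence make $\psi_j$ a genuine function --- your deferral ``as in \cite{ALS24}'' hides precisely this input. Relatedly, the clause ``$\psi_j\le u_k$ and stability from above give $\psi_j\in\mathcal E(X,\theta)$'' has the implication reversed: upward stability of $\mathcal E$ places the \emph{larger} function in $\mathcal E$, not the smaller one; the paper instead invokes its independent Lemma~\ref{lem: main2} (the $P_\theta(Au)$-characterization of $\mathcal E$, proved separately) applied to the decreasing family $(v_{j,k})_k$ to secure $\psi_j\in\mathcal E(X,\theta)$.
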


The proof of Theorem \ref{thm: maintheorem} follows the steps outlined in \cite[Theorem 3.3]{ALS24}.

\begin{thm} \label{thm: main3}
	Assume $u_j, u \in \mathcal{E}(X, \beta + dd^c \rho)$ are such that $u_j \rightarrow u$ in capacity. Assume $h_j$ is a sequence of uniformly bounded quasi-continuous functions converging in capacity to a bounded quasi-continuous function $h$. Then
	$$
	h_j \langle (\beta + dd^c \rho + dd^c u_j)^n \rangle \rightarrow h \langle (\beta + dd^c \rho + dd^c u)^n \rangle.
	$$
\end{thm}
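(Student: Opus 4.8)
The plan is to reduce the convergence of the weighted measures to the convergence of the underlying non-pluripolar Monge-Amp\`ere masses, and then to quantify the latter using a truncation argument together with the continuity results for bounded functions already available in the literature. First I would replace each $u_j$ by $u_j^k := \max(u_j, \rho - k)$ (equivalently, work with $(\beta + dd^c\rho)$-psh truncations $\max(u_j, -k)$ under the correspondence of Definition \ref{defn: new envelope}) and similarly $u^k := \max(u, \rho - k)$; these are uniformly bounded on each level $k$, and $u_j^k \to u^k$ in capacity by the elementary fact that $|u_j^k - u^k| \leq |u_j - u|$, so $\{|u_j^k - u^k| > \varepsilon\} \subseteq \{|u_j - u| > \varepsilon\}$. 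For the bounded truncations, the Bedford-Taylor type continuity under convergence in capacity (the Hermitian version recorded e.g. in \cite{KN22}, or \cite{X09} in the K\"ahler case, applied together with quasi-continuity of $h_j, h$ as in \cite[Theorem 2.6]{DDL23}) gives $h_j(\beta + dd^c\rho + dd^c u_j^k)^n \to h(\beta + dd^c\rho + dd^c u^k)^n$ weakly, for each fixed $k$.

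The second and main ingredient is to control the difference between the truncated and the genuine non-pluripolar measures, uniformly in $j$. Here I would use the hypothesis $\langle(\beta + dd^c\rho + dd^c u_j)^n\rangle \leq \mu$ together with the locality of the non-pluripolar product on the plurifine-open set $\{u_j > \rho - k\}$: by the plurifine locality (Lemma \ref{plurifine locality_1}, Theorem \ref{thm: plurifine_locality_3}),
$$
\ind{\{u_j > \rho - k\}}(\beta + dd^c\rho + dd^c u_j^k)^n = \ind{\{u_j > \rho - k\}}\langle(\beta + dd^c\rho + dd^c u_j)^n\rangle,
$$
so the mass that the truncation $(\beta + dd^c\rho + dd^c u_j^k)^n$ places on $\{u_j \leq \rho - k\}$ equals the total mass $\int_X\beta^n$ minus $\int_{\{u_j > \rho - k\}}\langle(\beta + dd^c\rho + dd^c u_j)^n\rangle$. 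The crucial point is that this tail mass tends to $0$ as $k \to \infty$, \emph{uniformly in $j$}. To get uniformity I would argue that $u_j \to u$ in capacity (already established in Theorem \ref{thm: maintheorem}, which we may invoke) forces $\{u_j \leq \rho - k\}$ to have capacity controlled, for $j$ large, by the capacity of $\{u \leq \rho - k + 1\}$ which is small for $k$ large; combined with $\langle\beta_{u_j}^n\rangle \leq \mu$ and the fact that $\mu$ is a fixed non-pluripolar (hence, by quasi-continuity/Radon regularity, capacity-absolutely-continuous in the appropriate sense) measure, the masses $\mu(\{u_j \leq \rho - k\})$ are uniformly small. The finitely many remaining indices $j$ are handled individually since each $u_j \in \mathcal{E}(X, \beta + dd^c\rho)$ has, by definition, vanishing tail mass.

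Finally I would assemble the pieces with an $\varepsilon/3$ argument: given a test form (a continuous function) $\chi$ and $\varepsilon > 0$, choose $k$ so large that both $\int_{\{u_j \leq \rho - k\}}$-type tails and the corresponding tail for $u$ contribute less than $\varepsilon/3$ in absolute value to $\int \chi\, h_j\, d(\cdots)$, uniformly in $j$ and using $\|h_j\|_\infty \leq C$; then for that fixed $k$ use the bounded-case weak convergence to make $\left|\int \chi\, h_j (\beta + dd^c\rho + dd^c u_j^k)^n - \int \chi\, h(\beta + dd^c\rho + dd^c u^k)^n\right| < \varepsilon/3$ for $j$ large; the third $\varepsilon/3$ absorbs the $u$-side truncation error. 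I expect the main obstacle to be precisely the \emph{uniform in $j$} smallness of the tail masses $\int_{\{u_j \leq \rho - k\}}(\beta + dd^c\rho + dd^c u_j^k)^n$: pointwise in $j$ it is the definition of the full-mass class, but uniformity genuinely uses the domination $\langle\beta_{u_j}^n\rangle \leq \mu$ and convergence in capacity, and one must be careful that $\mu$ being non-pluripolar translates into $\sup_j \mu(\{u_j \leq \rho - k\}) \to 0$; I would prove this by contradiction, extracting a subsequence along which this mass stays bounded below, passing to the $L^1$-limit $u$, and contradicting either $u \in \mathcal{E}$ (Theorem \ref{thm: maintheorem}) or the fact that $\mu$ charges no pluripolar set.
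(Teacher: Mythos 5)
You have confused the hypotheses of Theorem \ref{thm: main3} with those of Theorem \ref{thm: maintheorem}. Theorem \ref{thm: main3} assumes only that $u_j, u \in \mathcal{E}(X, \beta + dd^c\rho)$ and that $u_j \to u$ \emph{in capacity}; there is no domination hypothesis $\langle(\beta + dd^c\rho + dd^c u_j)^n\rangle \leq \mu$. Yet the bound $\langle\beta_{u_j}^n\rangle \leq \mu$ is the engine of your ``second and main ingredient,'' so that step has no foundation in the stated hypotheses. You also say that you would obtain $u_j \to u$ in capacity by invoking Theorem \ref{thm: maintheorem}; but this is already a hypothesis of the present statement, and in any case appealing to Theorem \ref{thm: maintheorem} would be circular, since Lemma \ref{lem: main4} (and hence Theorem \ref{thm: maintheorem}) relies on Theorem \ref{thm: main3}.

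Even granting a $\mu$-bound, the tail estimate you sketch is not justified: non-pluripolarity of $\mu$ gives $\mu(\{u \leq \rho - k\}) \to 0$ by monotone convergence down to the pluripolar set $\{u = -\infty\}$, but it does not make $\mu$ absolutely continuous with respect to capacity, so smallness of $\mathrm{Cap}(\{u_j \leq \rho - k\})$ does not by itself control $\mu(\{u_j \leq \rho - k\})$. Concretely, to handle the cross term $\mu(\{|u_j - u| > 1\})$ you would need $u_j \to u$ in $\mu$-measure, which does not follow from convergence in capacity for an arbitrary non-pluripolar Radon measure; Lemma \ref{lem: convergence} supplies this only for a subsequence of a \emph{uniformly bounded} family, neither of which is available here. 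The uniform-in-$j$ tail bound is exactly the hard point, and your proposal does not close it.

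The paper's proof is far shorter and different in nature: it invokes \cite[Theorem 2.6]{DDL23} (equivalently \cite[Theorem 2.6]{LLZ24}), a black-box continuity result for non-pluripolar Monge--Amp\`ere measures under convergence in capacity with quasi-continuous weights. That result reduces the desired weak convergence to checking
\[
\int_X \langle(\beta + dd^c\rho + dd^c u)^n\rangle \;\geq\; \limsup_{j\to\infty}\int_X\langle(\beta + dd^c\rho + dd^c u_j)^n\rangle,
\]
which holds trivially with equality, both sides being $\int_X\beta^n$ since every $u_j$ and $u$ lies in the full mass class $\mathcal{E}(X, \beta + dd^c\rho)$. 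Your truncation-and-tail scheme is the kind of argument that goes \emph{into} a proof of the cited theorem, but reproducing it from scratch requires a genuine uniform tail estimate that your proposal does not supply.
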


\begin{proof}
	Let $\Theta$ be a cluster point of $(\beta + dd^c \rho + dd^c u_j)^n$ for the weak topology. We need to prove that $\Theta = (\beta + dd^c \rho + dd^c u)^n$.
	
	According to \cite[Theorem 2.6]{DDL23} \cite[Theorem 2.6]{LLZ24}, we need to show that
	$$
	\int_{X} \langle (\beta + dd^c \rho + dd^c u)^n \rangle \geq \limsup_{j \to +\infty} \int_{X} \langle (\beta + dd^c \rho + dd^c u_j)^n \rangle,
	$$
	which is evident since $u_j, u \in \mathcal{E}(X, \beta + dd^c \rho)$, and both sides of the above inequality equal $\int_{X}\beta^n$.
\end{proof}

\begin{rem}
Theorem \ref{thm: main3} states that the non-pluripolar Monge-Amp\`ere measure is continuous with respect to convergence in capacity.

	\end{rem}

\begin{lem} \label{lem: non-vanishing}
	Let $u_j \in \mathcal{E}(X, \beta + dd^c \rho)$ be such that $\langle (\beta + dd^c \rho + dd^c u_j)^n \rangle \leq \mu$ for some non-pluripolar Radon measure $\mu$. Then for any $v \in PSH(X, \beta + dd^c \rho)$, we have
	$$
	\inf_{j} \int_{X} e^{v} \langle (\beta + dd^c \rho + dd^c u_j)^n \rangle > 0.
	$$
\end{lem}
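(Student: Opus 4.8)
The plan is to argue by contradiction. Suppose $\inf_j \int_X e^v\,\langle(\beta+dd^c\rho+dd^c u_j)^n\rangle = 0$; passing to a subsequence we may assume $\int_X e^v\,\langle(\beta+dd^c\rho+dd^c u_j)^n\rangle \to 0$. Write $\nu_j := \langle(\beta+dd^c\rho+dd^c u_j)^n\rangle$, and record the one structural fact we need: since $u_j \in \mathcal{E}(X,\beta+dd^c\rho)$, Definition \ref{class2} gives $\nu_j(X) = \int_X\beta^n =: c > 0$ for every $j$. The strategy is to split this fixed total mass across the superlevel sets of $v$: on $\{v>t\}$ the mass of $\nu_j$ is controlled by the vanishing integral through Chebyshev's inequality, while on $\{v\le t\}$ it is controlled by $\mu$, which is small for very negative $t$ because $\mu$ does not charge pluripolar sets.

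First I would note that $v$, being $(\beta+dd^c\rho)$-psh on the compact manifold $X$, is bounded above, so $e^v$ is a bounded nonnegative (quasi-continuous) function and all the integrals below are finite. Then, for a fixed $t\in\mathbb{R}$, Chebyshev's inequality gives
$$\nu_j(\{v>t\}) = \nu_j(\{e^v>e^t\}) \le e^{-t}\int_{\{e^v>e^t\}} e^v\,d\nu_j \le e^{-t}\int_X e^v\,d\nu_j \longrightarrow 0 \quad (j\to\infty).$$
For the complementary piece, $\nu_j\le\mu$ yields $\nu_j(\{v\le t\}) \le \mu(\{v\le t\})$. As $t\to-\infty$ the sets $\{v\le t\}$ decrease to $\{v=-\infty\}$; since $\rho$ is bounded this is $\{v+\rho=-\infty\}$, which is a pluripolar set (globally pluripolar by Lemma \ref{lem: pluripolar}, as $v+\rho\in PSH(X,\beta)$). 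Because $X$ is compact, $\mu$ is finite, and by hypothesis it puts no mass on pluripolar sets, so $\mu(\{v\le t\})\downarrow\mu(\{v=-\infty\})=0$ as $t\to-\infty$; fix $t_0$ with $\mu(\{v\le t_0\})<c/2$.

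Finally, for this fixed $t_0$ and every $j$,
$$c = \nu_j(X) = \nu_j(\{v>t_0\}) + \nu_j(\{v\le t_0\}) \le \nu_j(\{v>t_0\}) + \tfrac{c}{2},$$
hence $\nu_j(\{v>t_0\})\ge c/2$ for all $j$, contradicting the limit in the first step. This contradiction proves the lemma. The only point requiring a little care is the claim $\mu(\{v=-\infty\})=0$, which depends on identifying $\{v=-\infty\}$ as (globally) pluripolar and on $\mu$ being non-pluripolar; once the invariant masses $\nu_j(X)=\int_X\beta^n$ are in hand, the remainder is elementary measure theory, and I do not anticipate any serious obstacle.
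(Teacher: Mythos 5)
Your proof is correct and is essentially the same as the paper's: both arguments use the constant total mass $\nu_j(X)=\int_X\beta^n$, split it across $\{v\le t\}$ and $\{v>t\}$, bound the first piece by $\mu(\{v\le t\})$ (small for $t\ll 0$ since $\mu$ is non-pluripolar and $\{v=-\infty\}$ is pluripolar) and the second by a Chebyshev estimate against $\int e^v\,d\nu_j\to 0$. The only difference is cosmetic — you fix a threshold $t_0$ before letting $j\to\infty$, whereas the paper lets $j\to\infty$ first and then sends the threshold to $-\infty$; you also spell out the pluripolarity step that the paper leaves implicit.
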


\begin{proof}
	Assume, for the sake of contradiction, that
	$$
	\inf_{j} \int_{X} e^{v} \langle (\beta + dd^c \rho + dd^c u_j)^n \rangle = 0.
	$$
	By possibly extracting a subsequence, we can assume without loss of generality that
	$$
	\lim_{j \to +\infty} \int_{X} e^v \langle (\beta + dd^c \rho + dd^c u_j)^n \rangle  = 0.
	$$
	
	Fix $c > 0$. We then have
	\begin{equation*}
	\begin{split}
	\int_{X} \beta^n &= \int_{X} \langle (\beta + dd^c \rho + dd^c u_j)^n \rangle\\
	&= \int_{\{v \leq -c\}} \langle (\beta + dd^c \rho + dd^c u_j)^n \rangle + \int_{\{v > -c\}} \langle (\beta + dd^c \rho + dd^c u_j)^n \rangle \\
	&\leq \mu(\{v \leq -c\}) + e^c \int_{X} e^v \langle (\beta + dd^c \rho + dd^c u_j)^n \rangle.
	\end{split}
	\end{equation*}
	
	First, let $j \to +\infty$, and then let $c \to +\infty$. The right-hand side approaches zero, which contradicts the fact that $\int_{X} \beta^n > 0$.
\end{proof}

\begin{lem} \label{lem: main2}
	Assume $u_j \in \mathcal{E}(X, \beta + dd^c \rho)$ satisfy the hypothesis of Theorem \ref{thm: maintheorem}, and $u_j \rightarrow u \in PSH(X, \beta + dd^c \rho)$ in $L^1(X)$. Then $u \in \mathcal{E}(X, \beta + dd^c \rho)$.
\end{lem}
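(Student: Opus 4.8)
The plan is to verify the envelope criterion of Proposition \ref{prop: chac_class}: it suffices to show that $P_{\beta+dd^c\rho}(Au)\in\PSH{X}{\beta+dd^c\rho}$ for every $A\ge 1$. Fix $A\ge1$ and set $\phi_j:=P_{\beta+dd^c\rho}(Au_j)$. Since $u_j\in\mathcal{E}(X,\beta+dd^c\rho)$, Corollary \ref{cor: A} gives $\phi_j\in\mathcal{E}(X,\beta+dd^c\rho)$, so in particular $\int_X\langle(\beta+dd^c\rho+dd^c\phi_j)^n\rangle=\int_X\beta^n$. Moreover $Au_j$ is quasi-continuous and $\phi_j\in\PSH{X}{\beta+dd^c\rho}$, so the $(\beta+dd^c\rho)$-version of Theorem \ref{thm: put_no_mass4} (Remark \ref{rem: second_version}) shows $\phi_j\le Au_j$ quasi-everywhere and that $\langle(\beta+dd^c\rho+dd^c\phi_j)^n\rangle$ lives on the contact set $\{\phi_j=Au_j\}$. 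Running the computation in the proof of Proposition \ref{prop: chac_class} with non-pluripolar products in place of the classical ones (using $A^{-1}\phi_j\le u_j$ on $X$, the $(\beta+dd^c\rho)$-version of Corollary \ref{cor: comparison}, and $\beta+dd^c\rho\ge0$) one obtains, on $\{\phi_j=Au_j\}$, the estimate $\langle(\beta+dd^c\rho+dd^c\phi_j)^n\rangle\le A^n\langle(\beta+dd^c\rho+dd^cu_j)^n\rangle\le A^n\mu$; hence $\langle(\beta+dd^c\rho+dd^c\phi_j)^n\rangle\le A^n\mu$ for all $j$, with $A^n\mu$ still a non-pluripolar Radon measure.

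The key step is to prove that $\sup_X\phi_j$ is bounded below. Suppose not; passing to a subsequence, $m_j:=\sup_X\phi_j\to-\infty$. On $\{\phi_j=Au_j\}$ we have $u_j=A^{-1}\phi_j\le A^{-1}m_j$, so for any fixed $t>0$ and all $j$ with $m_j<-At$ the contact set is contained in $\{u_j<-t\}$, whence
$$\int_X\beta^n=\langle(\beta+dd^c\rho+dd^c\phi_j)^n\rangle(\{\phi_j=Au_j\})\le A^n\mu(\{u_j<-t\}).$$
A diagonal choice then yields indices $j_k\to\infty$ with $\mu(\{u_{j_k}<-k\})\ge A^{-n}\int_X\beta^n>0$. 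Following \cite[Theorem 2.5, Theorem 3.3]{ALS24}, one derives a contradiction from this using Lemma \ref{lem: non-vanishing}: after the normalisation $\tilde\phi_j:=\phi_j-m_j$ (which keeps $\sup_X\tilde\phi_j=0$ and does not change the non-pluripolar Monge--Amp\`ere measure) one extracts an $L^1$-limit $\tilde\phi\in\PSH{X}{\beta+dd^c\rho}$, $\tilde\phi\not\equiv-\infty$; Lemma \ref{lem: non-vanishing} applied to $\{\tilde\phi_j\}$ with $v=\tilde\phi$ gives $\inf_j\int_Xe^{\tilde\phi}\langle(\beta+dd^c\rho+dd^c\tilde\phi_j)^n\rangle>0$, while the concentration of these measures on the shrinking sublevel sets of $u_j$, together with the Bedford--Taylor convergence theorem and \cite[Theorem 2.6]{DDL23}, forces the same integrals to $0$. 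This non-escape step, which is exactly where the hypothesis $\langle(\beta+dd^c\rho+dd^cu_j)^n\rangle\le\mu$ enters essentially, is the part I expect to be the main obstacle.

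Once $\sup_X\phi_j$ is bounded below, the family $\{\phi_j\}$ is relatively compact in $L^1(X)$ (the functions are uniformly bounded above by $A\sup_X u_j$ and their suprema are uniformly bounded below), so a subsequence converges in $L^1(X)$ to some $\phi\in\PSH{X}{\beta+dd^c\rho}$ with $\phi\not\equiv-\infty$ and $\phi=(\limsup_j\phi_j)^*$. For each $j$ we have $\phi_j\le Au_j$ off a pluripolar set, and by the standard properties of $L^1$-limits of quasi-plurisubharmonic functions $u=(\limsup_j u_j)^*$, so $\limsup_j u_j\le u$; hence $\limsup_j\phi_j\le Au$ off a pluripolar set, and Lemma \ref{lem: neg} gives $\phi\le Au$ quasi-everywhere. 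Consequently $\phi+\rho\in\PSH{X}{\beta}$ satisfies $\phi+\rho\le Au+\rho$ quasi-everywhere and $\phi+\rho\not\equiv-\infty$, so by Lemma \ref{lem: key1} and Remark \ref{rem: equ of env} the envelope $\env{Au+\rho}{\beta}$ belongs to $\PSH{X}{\beta}$; by Theorem \ref{thm: key}, $P_{\beta+dd^c\rho}(Au)=\env{Au+\rho}{\beta}-\rho\in\PSH{X}{\beta+dd^c\rho}$. As $A\ge1$ was arbitrary, Proposition \ref{prop: chac_class} yields $u\in\mathcal{E}(X,\beta+dd^c\rho)$.
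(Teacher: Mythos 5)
Your overall scaffolding matches the paper's: reduce to the envelope criterion of Proposition \ref{prop: chac_class}, set $\phi_j=P_{\beta+dd^c\rho}(Au_j)$, bound $\langle(\beta+dd^c\rho+dd^c\phi_j)^n\rangle\le A^n\mu$, rule out uniform escape $\sup_X\phi_j\to-\infty$, and pass to a limit. The last paragraph (passing from a bounded-above family to the envelope) is also fine. The gap is in the non-escape step, exactly where you flagged uncertainty.

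The choice $v=\tilde\phi$ in Lemma \ref{lem: non-vanishing} cannot yield a contradiction. That lemma gives the \emph{true} statement $\inf_j\int_X e^{\tilde\phi}\langle(\beta+dd^c\rho+dd^c\tilde\phi_j)^n\rangle>0$, and the opposing estimate you hope for is false: by your own inequality $\int_X\beta^n\le A^n\mu(\{u_j<-t\})$, the measures $\langle(\beta+dd^c\rho+dd^c\tilde\phi_j)^n\rangle$ (which are unchanged by the shift $\phi_j\mapsto\phi_j-m_j$) charge the contact set $\{\phi_j=Au_j\}\subset\{u_j\le A^{-1}m_j\}$ with total mass $\int_X\beta^n$, while $\mu$ restricted to those sets is bounded \emph{below} by $A^{-n}\int_X\beta^n$, not shrinking to $0$. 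Since $e^{\tilde\phi}$ is a fixed function with $\mu(\{\tilde\phi\le -c\})\to 0$ as $c\to\infty$ (as $\mu$ is non-pluripolar), the integrals $\int e^{\tilde\phi}\langle\cdots\rangle$ stay bounded away from $0$ --- so no contradiction can arise this way, and Bedford--Taylor convergence does not rescue it because the relevant sublevel sets do not shrink in $\mu$-measure.

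The ingredient you are missing is the one that injects the $L^1(X)$ convergence of $u_j$ into an $L^1(\mu)$ statement: after normalising $u_j\le 0$, the bounded functions $e^{u_j}$ are $(\beta+dd^c\rho)$-psh, hence $e^{u_j}+\rho$ are $\beta$-psh, and \cite[Corollary 2.2]{KN22} gives $\int_X|e^{u_j}-e^u|\,d\mu\to 0$. With this, the paper takes $v=u$ in Lemma \ref{lem: non-vanishing}: on the contact set $e^{A^{-1}\phi_j}=e^{u_j}$, so $\int_X|e^{A^{-1}\phi_j}-e^u|\langle(\beta+dd^c\rho+dd^c\phi_j)^n\rangle\le A^n\int_X|e^{u_j}-e^u|\,d\mu\to 0$; if $\sup_X\phi_j\to-\infty$ then also $\int_X e^{A^{-1}\phi_j}\langle\cdots\rangle\le A^n\int_X e^{A^{-1}\phi_j}\,d\mu\to 0$, and the two together force $\int_X e^{u}\langle(\beta+dd^c\rho+dd^c\phi_j)^n\rangle\to 0$, contradicting Lemma \ref{lem: non-vanishing} with $v=u$. (Equivalently, your diagonal inequality $\mu(\{u_{j_k}<-k\})\ge A^{-n}\int_X\beta^n$ combined with $\int_X|e^{u_{j_k}}-e^u|\,d\mu\to 0$ and $\mu(\{u\le \log\delta\})\to 0$ already gives a contradiction --- but this again uses \cite[Corollary 2.2]{KN22}, which your write-up never invokes.) So the plan is essentially right, but the non-escape step needs to replace $v=\tilde\phi$ by $v=u$ and supply the $L^1(\mu)$ convergence of $e^{u_j}$.
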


\begin{proof}
	According to Proposition \ref{prop: chac_class}, it suffices to prove that $P_{\beta+dd^c \rho}(Au) \in PSH(X, \beta + dd^c \rho)$ for every $A \geq 1$. Fix $A \geq 1$ and set $\varphi_j = P_{\beta + dd^c \rho}(Au_j) \in PSH(X, \beta + dd^c \rho)$, since $u_j \in \mathcal{E}(X, \beta + dd^c \rho)$.
	
	Theorem \ref{thm: put_no_mass4} implies that $(\beta + dd^c \rho + dd^c \varphi_j)^n$ is carried by $\{\varphi_j = Au_j\}$ and that $\varphi_j \leq Au_j$.
	
	Corollary \ref{cor: comparison} implies
	\begin{equation}
	\begin{split}
	\langle(\beta + dd^c \rho + dd^c \varphi_j)^n \rangle &= 1_{\{\varphi_j = Au_j\}} \langle (\beta + dd^c \rho + dd^c \varphi_j)^n \rangle \\
	&\leq 1_{\{\varphi_j = Au_j\}} \langle \left[A(\beta + dd^c \rho) + dd^c \varphi_j \right]^n \rangle \\
	&= 1_{\{A^{-1}\varphi_j = u_j\}} A^n \left\langle \left[(\beta + dd^c \rho) + dd^c \left(\frac{1}{A}\varphi_j\right) \right]^n \right \rangle\\
	&\leq 1_{\{A^{-1}\varphi_j = u_j\}} A^n \langle \left[(\beta + dd^c \rho) + dd^c u_j \right]^n \rangle\\
	&\leq A^n \langle \left[(\beta + dd^c \rho) + dd^c u_j \right]^n \rangle \leq A^n \mu.
	\end{split}
	\end{equation}
	
	Hence, we obtain
	\begin{equation}
	\begin{split}
	\lim_{j \to +\infty} \int_{X} |e^{A^{-1}\varphi_j} - e^{u}| \langle (\beta + dd^c \rho + dd^c \varphi_j)^n \rangle  &= \lim_{j \to +\infty} \int_{\{A^{-1}\varphi_j = u_j\}} |e^{u_j} - e^{u}| \langle (\beta + dd^c \rho + dd^c \varphi_j)^n \rangle\\
	&\leq \lim_{j \to +\infty} A^n \int_{X} |e^{u_j} - e^{u}| d\mu.
	\end{split}
	\end{equation}
	
	Since $\{u_j\}_j$ is uniformly bounded from above, we can assume without loss of generality that $u_j$ are non-positive. It follows that $\{e^{u_j}\}_j$ is uniformly bounded and $e^{u_j} \in PSH(X, \beta + dd^c \rho)$. Moreover, $e^{u_j} +\rho \in PSH(X, \beta) \subseteq PSH(X, \omega)$. By \cite[Corollary 2.2]{KN22}, we have
	\begin{equation*}
	\lim_{j \to +\infty} \int_{X} \left|(e^{u_j} - \rho) - (e^{u} - \rho)\right| d\mu = 0,
	\end{equation*}
	consequently,
	\begin{equation}
	\lim_{j \to +\infty} \int_{X} |e^{A^{-1}\varphi_j} - e^{u}| \langle(\beta + dd^c \rho + dd^c \varphi_j)^n \rangle = 0.
	\end{equation}
	
	Assume, for the sake of contradiction, that $\varphi_j$ converges uniformly to $-\infty$. It follows that
	\begin{equation*}
	\lim_{j \to +\infty} \int_{X} e^{A^{-1}\varphi_j} \langle(\beta + dd^c \rho + dd^c \varphi_j)^n \rangle \leq A^n \lim_{j \to +\infty} \int_{X} e^{A^{-1} \varphi_j} d\mu = 0.
	\end{equation*}
	Therefore,
	\begin{equation*}
	\lim_{j \to +\infty} \int_{X} e^{u} \langle (\beta + dd^c \rho + dd^c \varphi_j)^n \rangle = 0,
	\end{equation*}
	which contradicts Lemma \ref{lem: non-vanishing}.
	
	By the compactness property, the sequence $(\varphi_j)_j$ has a subsequence converging to $\varphi \in PSH(X, \beta + dd^c \rho)$. Since $\varphi_j \leq Au_j$ almost everywhere, we have $\varphi \leq Au$ almost everywhere. Both sides being quasi-psh, this inequality holds everywhere. Hence, $P_{\beta + dd^c \rho}(Au) \in PSH(X, \beta + dd^c \rho)$, which implies $u \in \mathcal{E}(X, \beta + dd^c \rho)$.
\end{proof}

\begin{lem} \label{lem: main4}
	Consider $u_j, u$ as in Theorem \ref{thm: maintheorem}. Then $(u_j)_j$ has a subsequence converging to $u$ in capacity.
\end{lem}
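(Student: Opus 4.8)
\emph{Plan.} I would follow \cite[Theorem 3.3]{ALS24} and, after passing to a subsequence, squeeze $u_j$ between an increasing and a decreasing sequence of $(\beta+dd^c\rho)$-psh functions, each converging to $u$ in capacity; the conclusion is then immediate from the elementary fact that a monotone sequence of quasi-psh functions converges in capacity to its (regularized) limit --- proved by the usual quasi-continuity plus Dini argument, using the outer regularity of $\mathrm{Cap}_\omega$ to absorb the exceptional pluripolar set into a small open set. So first I pass to a subsequence, still denoted $(u_j)$, with $u_j\to u$ almost everywhere and moreover $\sum_j\|u_j-u\|_{L^1(X)}<+\infty$.

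\emph{Upper barrier.} The $\beta$-psh functions $u_j+\rho$ are uniformly bounded above, so $\phi_j:=\bigl(\sup_{k\ge j}(u_k+\rho)\bigr)^{*}\in\PSH{X}{\beta}$ is a decreasing sequence with $\phi_j\ge u_k+\rho$ for all $k\ge j$. Integrating against $\omega^n$ and letting $k\to+\infty$ gives $\phi_j\ge u+\rho$ everywhere; by Lemma~\ref{lem: neg}, $\phi_j=\sup_{k\ge j}(u_k+\rho)$ off a pluripolar set, and $\inf_j\sup_{k\ge j}(u_k+\rho)=\limsup_k(u_k+\rho)=u+\rho$ a.e.; hence $\phi_j\downarrow u+\rho$, so $\phi_j\to u+\rho$ in capacity. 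Since $u_j+\rho\le\phi_j$, we get $\{u_j>u+\varepsilon\}\subseteq\{\phi_j-(u+\rho)>\varepsilon\}$, so $\mathrm{Cap}_\omega(\{u_j>u+\varepsilon\})\to 0$.

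\emph{Lower barrier.} Put $\psi_j:=\inf_{k\ge j}u_k$, the decreasing limit as $N\to+\infty$ of $\min(u_j,\dots,u_N)$. Each $\min(u_j,\dots,u_N)+\rho\in\PSH{X}{\beta}$, and the pointwise bound $\min(u_j,\dots,u_N)\ge u-\sum_{k\ge j}(u-u_k)^{+}$ yields $\int_X(\psi_j+\rho)\,\omega^n\ge\int_X(u+\rho)\,\omega^n-\sum_{k\ge j}\|u_k-u\|_{L^1(X)}>-\infty$; hence $\psi_j+\rho\in\PSH{X}{\beta}$ (not $\equiv-\infty$), i.e.\ $\psi_j\in\PSH{X}{\beta+dd^c\rho}$ and $\psi_j=\env{\psi_j}{\beta+dd^c\rho}$. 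The sequence $\psi_j$ is increasing, $\psi_j\le u_k$ for $k\ge j$ (hence $\psi_j\le u$ a.e., hence everywhere), and $\sup_j\psi_j=\liminf_k u_k=u$ a.e.; so $\psi_j\uparrow u$ and $\psi_j\to u$ in capacity. Since $\psi_j\le u_j$, we get $\{u_j<u-\varepsilon\}\subseteq\{u-\psi_j>\varepsilon\}$, so $\mathrm{Cap}_\omega(\{u_j<u-\varepsilon\})\to 0$. Adding the two inclusions, $u_j\to u$ in capacity along the chosen subsequence.

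The step requiring care --- and the one I expect to be the main obstacle --- is that the tail infimum $\psi_j$ be a genuine, $\not\equiv-\infty$, $(\beta+dd^c\rho)$-psh function; this is exactly why one refines the subsequence so that $\sum_j\|u_j-u\|_{L^1(X)}<+\infty$. Alternatively, as in \cite{ALS24}, one works directly with $\env{\inf_{k\ge j}u_k}{\beta+dd^c\rho}$, which by Theorem~\ref{thm: key}(1) is the decreasing limit of $\env{\min(u_j,\dots,u_N)}{\beta+dd^c\rho}$; by iterating Lemma~\ref{lem: min1} and Corollary~\ref{cor: contactieq} these lie in $\mathcal{E}(X,\beta+dd^c\rho)$ with Monge--Amp\`ere mass $\le\mu$, and here the hypothesis that $\mu$ is a finite non-pluripolar measure is precisely what prevents the Monge--Amp\`ere mass from escaping to $\{-\infty\}$ in the limit --- the analogue of the argument in Lemma~\ref{lem: main2} --- after which the identification of the limit with $u$ and the monotone squeeze proceed as above. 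Everything else (Hartogs' lemma and monotone convergence $\Rightarrow$ convergence in capacity) is routine.
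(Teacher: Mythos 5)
Your primary lower-barrier argument rests on a false premise: $\min(u_j,\dots,u_N)+\rho$ is \emph{not} $\beta$-psh in general, because the minimum (or infimum) of plurisubharmonic functions fails the sub-mean-value inequality wherever the graphs cross. The $L^1$-summability refinement keeps the tail infimum $\psi_j=\inf_{k\ge j}u_k$ from being identically $-\infty$ as a measurable function, but it does not make $\psi_j$ plurisubharmonic, so the step ``$\psi_j\in PSH(X,\beta+dd^c\rho)$ and $\psi_j\uparrow u$ in capacity'' collapses. It is also telling that this primary argument nowhere invokes the hypothesis $\langle(\beta+dd^c\rho+dd^c u_j)^n\rangle\le\mu$, which is what separates the theorem from a bare (and false) ``$L^1$ implies capacity'' statement. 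This is exactly why the paper works with the envelopes $v_j:=P_{\beta+dd^c\rho}(\inf_{k\ge j}u_k)$ rather than the raw infima.

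Your ``alternative'' does use the envelopes and thus matches the paper's setup: the $v_{j,k}:=P_{\beta+dd^c\rho}(\min(u_j,\dots,u_k))$ lie in $\mathcal{E}(X,\beta+dd^c\rho)$ with Monge--Amp\`ere mass $\le\mu$, and Lemma~\ref{lem: non-vanishing} prevents $\sup_X v_{j,k}$ from tending to $-\infty$, so $v_j\in PSH(X,\beta+dd^c\rho)$ and $v:=\sup_j v_j$ exists. But you then assert that ``the identification of the limit with $u$ $\dots$ proceed[s] as above,'' which it does not: since $v_j$ is an envelope you only have $v_j\le\inf_{k\ge j}u_k$ quasi-everywhere, which gives $v\le u$; the reverse inequality $v\ge u$ does not follow from monotone-convergence considerations, because the envelope could a priori drop strictly below the tail infimum on a set of positive measure. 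This is precisely where the hard work lies, and the paper closes the gap by using the hypothesis on $\mu$ a second time: it refines the subsequence so that $\int_X|e^{u_j}-e^u|\,d\mu\le 2^{-j}$, propagates this bound via Lemma~\ref{lem: ineq_3} to $\int_X|e^{v_{j,k}}-e^u|\langle(\beta+dd^c\rho+dd^c v_{j,k})^n\rangle\le 2^{-j+1}$, passes to the limit in $k$ and then in $j$ using the continuity of the non-pluripolar Monge--Amp\`ere measure along monotone-in-capacity sequences (Theorem~\ref{thm: main3}), obtains $\int_X|e^v-e^u|\langle(\beta+dd^c\rho+dd^c v)^n\rangle=0$, and finally invokes the domination principle (Theorem~\ref{thm: domi_principal}) to force $v\ge u$. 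Without this chain of arguments the proof is incomplete.
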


\begin{proof}
	We may assume $u_j \leq 0$ for all $j$. By Lemma \ref{lem: main2}, we have $u \in \mathcal{E}(X, \beta + dd^c \rho)$.
	
	Up to extracting a subsequence, we may assume
	\begin{equation}
	\int_{X} |e^{u_j} - e^u| d\mu \leq 2^{-j}, \quad \forall j \geq 1.
	\end{equation}
	
	For each $k \geq j \geq 1$, consider $v_{j,k} := P_{\beta + dd^c \rho}\left( \min(u_j, \ldots, u_k) \right)$. It follows from Lemmas \ref{cor: contactieq} and \ref{lem: ineq_3} that
	\begin{equation} \label{matrix}
	\left\{
	\begin{array}{ll}
	v_{j,k} \in \mathcal{E}(X, \beta + dd^c \rho), \\
	\langle(\beta + dd^c \rho + dd^c v_{j,k})^n \rangle \leq \mu, \\
	\int_{X} |e^{v_{j,k}} - e^{u}| \langle (\beta + dd^c \rho + dd^c v_{j,k})^n \rangle \leq 2^{-j+1}.
	\end{array}
	\right.
	\end{equation}
	
	Lemma \ref{lem: non-vanishing} implies $\inf_{j,k} \int_{X} e^u \langle(\beta + dd^c \rho + dd^c v_{j,k})^n \rangle > 0$, thus by \eqref{matrix}, we have
	\begin{equation} \label{63}
	\int_{X} e^{v_{j,k}} \langle (\beta + dd^c \rho + dd^c v_{j,k})^n \rangle 
	\end{equation}
	does not converge to zero as $k \to +\infty$.
	
	If $\sup_{X} v_{j,k} \to -\infty$, then by \eqref{matrix},
	$$
	\int_{X} e^{v_{j,k}} \langle (\beta + dd^c \rho + dd^c v_{j,k})^n\rangle  \leq e^{\sup_{X} v_{j,k}} \int_{X} \beta^n \to 0
	$$
	as $k \to +\infty$, which contradicts \eqref{63}. Therefore,
	\begin{equation} \label{64}
	\sup_{X} v_{j,k}
	\end{equation}
	does not converge to $-\infty$ as $k \to +\infty$.
	
	It follows that $v_j := P_{\beta + dd^c \rho}(\inf_{k \geq j} u_k) \in PSH(X, \beta + dd^c \rho)$ since $(v_{j,k})_k$ is a decreasing sequence converging to $v_j$.
	
	By Lemma \ref{lem: main2}, $v_j \in \mathcal{E}(X, \beta + dd^c \rho)$. By construction, $(v_j)_j$ is non-decreasing. Set $v := \lim v_j = \sup v_j \in PSH(X, \beta + dd^c \rho)$. Corollary \ref{cor: convex} implies $v \in \mathcal{E}(X, \beta + dd^c \rho)$, and $v_j \to v$ in capacity by Hartogs' Lemma.
	
	Now, $|e^{v_j} - e^u|$ is uniformly bounded and quasi-continuous, and
	$$
	|e^{v_j} - e^u| - |e^v - e^u| \leq |e^{v_j} - e^v|.
	$$
	Since $e^{v_j} \to e^v$ in capacity, we have
	\begin{equation}
	|e^{v_j} - e^u| \to |e^v - e^u|
	\end{equation}
	in capacity.
	
	By Theorem \ref{thm: main3}, we have that
	\begin{equation} \label{68}
	\int_{X} |e^{v} - e^{u}| \langle(\beta + dd^c \rho + dd^c v)^n \rangle = \lim_{j \to +\infty} \int_{X} |e^{v_j} - e^{u}| \langle(\beta + dd^c \rho + dd^c v_j)^n \rangle.
	\end{equation}
	
	Similarly, by the monotone convergence theorem and \eqref{matrix},
	\begin{equation} \label{69}
	\int_{X} |e^{v_j} - e^{u}| \langle(\beta + dd^c \rho + dd^c v_j)^n\rangle = \lim_{k \to +\infty} \int_{X} |e^{v_{j,k}} - e^{u}| \langle(\beta + dd^c \rho + dd^c v_{j,k})^n \rangle = 0.
	\end{equation}
	
	Equations \eqref{68} and \eqref{69} imply that
	$$
	\int_{X} |e^{v} - e^{u}| \langle(\beta + dd^c \rho + dd^c v)^n \rangle = 0,
	$$
	thus Theorem \ref{thm: domi_principal} yields that $v \geq u$. From the definition of $v_j$, we have $v_j \leq u_j$, thus $v \leq u$. We conclude that $u = v$.
	
	Now, we have $v_j \leq u_j \leq \max(u_j, u)$, $v_j \to v = u$, and $\max(u_j, u) \to u$ in capacity. We infer that $u_j \to u$ in capacity.
\end{proof}

\begin{proof}[\textbf{End proof of Theorem \ref{thm: maintheorem}}]
	The proof follows from Theorem 3.3 in \cite{ALS24} using contradiction and applying Lemma \ref{lem: main4}.
\end{proof}

By Lemma \ref{lem: key1} and Theorem \ref{thm: key}, we have the following result.

\begin{thm}\label{thm: maintheorem2}
   Assume $\rho$ is a bounded $\beta$-psh function, and $u_j \in \mathcal{E}(X, \beta)$ such that $\langle(\beta + dd^c u_j)^n\rangle \leq \mu$ for some non-pluripolar Radon measure $\mu$. If $u_j \to u \in \operatorname{PSH}(X, \beta)$ in $L^1(X)$ and $\{u_j \}_j$ is uniformly bounded above, then $u \in \mathcal{E}(X, \beta)$ and $u_j \to u$ in capacity.
\end{thm}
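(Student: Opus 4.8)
The plan is to reduce the statement to Theorem \ref{thm: maintheorem} via the substitution $u \mapsto u - \rho$, which turns the problem for $\beta$ into the corresponding problem for $\beta + dd^c\rho$; the dictionary between the two pictures is exactly what is supplied by Lemma \ref{lem: key1} and Theorem \ref{thm: key}. Set $v_j := u_j - \rho$ and $v := u - \rho$. Since $\rho \in \PSH{X}{\beta}\cap L^\infty(X)$, the constant $0$ lies in $\PSH{X}{\beta + dd^c\rho}\cap L^\infty(X)$, so it may serve as the bounded reference potential when forming non-pluripolar products for $\beta + dd^c\rho$ in Definition \ref{defn: nn product}. With that choice one has, for every $j$ and $k$, the pointwise identities $\max(v_j,-k) = \max(u_j,\rho - k) - \rho$, $\{v_j > -k\} = \{u_j > \rho - k\}$, and $\beta + dd^c\rho + dd^c\max(v_j,-k) = \beta + dd^c\max(u_j,\rho-k)$; letting $k \to +\infty$ gives $\langle(\beta + dd^c\rho + dd^c v_j)^n\rangle = \langle(\beta + dd^c u_j)^n\rangle$, so in particular $\langle(\beta + dd^c\rho + dd^c v_j)^n\rangle \leq \mu$.

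Next I would verify the remaining hypotheses of Theorem \ref{thm: maintheorem} for $\{v_j\}$ and $v$. Since $u \in \PSH{X}{\beta}$, by definition $v = u - \rho \in \PSH{X}{\beta + dd^c\rho}$; since $u_j \in \mathcal{E}(X,\beta)$, Definition \ref{class2} gives $v_j = u_j - \rho \in \mathcal{E}(X, \beta + dd^c\rho)$. As $\rho$ is bounded, $\int_X |v_j - v|\,\omega^n = \int_X |u_j - u|\,\omega^n \to 0$, so $v_j \to v$ in $L^1(X)$; the assumed uniform upper bound on $\{u_j\}$ transfers to a uniform upper bound on $\{v_j\}$, and (through Lemma \ref{lem: key1}) guarantees that all the envelopes entering the proof of Theorem \ref{thm: maintheorem}, e.g. those appearing via Proposition \ref{prop: chac_class}, are genuine functions rather than $-\infty$. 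Theorem \ref{thm: maintheorem} then yields $v \in \mathcal{E}(X, \beta + dd^c\rho)$ and $v_j \to v$ in capacity.

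Finally I would translate the conclusion back. By Definition \ref{class2}, $v \in \mathcal{E}(X, \beta + dd^c\rho)$ means precisely $u = v + \rho \in \mathcal{E}(X, \beta)$, and since $\{|u_j - u| > \varepsilon\} = \{|v_j - v| > \varepsilon\}$ as Borel subsets of $X$ (the $\rho$ terms cancel pointwise), one obtains $\mathrm{Cap}_\omega(|u_j - u| > \varepsilon) = \mathrm{Cap}_\omega(|v_j - v| > \varepsilon) \to 0$ for every $\varepsilon > 0$, i.e. $u_j \to u$ in capacity. The argument is largely bookkeeping; the one step deserving genuine attention — and the place where allowing $\rho$ merely bounded rather than continuous or smooth actually matters — is the identification $\langle(\beta + dd^c\rho + dd^c v_j)^n\rangle = \langle(\beta + dd^c u_j)^n\rangle$ of the non-pluripolar products together with the correspondence $\mathcal{E}(X,\beta) \leftrightarrow \mathcal{E}(X,\beta + dd^c\rho)$, which is exactly what Theorem \ref{thm: key} and Definition \ref{class2} are there to provide.
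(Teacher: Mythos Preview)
Your proposal is correct and takes essentially the same approach as the paper: the paper's own proof is a single sentence, ``By Lemma \ref{lem: key1} and Theorem \ref{thm: key}, we have the following result,'' and what you have written is precisely the unpacking of that sentence via the substitution $v_j = u_j - \rho$ and an application of Theorem \ref{thm: maintheorem}. One small remark: your invocation of Lemma \ref{lem: key1} and Theorem \ref{thm: key} to ``guarantee that all the envelopes entering the proof of Theorem \ref{thm: maintheorem} \ldots\ are genuine functions'' is superfluous once you treat Theorem \ref{thm: maintheorem} as a black box---the relevant envelope finiteness is already established inside that proof (via Proposition \ref{prop: chac_class} and Corollary \ref{cor: A})---so the reduction really only needs Definition \ref{class2} and the identification of the non-pluripolar products, exactly as you carry it out.
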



To finish this section, we need to make some preparations to prove Corollary \ref{cor: main}, following the approach in \cite{KN22}.

\begin{thm}\label{thm: continuity 2}
    Let $\{u_j\}$ be a uniformly bounded sequence of $\beta$-psh functions. Assume $(\beta + dd^c u_j)^n \leq C(\beta + dd^c \varphi_j)^n$ for some uniformly bounded sequence $\{\varphi_j\} \subset \operatorname{PSH}(X, \beta)$ such that $\varphi_j \to \varphi \in \operatorname{PSH}(X, \beta)$ in capacity. If $u_j \to u \in \operatorname{PSH}(X, \beta) \cap L^{\infty}(X)$ in $L^1(X)$, then $u_j \to u$ in capacity.
\end{thm}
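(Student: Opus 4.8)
The plan is to transport the argument of Cegrell--Kolodziej \cite{CK06}, in the compact Hermitian form of Kolodziej--Nguyen \cite{KN22}, to the present setting, using the envelope and comparison results proved above (Theorem \ref{thm: put_no_mass4}, Corollary \ref{cor: comparison}, the comparison principle of Remark \ref{rem: second_version}~(4)) in place of their K\"ahler analogues, and using the bounded $\beta$-potential $\rho$ together with Lemma \ref{lem: pluripolar} to absorb the Hermitian errors. One cannot simply invoke Theorem \ref{thm: maintheorem2}: there is in general no single non-pluripolar finite measure dominating all the $(\beta+dd^c u_j)^n$, since the hypothesis controls them only by the moving measures $C(\beta+dd^c\varphi_j)^n$.

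\textbf{Step 1 (reduction and upper barrier).} Since convergence in capacity is sequential, it suffices to show that every subsequence of $(u_j)$ admits a further subsequence converging to $u$ in capacity; fixing a subsequence and extracting again, we may assume $u_j\to u$ a.e.\ and, after normalizing, $u_j\le 0$. Put $\tilde u_j:=(\sup_{k\ge j}u_k)^{*}$. By Lemma \ref{lem: neg}, $\tilde u_j=\sup_{k\ge j}u_k$ off a pluripolar set; combining this with $u_j\to u$ a.e.\ and with $\int_X\tilde u_j\,\omega^n\ge\int_X u_k\,\omega^n\to\int_X u\,\omega^n$ and $\tilde u_j\le u$ quasi-everywhere, one gets $\tilde u_j\downarrow u$. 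A decreasing sequence of uniformly bounded quasi-plurisubharmonic functions with quasi-plurisubharmonic limit converges in $\operatorname{Cap}_\omega$ (quasi-continuity together with Dini's theorem on the complement of a small-capacity open set), so $\tilde u_j\to u$ in capacity; as $u_j\le\tilde u_j$, this already yields $\operatorname{Cap}_\omega(\{u_j>u+\varepsilon\})\to 0$ for every $\varepsilon>0$.

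\textbf{Step 2 (uniform domination of Monge--Amp\`ere measures by capacity).} Since $\beta$ is smooth there is $C_0\ge 1$ with $\beta\le C_0\omega$; hence each uniformly bounded $\beta$-psh $\varphi_j$ is $C_0\omega$-psh, so $v_j:=(C_0M)^{-1}(\varphi_j-\inf_X\varphi_j)\in\PSH{X}{\omega}$ with $0\le v_j\le1$, where $M\ge1$ is a uniform bound for $\operatorname{osc}_X\varphi_j$. From $\omega+dd^c v_j\ge (C_0M)^{-1}(\beta+dd^c\varphi_j)\ge0$ one gets $(\beta+dd^c\varphi_j)^n\le(C_0M)^n(\omega+dd^c v_j)^n\le(C_0M)^n\operatorname{Cap}_\omega$ as measures, uniformly in $j$, and therefore $(\beta+dd^c u_j)^n\le C(C_0M)^n\operatorname{Cap}_\omega$ uniformly; the same computation applies to the $\tilde u_j$.

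\textbf{Step 3 (lower barrier).} It remains to prove $\operatorname{Cap}_\omega(\{u_j<u-\varepsilon\})\to0$; this is the crux. From $u\le\tilde u_j$ we have $\{u_j<u-\varepsilon\}\subseteq\{u_j<\tilde u_j-\varepsilon\}$, and since $u_j\le\tilde u_j$ with both bounded (hence in $\mathcal E(X,\beta)$), the comparison principle gives
\[
\int_{\{u_j<\tilde u_j-\varepsilon\}}(\beta+dd^c\tilde u_j)^n\le\int_{\{u_j<\tilde u_j-\varepsilon\}}(\beta+dd^c u_j)^n\le C\int_{\{u_j<\tilde u_j-\varepsilon\}}(\beta+dd^c\varphi_j)^n .
\]
Next one invokes a Kolodziej-type capacity estimate --- for bounded $\beta$-psh $f\le g$ and $0<s<t$, $s^n\operatorname{Cap}_\omega(\{f<g-t\})\le C_1\int_{\{f<g-t+s\}}(\beta+dd^c f)^n+(\text{error})$, the error being handled by the oscillation bound and the bounded potential $\rho$ --- applied with $f=u_j$, $g=\tilde u_j$, to reduce the problem to $\int_{\{u_j<\tilde u_j-\varepsilon/2\}}(\beta+dd^c\varphi_j)^n\to0$. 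Finally, split $\{u_j<\tilde u_j-\varepsilon/2\}\subseteq\{u_j<u-\varepsilon/4\}\cup\{\tilde u_j>u+\varepsilon/4\}$: the second set has $\operatorname{Cap}_\omega\to0$ by Step 1, hence $(\beta+dd^c\varphi_j)^n$-mass $\to0$ by Step 2; on the first set the hypothesis $\varphi_j\to\varphi$ in capacity enters as in \cite{CK06,KN22}, transferred through the increasing envelopes $\env{\inf_{k\ge m}\varphi_k}{\beta}\uparrow\varphi$ and using Lemma \ref{lem: pluripolar} to absorb small-capacity exceptional sets into a $\beta$-psh weight, to show this integral also tends to $0$. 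Together with Step 1 this gives $u_j\to u$ in capacity.

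The main obstacle is the end of Step 3: genuinely converting the capacity (not merely $L^1$) convergence of $(\varphi_j)$ into $\int_{\{u_j<\tilde u_j-\varepsilon/2\}}(\beta+dd^c\varphi_j)^n\to0$, and keeping the error term in the capacity estimate under control when $\rho$ is only bounded rather than smooth.
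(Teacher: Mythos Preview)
Your strategy differs substantially from the paper's, and Step~3 contains a genuine gap that you yourself flag. The paper's proof is a two-line reduction to two auxiliary results proved immediately afterwards. First, Lemma~\ref{lem: cap convergence lemma2} (an integration-by-parts argument modelled on \cite[Lemma~2.3]{KN22}) shows $\int_X|u_j-u|\,(\beta+dd^c\varphi_j)^n\to0$ using only the capacity convergence of $(\varphi_j)$; the domination hypothesis then yields $\int_X|u_j-u|\,(\beta+dd^c u_j)^n\to0$. Second, Theorem~\ref{thm: cap convergence lemma} converts this into convergence in capacity by constructing an \emph{envelope-based lower barrier} $v_j:=\env{\inf_{k\ge j}u_k}{\beta+dd^c\rho}$ and proving $v_j\uparrow u$ via the domination principle (Theorem~\ref{thm: domi_principal}); the squeeze $v_j\le u_j\le\max(u_j,u)$, with both ends converging monotonically to $u$ and hence in $\operatorname{Cap}_\omega$, finishes the argument. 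No Kolodziej-type sublevel-set capacity estimate is invoked at any point.

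Your Step~3 breaks down in two places. First, the inequality $s^n\,\operatorname{Cap}_\omega(\{f<g-t\})\le C_1\int_{\{f<g-t+s\}}(\beta+dd^cf)^n+(\text{error})$ is not available in this degenerate setting: running the standard convexity trick with a competitor $h\in\PSH{X}{\beta}$, $\rho\le h\le\rho+1$, together with the comparison principle of Remark~\ref{rem: second_version}(4), produces $\operatorname{Cap}_\beta$ on the left, not $\operatorname{Cap}_\omega$; since one only has $\operatorname{Cap}_\beta\le M^n\operatorname{Cap}_\omega$ and not the reverse, this cannot be upgraded. Second, your handling of $\int_{\{u_j<u-\varepsilon/4\}}(\beta+dd^c\varphi_j)^n$ is circular: feeding it through Step~2 bounds it by $(C_0M)^n\operatorname{Cap}_\omega(\{u_j<u-\varepsilon/4\})$, which is precisely the quantity you are trying to show vanishes (with a worse threshold), and the gesture toward ``increasing envelopes $\env{\inf_{k\ge m}\varphi_k}{\beta}\uparrow\varphi$'' supplies no mechanism linking $(\beta+dd^c\varphi_j)^n$ on the moving sublevel sets of $u_j$ to those envelopes. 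The paper sidesteps both issues by never estimating $\operatorname{Cap}_\omega$ of a sublevel set directly: the lower barrier is built from envelopes of the $u_j$ themselves, and its monotone convergence to $u$ is automatically convergence in $\operatorname{Cap}_\omega$.
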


\begin{proof}
    This is a direct consequence of Lemma \ref{lem: cap convergence lemma2} and Theorem \ref{thm: cap convergence lemma} below.
\end{proof}

Similarly, one can obtain the following theorem.

\begin{thm}\label{thm: continuity 3}
    Let $\{u_j\}$ be a uniformly bounded sequence of $\beta$-psh functions. Assume $(\beta + dd^c u_j)^n \leq C(\omega + dd^c \varphi_j)^n$ for some uniformly bounded sequence $\{\varphi_j\} \subset \operatorname{PSH}(X, \omega)$ such that $\varphi_j \to \varphi \in \operatorname{PSH}(X, \omega)$ in capacity. If $u_j \to u \in \operatorname{PSH}(X, \beta) \cap L^{\infty}(X)$ in $L^1(X)$, then $u_j \to u$ in capacity.
\end{thm}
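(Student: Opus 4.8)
The plan is to run the proof of Theorem \ref{thm: continuity 2} essentially verbatim, with the class $\operatorname{PSH}(X,\beta)$ in which the auxiliary functions $\varphi_j$ live replaced by $\operatorname{PSH}(X,\omega)$ and the dominating forms $\beta+dd^c\varphi_j$ replaced by $\omega+dd^c\varphi_j$. Recall that Theorem \ref{thm: continuity 2} is obtained by combining two ingredients: Lemma \ref{lem: cap convergence lemma2}, which shows that the family of Monge-Amp\`ere masses $\{(\beta+dd^c u_j)^n\}_j$ is uniformly absolutely continuous with respect to $\operatorname{Cap}_\omega$ (for every $\varepsilon>0$ there is $\delta>0$ with $\operatorname{Cap}_\omega(E)<\delta\Rightarrow\sup_j(\beta+dd^c u_j)^n(E)<\varepsilon$), and Theorem \ref{thm: cap convergence lemma}, the abstract criterion asserting that a uniformly bounded sequence of $\beta$-psh functions converging in $L^1(X)$ and having equi-absolutely-continuous Monge-Amp\`ere masses converges in capacity. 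The conclusion here is about the $\beta$-psh sequence $u_j\to u$ exactly as before, so the second ingredient is unchanged; only the proof of the first ingredient must be adapted.

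First I would prove the analog of Lemma \ref{lem: cap convergence lemma2}: the family $\{(\beta+dd^c u_j)^n\}_j$ is uniformly absolutely continuous with respect to $\operatorname{Cap}_\omega$. Since $(\beta+dd^c u_j)^n\le C(\omega+dd^c\varphi_j)^n$ by hypothesis, it suffices to show that $\{(\omega+dd^c\varphi_j)^n\}_j$ is uniformly absolutely continuous with respect to $\operatorname{Cap}_\omega$. This uses only that $\{\varphi_j\}\subset\operatorname{PSH}(X,\omega)$ is uniformly bounded and $\varphi_j\to\varphi$ in capacity, where (by the argument that $\varphi$ is the pointwise decreasing limit of Lebesgue averages bounded below) $\varphi$ is itself bounded $\omega$-psh. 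By the continuity of the complex Monge-Amp\`ere operator along uniformly bounded sequences converging in capacity on compact Hermitian manifolds (\cite[Corollary 2.2]{KN22}; cf.\ \cite{X09} in the K\"ahler case), $\psi\,(\omega+dd^c\varphi_j)^n\to\psi\,(\omega+dd^c\varphi)^n$ weakly for every bounded quasi-continuous $\psi$. Arguing by contradiction as in the proof of Lemma \ref{lem: cap convergence lemma2}: were the family not equi-absolutely continuous, one would find $\varepsilon_0>0$, a subsequence, and Borel sets $E_j$ with $\operatorname{Cap}_\omega(E_j)\to0$ but $(\omega+dd^c\varphi_j)^n(E_j)\ge\varepsilon_0$; passing to quasi-continuous representatives and using the uniform bound on $\varphi_j$ together with the Hermitian Bedford-Taylor convergence theorem one forces the limit measure $(\omega+dd^c\varphi)^n$ to charge a $\operatorname{Cap}_\omega$-negligible (hence pluripolar) set, contradicting the fact that the Monge-Amp\`ere mass of a bounded $\omega$-psh function puts no mass on pluripolar sets. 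Crucially, nothing in this step uses that the ambient form is closed, only uniform boundedness of the potentials and convergence in $\operatorname{Cap}_\omega$, so the passage from $\beta$ to $\omega$ is harmless.

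Having established that $\{(\beta+dd^c u_j)^n\}_j$ is uniformly absolutely continuous with respect to $\operatorname{Cap}_\omega$, and since by hypothesis $\{u_j\}$ is a uniformly bounded sequence of $\beta$-psh functions with $u_j\to u\in\operatorname{PSH}(X,\beta)\cap L^\infty(X)$ in $L^1(X)$, Theorem \ref{thm: cap convergence lemma} applies directly and gives $u_j\to u$ in capacity, which completes the proof.

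I expect the only genuinely delicate point to be the equi-absolute-continuity step, that is, excluding a ``capacity-escaping'' subsequence $(E_j)$ for the masses $(\omega+dd^c\varphi_j)^n$. Compared with the K\"ahler situation the subtlety is that $\omega$ need not be closed, so integration by parts is not freely available; but this is exactly what \cite[Corollary 2.2]{KN22} and the Hermitian Bedford-Taylor convergence theorem (used repeatedly above, e.g.\ in the proof of Theorem \ref{thm: maintheorem}) are designed to handle, and the argument transfers line by line from the proof of Lemma \ref{lem: cap convergence lemma2}.
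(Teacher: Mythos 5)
Your high-level plan matches the paper's: prove an $\omega$-analogue of Lemma~\ref{lem: cap convergence lemma2} and feed its output into Theorem~\ref{thm: cap convergence lemma}. The paper does exactly this, citing \cite[Lemma 2.3]{KN22} for the first step. But your reformulation of what these two results say introduces a genuine gap.

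You recast Lemma~\ref{lem: cap convergence lemma2} as asserting uniform absolute continuity of the masses $\{(\beta+dd^cu_j)^n\}_j$ with respect to $\operatorname{Cap}_\omega$, and Theorem~\ref{thm: cap convergence lemma} as a criterion taking equi-absolute-continuity as input. Neither is accurate. Lemma~\ref{lem: cap convergence lemma2} proves the specific limit $\lim_{j\to\infty}\int_X|u-u_j|(\beta+dd^cw_j)^n=0$, and Theorem~\ref{thm: cap convergence lemma} requires precisely the hypothesis $\lim_{j\to\infty}\int_X|u_j-u|(\beta+dd^c\rho+dd^cu_j)^n=0$. The distinction matters because passing from equi-absolute-continuity plus $L^1$-convergence to this limit is circular. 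Writing $|u_j-u|=(\max(u_j,u)-u)+(\max(u_j,u)-u_j)$, the first summand tends to $0$ in capacity by Hartogs' lemma, and equi-absolute-continuity handles it; but for the second summand you would need $(\beta+dd^cu_j)^n(\{u_j<u-\varepsilon\})\to0$, and equi-absolute-continuity converts this into $\operatorname{Cap}_\omega(\{u_j<u-\varepsilon\})\to0$ — which is the conclusion you are trying to prove. The actual proof of Lemma~\ref{lem: cap convergence lemma2} avoids this circle by freezing a measure $(\beta+dd^cw_k)^n$ (so that $\int|u_j-u|(\beta+dd^cw_k)^n\to0$ follows from Lemma~\ref{lem: convergence}, non-circularly) and then comparing $\int(\phi_j-u_j)(\beta+dd^cw_j)^n$ to $\int(\phi_j-u_j)(\beta+dd^cw_k)^n$ by integration by parts. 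That comparison step is the content you lose when you weaken the statement to equi-absolute-continuity.

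There is a second, subordinate, issue. You acknowledge that $\omega$ need not be closed but assert that the proof of Lemma~\ref{lem: cap convergence lemma2} ``transfers line by line''. It does not: that proof relies on $T=\sum_{s}(\beta+dd^cw_j)^s\wedge(\beta+dd^cw_k)^{n-1-s}$ being a \emph{closed} positive current in order to swap $dd^c$ onto $\phi_j-u_j$. When the background form is $\omega$ and $d\omega\neq0$, the analogous current is not closed, and additional error terms involving $d\omega$ and $d\omega\wedge d^c\omega$ appear and must be estimated. This is precisely what \cite[Lemma 2.3]{KN22} (not Corollary 2.2, which is a different statement used elsewhere in the paper) does, and that is why the paper cites it as a black box rather than re-running the argument of Lemma~\ref{lem: cap convergence lemma2}. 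To fix your proposal, replace the equi-absolute-continuity detour by the direct statement $\lim_j\int_X|u_j-u|(\omega+dd^c\varphi_j)^n=0$ from \cite[Lemma 2.3]{KN22}, observe that the domination hypothesis then yields $\lim_j\int_X|u_j-u|(\beta+dd^cu_j)^n\le C\lim_j\int_X|u_j-u|(\omega+dd^c\varphi_j)^n=0$, and apply Theorem~\ref{thm: cap convergence lemma}.
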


\begin{proof}
    This is a direct consequence of \cite[Lemma 2.3]{KN22} and Theorem \ref{thm: cap convergence lemma} below.
\end{proof}

\begin{defn}[\cite{LWZ24a}]
	
	For any Borel set $D \subset X$, the capacity of $D$ with respect to $\beta$ is defined as
	$$
	\mathrm{Cap}_{\beta}(D) := \sup \left\{ \int_{D} (\beta + dd^c h)^n : h \in PSH(X, \beta), \rho \leq h \leq \rho + 1 \right\}.
	$$
	
	The capacity of $D$ with respect to $\beta + dd^c \rho$ is defined as
	$$
	\mathrm{Cap}_{\beta + dd^c \rho}(D) := \sup \left\{ \int_{D} (\beta + dd^c \rho + dd^c h)^n : h \in PSH(X, \beta + dd^c \rho), 0 \leq h \leq 1 \right\},
	$$
	and $\mathrm{Cap}_{\beta}(D) = \mathrm{Cap}_{\beta + dd^c \rho}(D)$ by Remark 2.12 of \cite{LWZ24a}.
\end{defn}

In the sequel, let
\[
P_0 := \left\{v \in \operatorname{PSH}(X, \beta) \cap L^{\infty}(X) \mid \sup_X v = 0 \right\}
\]
be a relatively compact subset in $\operatorname{PSH}(X, \beta)$. Following the proof in \cite{KN22}, we have the following lemma:

\begin{lem}\label{lem: convergence}
    Let $\mu$ be a non-pluripolar Radon measure on $X$. Suppose $\{u_j\} \subset P_0$ converges almost everywhere with respect to the Lebesgue measure to $u \in P_0$. Then there exists a subsequence, still denoted by $\{u_j\}$, such that
    \[
    \lim_{j \to \infty} \int_X |u_j - u| \, d\mu = 0.
    \]
\end{lem}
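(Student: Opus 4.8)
\emph{Proof strategy.} The plan is to follow the scheme of \cite[Corollary 2.2]{KN22}: after a normalization, split $|u_j-u|=(u_j-u)^+ +(u-u_j)^+$ and treat the two halves by different devices — a monotone (decreasing envelope) argument for the first, and a truncation-plus-compactness argument for the second. First I would subtract a fixed constant so that $u_j\le 0$ and $u\le 0$; since $P_0$ is relatively compact in $\mathrm{PSH}(X,\beta)$ and $u_j\to u$ almost everywhere, every $L^1$-cluster value of $(u_j)_j$ equals $u$, so after passing to a subsequence I may assume $u_j\to u$ in $L^1(X)$ with $\|u_j-u\|_{L^1}\le 2^{-j}$.

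For the part $(u_j-u)^+$, set $\psi_j:=(\sup_{k\ge j}u_k)^*$. By Lemma~\ref{lem: neg} one has $\psi_j=\sup_{k\ge j}u_k$ off a pluripolar set, and the latter decreases pointwise to $\limsup_k u_k=u$ a.e.; since $(\psi_j)_j$ is a non-increasing sequence of $\beta$-psh functions and two $\beta$-psh functions agreeing a.e. agree everywhere, $\psi_j\downarrow u$ on all of $X$. Hence $u\le\psi_j\le 0$, so the $\psi_j$ are uniformly bounded, $(u_j-u)^+\le\psi_j-u$, and $0\le\psi_j-u\downarrow 0$ with $\psi_j-u\le -\inf_X u$; dominated convergence (recall $\mu$ is finite) then gives $\int_X(u_j-u)^+\,d\mu\to 0$.

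The part $(u-u_j)^+$ is the core of the argument. Fix $N$ so large that $u\ge -N$ on $X$ — possible because $u\in L^\infty(X)$ — and put $u_j^{(N)}:=\max(u_j,-N)$. These are $\beta$-psh, uniformly bounded in $[-N,0]$, hence lie in $\mathcal E(X,\beta)$, and $u_j^{(N)}\to\max(u,-N)=u$ in $L^1(X)$ since $|u_j^{(N)}-\max(u,-N)|\le|u_j-u|$. Applying the uniformly bounded case — either directly via \cite[Corollary 2.2]{KN22}, or by re-running the argument of Lemma~\ref{lem: main4} (which for bounded functions requires no Monge--Amp\`ere domination) — together with a diagonal extraction over $N\in\mathbb N$, I may assume $\int_X|u_j^{(N)}-u|\,d\mu\to 0$ as $j\to\infty$ for each fixed $N$. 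Since $(u-u_j)^+\le |u-u_j^{(N)}|+(u_j^{(N)}-u_j)$ and $u_j^{(N)}-u_j=(-N-u_j)\mathbf 1_{\{u_j<-N\}}$, this yields
\[
\limsup_{j\to\infty}\int_X(u-u_j)^+\,d\mu\ \le\ \limsup_{j\to\infty}\int_{\{u_j<-N\}}(-u_j)\,d\mu\qquad\text{for every }N.
\]

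The main obstacle is to show that the right-hand side above tends to $0$ as $N\to\infty$; this is the only step not reachable by pure monotonicity, and it is where the relative compactness of $P_0$ and the non-pluripolarity of $\mu$ must be used together. Relative compactness of $P_0$ provides a uniform sublevel-capacity decay, $\sup_j \mathrm{Cap}_\omega(\{u_j<-N\})\to 0$ as $N\to\infty$, while a finite non-pluripolar Radon measure is uniformly absolutely continuous with respect to $\mathrm{Cap}_\omega$; combining these with the standard integration-by-parts bounds for $\int(-u_j)$ against the capacity-defining measures on a relatively compact family controls $\int_{\{u_j<-N\}}(-u_j)\,d\mu$ by a quantity vanishing with $N$. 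Feeding this back into the two estimates gives $\int_X|u_j-u|\,d\mu\to 0$ along the extracted subsequence, which is the assertion. I expect this last uniform-integrability estimate for the deep sublevel sets to be the delicate point, since it is the only place where the quantitative interplay between $\mathrm{Cap}_\omega$ and $\mu$ — rather than just the qualitative vanishing of $\mu$ on pluripolar sets — is genuinely needed.
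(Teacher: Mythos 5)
Your treatment of $(u_j-u)^+$ via the decreasing envelope $\psi_j=(\sup_{k\ge j}u_k)^*$ is correct: since $u\in P_0$ is bounded, $\psi_j$ is squeezed between $u$ and $0$ and hence uniformly bounded; two $\beta$-psh functions that agree a.e.\ agree everywhere, so $\psi_j\downarrow u$ on all of $X$, and dominated convergence against the finite measure $\mu$ finishes this half. That part is fine and is, as far as one can tell, exactly the first step of \cite[Corollary 2.2]{KN22}, which the paper simply invokes.

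The second half contains a genuine gap, precisely where you flag the ``delicate point''. The assertion that a finite non-pluripolar Radon measure is \emph{uniformly} absolutely continuous with respect to $\mathrm{Cap}_\omega$ does not follow from non-pluripolarity, which only says $\mathrm{Cap}_\omega(E)=0\Rightarrow\mu(E)=0$. One can upgrade this to the purely qualitative statement $\mathrm{Cap}_\omega(E_j)\to 0\Rightarrow\mu(E_j)\to 0$ (countable subadditivity plus continuity from above), but there is no rate, and so the Chebyshev bound $\sup_j\mathrm{Cap}_\omega(\{u_j<-t\})\le C/t$ gives no control of $\int_{\{u_j<-N\}}(-u_j)\,d\mu=\int_N^{\infty}\mu(\{u_j<-t\})\,dt+N\,\mu(\{u_j<-N\})$. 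In fact the tail estimate you need is false in the stated generality: on $(\mathbb P^1,\omega_{FS})$ with $\rho\equiv 0$, take $g_p$ the normalized $\omega$-psh Green function with a log pole at $p$, choose a finite non-pluripolar $\mu$ with $g_p\notin L^1(\mu)$ (density $\sim |z|^{-2}(\log|z|)^{-2}$ near $p$), and set $u_j:=\varepsilon_j\max(g_p,-a_j)\in P_0$. With $\varepsilon_j\downarrow 0$ and $a_j\uparrow\infty$ chosen so that $\varepsilon_j\int\min(-g_p,a_j)\,d\mu\equiv 1$ and $\varepsilon_j a_j\to\infty$, one gets $u_j\to 0$ pointwise off $p$ and $\int|u_j-0|\,d\mu\equiv 1$, so no subsequence converges in $L^1(\mu)$. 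Thus uniform integrability of the deep tails cannot come for free from non-pluripolarity; you genuinely need a uniform $L^\infty$ bound on $\{u_j\}$.

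This is not only a defect of your attempt but of the lemma's statement, which the paper does not actually prove — it says ``following the proof in \cite{KN22}'', and \cite[Corollary 2.2]{KN22} is a uniformly bounded statement. In the only place Lemma~\ref{lem: convergence} is used, inside Lemma~\ref{lem: cap convergence lemma2}, the sequence $\{u_j\}$ inherits a uniform $L^\infty$ bound from Theorem~\ref{thm: continuity 2}, so the extra generality is never needed. Once you add ``\,$\{u_j\}$ uniformly bounded\,'' to the hypotheses, your Step 2 becomes vacuous (the set $\{u_j<-N\}$ is empty for $N$ large) and the argument reduces to your Step 1 together with the bounded case imported from \cite{KN22}; that is a correct proof. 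I would therefore not try to salvage the tail estimate but instead restate the lemma with the uniform bound.
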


\begin{lem}\label{lem: cap convergence lemma2}
    Let $\{u_j\}$ be as in Theorem \ref{thm: continuity 2}, and let $\{w_j\} \subset P_0$ be a uniformly bounded sequence that converges in capacity to $w \in P_0$. Then,
    \[
    \lim_{j \to \infty} \int_X |u - u_j| (\beta + dd^c w_j)^n = 0.
    \]
\end{lem}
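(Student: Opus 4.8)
The plan is to normalise so that $u_j,u\le 0$ on $X$; by uniform boundedness $-C_0\le u_j$ for all $j$, and since $\{w_j\}\subset P_0$ is uniformly bounded we may take $-M\le w_j,w\le 0$ with $\sup_X w_j=0$, and recall $\int_X(\beta+dd^c w_j)^n=\int_X\beta^n$ for every $j$. First I would split $|u-u_j|=(u_j-u)^++(u-u_j)^+$. The term $(u_j-u)^+$ is immediate from the Hartogs lemma for $\beta$-psh functions: since $\{u_j\}$ is uniformly bounded, $u_j\to u$ in $L^1(X)$ and $u\in\operatorname{PSH}(X,\beta)\cap L^\infty(X)$, for each $\varepsilon>0$ one has $u_j\le u+\varepsilon$ on all of $X$ once $j$ is large, hence $(u_j-u)^+\le\varepsilon$ pointwise and
$$\int_X(u_j-u)^+(\beta+dd^c w_j)^n\le\varepsilon\int_X\beta^n,$$
so this contribution tends to $0$.

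The substantive part is $\int_X(u-u_j)^+(\beta+dd^c w_j)^n\to 0$. Write $(u-u_j)^+=\max(u,u_j)-u_j=:g_j$; then $g_j\ge 0$ is uniformly bounded and quasi-continuous (a difference of bounded $\beta$-psh functions), $g_j\to 0$ in $L^1(X)$, and by the comparison for $\max$ of bounded $\beta$-psh functions (cf. Corollary \ref{pl_prop}) together with the hypothesis, $(\beta+dd^c\max(u,u_j))^n\le(\beta+dd^c u)^n+C(\beta+dd^c\varphi_j)^n$. I would pass to a subsequence along which $u_j\to u$ almost everywhere, so that also $\max(u,u_j)\to u$ a.e. and the relevant suprema converge. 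Setting $\nu:=(\beta+dd^c w)^n$, a fixed finite measure not charging pluripolar sets, Lemma \ref{lem: convergence} applied to $\{u_j\}$ and to $\{\max(u,u_j)\}$ (after harmless additive normalisation into $P_0$) gives $\int_X|u_j-u|\,\nu\to 0$ and $\int_X|\max(u,u_j)-u|\,\nu\to 0$ along this subsequence, hence $\int_X g_j\,\nu\to 0$. It remains to control the discrepancy
$$R_j:=\int_X g_j\big[(\beta+dd^c w_j)^n-(\beta+dd^c w)^n\big].$$

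For $R_j$ I would telescope $(\beta+dd^c w_j)^n-(\beta+dd^c w)^n=dd^c(w_j-w)\wedge T_j$, where $T_j:=\sum_{k=0}^{n-1}(\beta+dd^c w_j)^k\wedge(\beta+dd^c w)^{n-1-k}$ is a closed positive $(n-1,n-1)$-current whose mass is cohomological and hence uniformly bounded, and integrate by parts — legitimate since $\beta$ is closed and all potentials are bounded, so the Bedford--Taylor formalism applies with no Hermitian torsion terms — to get $R_j=\int_X(w_j-w)\,dd^c g_j\wedge T_j$ with $dd^c g_j=(\beta+dd^c\max(u,u_j))-(\beta+dd^c u_j)$. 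Given $\delta>0$, convergence in capacity of $\{w_j\}$ provides an open $G$ with $\mathrm{Cap}_\beta(G)<\delta$ on whose complement $|w_j-w|<\delta$ for $j$ large; splitting over $G$ and $X\setminus G$, the part over $X\setminus G$ is at most $\delta$ times a uniformly bounded mass, while the part over $G$ is at most $2M$ times the masses of $(\beta+dd^c\max(u,u_j))\wedge T_j$ and $(\beta+dd^c u_j)\wedge T_j$ on $G$. Here the Monge--Amp\`ere domination enters: by Chern--Levine--Nirenberg type estimates for mixed products together with $(\beta+dd^c u_j)^n\le C(\beta+dd^c\varphi_j)^n$ and the uniform absolute continuity with respect to $\mathrm{Cap}_\beta$ of the family $\{(\beta+dd^c\varphi_j)^n\}$ — a consequence of $\varphi_j\to\varphi$ in capacity with uniformly bounded potentials — one makes these masses over $G$ uniformly small in $j$ as $\mathrm{Cap}_\beta(G)\to 0$. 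Letting $\delta\to 0$ yields $R_j\to 0$; combined with the previous paragraph this gives $\int_X(u-u_j)^+(\beta+dd^c w_j)^n\to 0$ along the subsequence, and since the limit value $0$ does not depend on the subsequence, a standard subsequence-of-subsequences argument upgrades it to the full sequence, completing the proof.

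The hard part will be this last step. The smallness of $g_j$ is available only against Lebesgue measure and against the fixed limit measure $(\beta+dd^c w)^n$, and it must be transferred to the varying singular measures $(\beta+dd^c w_j)^n$; the only leverage is the capacity convergence of $\{w_j\}$ combined with the Monge--Amp\`ere domination of $(\beta+dd^c u_j)^n$. Controlling the mixed Monge--Amp\`ere masses $(\beta+dd^c\max(u,u_j))\wedge T_j$ and $(\beta+dd^c u_j)\wedge T_j$ on sets of small capacity, uniformly in $j$, is exactly the point at which the Kolodziej--Nguyen method is needed, and it is the estimate that will require the most careful bookkeeping.
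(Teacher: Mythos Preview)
Your treatment of the first piece $(u_j-u)^+$ contains a genuine error. Hartogs' lemma does \emph{not} give the uniform pointwise bound $u_j\le u+\varepsilon$ on all of $X$ for large $j$: that conclusion requires comparison with a \emph{continuous} majorant, and here $u$ is only bounded and upper semicontinuous. What Hartogs yields is $\limsup_j u_j(x)\le u(x)$ for every $x$, which is not uniform in $x$; for a decreasing sequence $u_j\downarrow u$ with $u$ discontinuous this is exactly the failure of Dini's theorem. The paper handles this term differently: it sets $\phi_j:=\max(u_j,u)$ and $v_j:=(\sup_{k\ge j}\phi_k)^*$, observes that $v_j\downarrow u$ (monotone convergence of $\beta$-psh functions, hence convergence in capacity), and then uses the capacity estimate
\[
\int_X(\phi_j-u)(\beta+dd^c w_j)^n\le C\,\mathrm{Cap}_\beta(\{\phi_j>u+\varepsilon\})+C\varepsilon\le C\,\mathrm{Cap}_\beta(\{v_j>u+\varepsilon\})+C\varepsilon\to 0,
\]
the point being that $(\beta+dd^c w_j)^n$ is uniformly dominated by $\mathrm{Cap}_\beta$ since the $w_j$ are uniformly bounded. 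Your argument for this term needs to be replaced by something of this type.

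For the second piece $(u-u_j)^+$ your scheme is essentially the paper's (the paper compares with $(\beta+dd^c w_k)^n$ for a fixed large $k$ rather than with $(\beta+dd^c w)^n$, which is immaterial), but your justification of the ``hard part'' is misdirected. After integrating by parts you need to control the masses $(\beta+dd^c\max(u,u_j))\wedge T_j$ and $(\beta+dd^c u_j)\wedge T_j$ on a set $G$ of small $\mathrm{Cap}_\beta$. This has nothing to do with the domination $(\beta+dd^c u_j)^n\le C(\beta+dd^c\varphi_j)^n$ or with any absolute continuity of $\{(\beta+dd^c\varphi_j)^n\}$: the current $T_j$ is built from $w_j,w$, not from $u_j$ or $\varphi_j$, and the hypothesis on $(\beta+dd^c u_j)^n$ says nothing about mixed products with $T_j$. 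The correct and much simpler reason is that every factor in these mixed products is $\beta+dd^c h$ with $h$ ranging in a \emph{uniformly bounded} family of $\beta$-psh functions, and any such mixed Monge--Amp\`ere measure is dominated by a fixed multiple of $\mathrm{Cap}_\beta$ (take $g:=\rho+\tfrac{1}{nA}\sum_i(h_i-\rho)$ with $\rho\le h_i\le\rho+A$, so $\rho\le g\le\rho+1$ and expand $(\beta+dd^c g)^n$). The domination hypothesis is not used in this lemma at all; it enters only later, when one applies the lemma with $w_j=\varphi_j$ to obtain $\int_X|u-u_j|(\beta+dd^c\varphi_j)^n\to 0$ and then invokes $(\beta+dd^c u_j)^n\le C(\beta+dd^c\varphi_j)^n$ to feed Theorem~\ref{thm: cap convergence lemma}.
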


\begin{proof}
    Observe that
    \[
    |u_j - u| = (\max\{u_j, u\} - u_j) + (\max\{u_j, u\} - u).
    \]
    Let $\phi_j := \max\{u_j, u\}$ and $v_j := \left(\sup_{k \geq j} \phi_k\right)^*$. We have $\phi_j \geq u$ and $v_j$ decreases to $u$ pointwise. By Hartogs' lemma, for any $\delta > 0$,
    \[
    \operatorname{Cap}_{\beta}\left(\{|\phi_j - u| > \delta\}\right) = \operatorname{Cap}_{\beta}\left(\{\phi_j > u + \delta\}\right) \leq \operatorname{Cap}_{\beta}\left(\{v_j > u + \delta\}\right) \to 0,
    \]
    where the last step follows since monotone convergence implies convergence in capacity.

    Next, fix $\epsilon > 0$. For large $j$, we have:
    \begin{align*}
        \int_X (\phi_j - u)(\beta + dd^c w_j)^n
        & \leq C \int_{\{|\phi_j - u| > \epsilon\}} (\beta + dd^c w_j)^n + \epsilon \int_X (\beta + dd^c w_j)^n \\
        & \leq C \operatorname{Cap}_{\beta}(|\phi_j - u| > \epsilon) + C \epsilon,
    \end{align*}
    where the first inequality follows since both $u_j$ and $\phi_j$ are uniformly bounded, and $C$ is a constant controlling them. Therefore,
    \[
    \lim_{j \to \infty} \int_X (\phi_j - u)(\beta + dd^c w_j)^n = 0.
    \]

    We next turn to the estimate of the second term. For $j > k$,
    \begin{align*}
        & \int_X (\phi_j - u_j)(\beta + dd^c w_j)^n - \int_X (\phi_j - u_j)(\beta + dd^c w_k)^n \\
        &= \int_X (\phi_j - u_j) dd^c (w_j - w_k) \wedge T \\
        &= \int_X (w_j - w_k) dd^c (\phi_j - u_j) \wedge T \\
        &\leq \int_X |w_j - w_k| \left[(\beta + dd^c \phi_j) + (\beta + dd^c u_j)\right] \wedge T \\
        &\leq C \operatorname{Cap}_{\beta}(|w_j - w_k| > \epsilon) + C \epsilon,
    \end{align*}
    where $T = \sum_{s=1}^{n-1} (\beta + dd^c w_j)^s \wedge (\beta + dd^c w_k)^{n-1-s}$ is a closed positive current. The last inequality holds because the integral $\int_X \left[(\beta + dd^c \phi_j) + (\beta + dd^c u_j)\right] \wedge T$ is dominated by $A^n \operatorname{Cap}_{\beta}(X) =A^n \int_X \beta^n$, where $A$ is the uniform bound of $\phi_j$ and $u_j$.

    Since $w_j \to w$ in capacity, the right-hand side is less than $C \epsilon$ for some uniform constant $C$, provided that $j > k > k_0$ for some large $k_0$.

    Finally, we can estimate:
    \begin{align*}
        \int_X (\phi_j - u_j)(\beta + dd^c w_j)^n &\leq \int_X (\phi_j - u_j)(\beta + dd^c w_k)^n \\
        & \quad + \left|\int_X (\phi_j - u_j)(\beta + dd^c w_j)^n - \int_X (\phi_j - u_j)(\beta + dd^c w_k)^n\right| \\
        &\leq \int_X |u - u_j| (\beta + dd^c w_j)^n + C \epsilon.
    \end{align*}
    Fixing $k = k_0$ and applying Lemma~\ref{lem: convergence}, we obtain the result by letting $j \to \infty$.
\end{proof}

\begin{thm}\label{thm: cap convergence lemma}
    Let \( u_j \in \mathrm{PSH}(X, \beta + dd^c \rho) \) be a uniformly bounded sequence converging in \( L^1(X) \) to a \( (\beta + dd^c \rho) \)-psh function \( u \). If
    \[
    \lim_{j \to +\infty} \int_X |u_j - u| \, (\beta + dd^c \rho + dd^c u_j)^n = 0,
    \]
    then \( u_j \to u \) in capacity.
\end{thm}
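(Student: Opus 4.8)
The plan is to trap $(u_j)$ between a non-decreasing family of $(\beta+dd^c\rho)$-psh functions converging in capacity to $u$ and the non-increasing family $\max(u_j,u)$, which also converges in capacity to $u$. Write $\theta:=\beta+dd^c\rho$, so that $\theta\ge 0$ as a current and every bounded $\theta$-psh function lies in $\mathcal{E}(X,\theta)$, since $\beta$ is closed. Adding a constant, we may assume $-M\le u_j\le 0$ for all $j$; passing to a subsequence which in addition converges to $u$ almost everywhere, we get $-M\le u\le 0$, so $u\in\mathcal{E}(X,\theta)$. It is enough to prove convergence in capacity along a further subsequence: since the hypothesis $a_j:=\int_X|u_j-u|\,(\theta+dd^c u_j)^n\to 0$ passes to every subsequence, the standard subsequence argument then yields convergence in capacity for the full sequence. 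Hence we may also assume $a_j\le 2^{-j}$.

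For the non-increasing side, let $\psi_j:=(\sup_{k\ge j}u_k)^*$. By Hartogs' lemma $\psi_j\downarrow u$, so $u\le\max(u_j,u)\le\max(\psi_j,u)\downarrow u$; since a decreasing sequence of uniformly bounded quasi-psh functions converges in capacity to its limit, $\max(u_j,u)\to u$ in capacity.

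For the non-decreasing side, set $v_{j,k}:=P_\theta\big(\min(u_j,\dots,u_k)\big)$ for $k\ge j$. As $\min(u_j,\dots,u_k)$ is bounded below by $-M$, each $v_{j,k}$ belongs to $\mathrm{PSH}(X,\theta)\cap L^\infty(X)\subset\mathcal{E}(X,\theta)$, and $v_{j,j}=u_j$. The identity $P_\theta(\min(a,b))=P_\theta(\min(P_\theta(a),b))$ (immediate from the definition of the envelope) gives $v_{j,k}=P_\theta(\min(v_{j,k-1},u_k))$, and applying Lemma \ref{lem: ineq_3} with $h=u$ together with the elementary bound $|e^s-e^t|\le|s-t|$ for $s,t\le 0$ yields
\[
\int_X|e^{v_{j,k}}-e^{u}|\,(\theta+dd^c v_{j,k})^n\le\int_X|e^{v_{j,k-1}}-e^{u}|\,(\theta+dd^c v_{j,k-1})^n+a_k,
\]
hence by induction $\int_X|e^{v_{j,k}}-e^{u}|\,(\theta+dd^c v_{j,k})^n\le\sum_{l=j}^{k}a_l\le 2^{-j+1}$. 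As $\min(u_j,\dots,u_k)\downarrow\inf_{k\ge j}u_k$, Theorem \ref{thm: key} gives $v_{j,k}\downarrow v_j:=P_\theta(\inf_{k\ge j}u_k)\in\mathrm{PSH}(X,\theta)\cap L^\infty(X)$, and by the Bedford--Taylor convergence theorem and Theorem \ref{thm: main3} (continuity of the non-pluripolar product along sequences converging, here monotonically, in capacity) we get $\int_X|e^{v_j}-e^{u}|\,(\theta+dd^c v_j)^n\le 2^{-j+1}$. The sequence $(v_j)$ is non-decreasing; put $v:=(\sup_j v_j)^*\in\mathrm{PSH}(X,\theta)\cap L^\infty(X)$, so $v_j\to v$ in capacity and Theorem \ref{thm: main3} again gives $\int_X|e^{v}-e^{u}|\,(\theta+dd^c v)^n=0$.

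It remains to identify $v$ with $u$. Since $v_j\le\inf_{k\ge j}u_k\uparrow\liminf_k u_k=u$ a.e.\ along the subsequence, $v\le u$. The vanishing integral forces $(\theta+dd^c v)^n$ to be carried by $\{v=u\}$, so exactly as in the proof of Lemma \ref{lem: main4} the Domination Principle (Theorem \ref{thm: domi_principal}) yields $v\ge u$; thus $v=u$. Finally $v_j\le u_j\le\max(u_j,u)$ with both ends converging in capacity to $u$ forces $u_j\to u$ in capacity along the subsequence, and the subsequence argument completes the proof. The main work is the construction and uniform control of the iterated envelopes $v_{j,k}$---in particular the telescoping estimate obtained from Lemma \ref{lem: ineq_3} (itself a consequence of Corollary \ref{cor: contactieq}) and the verification that all these functions remain in $\mathcal{E}(X,\theta)$---together with the two appeals to Theorem \ref{thm: main3}; the reduction to a subsequence with summable $a_j$ is precisely what makes the telescoping bound uniform in $k$.
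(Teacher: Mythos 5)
Your proof is correct and takes essentially the same route as the paper's: you construct the same envelopes $v_{j,k}=P_{\beta+dd^c\rho}(\min(u_j,\dots,u_k))$ and $v_j=P_{\beta+dd^c\rho}(\inf_{k\ge j}u_k)$, derive the same telescoping estimate from Lemma~\ref{lem: ineq_3} (via Corollary~\ref{cor: contactieq}), identify $v:=\lim v_j$ with $u$ through the domination principle, and then trap $u_j$ between $v_j$ and $\max(u_j,u)$. The only differences are cosmetic bookkeeping: you pass to the limit in $k$ via Theorem~\ref{thm: main3} where the paper invokes a convergence lemma from DDL23 together with Bedford--Taylor, and you spell out the upper-bound side ($\max(u_j,u)\to u$ in capacity via $\psi_j=(\sup_{k\ge j}u_k)^*$) more explicitly than the paper does.
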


\begin{proof}
    Up to extracting a subsequence, we can assume that
    \[
    \int_X |u_j - u| \, (\beta + dd^c \rho + dd^c u_j)^n \leq 2^{-j}, \quad \forall j.
    \]
    Fix \( j \in \mathbb{N} \) and consider
    \[
    v_{j,k} := P_{\beta + dd^c \rho} \left(\min(u_j, \ldots, u_k)\right), \quad \text{for every } k \geq j.
    \]
    The sequence \( (v_{j,k})_k \) decreases to the function
    \[
    v_j := P_{\beta + dd^c \rho} \left(\inf_{k \geq j} u_k \right).
    \]
    By Lemma \ref{lem: ineq_3}, we have
    \[
    \int_X |v_{j,k} - u| \, (\beta + dd^c \rho + dd^c v_{j,k})^n \leq \sum_{l=j}^k \int_X |u_l - u| \, (\beta + dd^c \rho + dd^c u_l)^n.
    \]

    Since \( v_{j,k} \in \mathrm{PSH}(X, \beta + dd^c \rho) \), \( \{v_{j,k}\}_k \) is uniformly bounded, and \( v_j \) is bounded, with \( v_{j,k} \downarrow v_j \). Thus, by the Bedford-Taylor convergence theorem,
    \[
    (\beta + dd^c \rho + dd^c v_{j,k})^n \to (\beta + dd^c \rho + dd^c v_j)^n \quad \text{as } k \to +\infty.
    \]

    We now apply  \cite[Lemma 2.5]{DDL23} to show that
    \begin{equation}\label{convergence_1}
        \lim_{k \to +\infty} v_{j,k} \, (\beta + dd^c \rho + dd^c v_{j,k})^n = v_j \, (\beta + dd^c \rho + dd^c v_j)^n.
    \end{equation}

    Since \( \{u_j\}_j \) is uniformly bounded, so is \( \{v_{j,k}\}_k \). There exists \( c \geq 1 \) such that
    \[
    -c \leq v_{j,k} \leq c \quad \Rightarrow \quad 0 \leq v_{j,k} + c \leq 2c \quad \Rightarrow \quad 0 \leq \frac{1}{2c} v_{j,k} + \frac{1}{2} \leq 1.
    \]
    Therefore, there exist constants \( C, C' \leq 1 \) such that for any Borel set \( E \),
    \begin{align*}
        C \cdot (\beta + dd^c \rho + dd^c v_{j,k})^n(E)
        & \leq \left[\beta + dd^c \rho + dd^c (C v_{j,k} + C')\right]^n(E) \\
        & \leq \mathrm{Cap}_{\beta + dd^c \rho}(E) \\
        & = \mathrm{Cap}_{\beta}(E) \\
        & \leq M^n \, \mathrm{Cap}_{\omega}(E),
    \end{align*}
    where the last inequality follows from the boundedness of \( \rho \) (see \cite{LWZ24a}).

    Using a similar argument for \( v_j \), we satisfy the conditions of \cite[Lemma 2.5]{DDL23}. Since \( v_{j,k} \downarrow v_j \) and by Hartogs' lemma, \( v_{j,k} \to v_j \) in capacity, \cite[Lemma 2.5]{DDL23} implies
    \[
    \lim_{k \to +\infty} |v_{j,k} - u| \, (\beta + dd^c \rho + dd^c v_{j,k})^n = |v_j - u| \, (\beta + dd^c \rho + dd^c v_j)^n.
    \]
    Therefore,
    \begin{equation}\label{star}
        \int_X |v_j - u| \, (\beta + dd^c \rho + dd^c v_j)^n
        = \lim_{k \to +\infty} \int_X |v_{j,k} - u| \, (\beta + dd^c \rho + dd^c v_{j,k})^n
        \leq 2^{-j+1}.
    \end{equation}

    The sequence \( (v_j)_j \) is non-decreasing. By Choquet's lemma, there exists \( v \in \mathrm{PSH}(X, \beta + dd^c \rho) \) such that \( v_j \to v \) almost everywhere (and in capacity by Hartogs' lemma). Since \( (v_j)_j \) is uniformly bounded, \( v \) is bounded, thus \( v \in \mathcal{E}(X, \beta + dd^c \rho) \). By the Bedford-Taylor convergence theorem, \cite[Lemma 2.5]{DDL23}  and using \eqref{star},
    \[
    \int_X |v - u| \, (\beta + dd^c \rho + dd^c v)^n = 0.
    \]

    Since \( u_j \) converges to \( u \) in \( L^1(X) \) and is uniformly bounded, \( u \) is bounded and belongs to \( \mathcal{E}(X, \beta + dd^c \rho) \). We infer that \( v \geq u \) by Theorem \ref{thm: domi_principal}, and since \( v_j \leq u_j \) for all \( j \), it follows that \( u = v \).

    Since \( v_j \uparrow v \), \( v_j \leq u_j \), and \( v_j \to v \) in capacity, we conclude that \( u_j \to u \) in capacity.

    This completes the proof.
\end{proof}

\section{Solving Monge-Amp\`ere equations}
	Let $(X, \omega)$ be a Hermitian manifold and let $\beta$ be a closed real $(1, 1)$ form with $\int_X\beta^n>0$ and   there exists a bounded $\beta$-psh function $\rho$. In this section, we provide the proof of Corollary \ref{cor: main}. For the reader's convenience, we state it as follows.


\begin{thm}[=Corollary \ref{cor: main}]\label{thm: solve_equation_1}
	Fix \(\lambda > 0\). Let \(\mu\) be a positive Radon measure vanishing on pluripolar sets, and assume \(\mu\) is absolutely continuous with respect to  $\omega^n$, i.e., can be written as \(\mu = f \omega^n\), where \(f \in L^{1}(\omega^n)\). Then there exists a unique \(u \in \mathcal{E}(X, \beta)\) such that
	\[
	\langle(\beta + dd^c u)^n \rangle = e^{\lambda u} \mu.
	\]
\end{thm}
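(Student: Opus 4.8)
The plan is to separate uniqueness from existence, to obtain existence by a monotone truncation of the density, and to pass to the limit using the convergence result of Theorem~\ref{thm: maintheorem}. For uniqueness, suppose $u,u'\in\mathcal E(X,\beta)$ both solve the equation. Writing $\varphi=u-\rho$ and $\varphi'=u'-\rho$, both lie in $\mathcal E(X,\beta+dd^c\rho)$ by Definition~\ref{class2}, and the equation rewrites as
\[
e^{-\lambda\varphi}\langle(\beta+dd^c\rho+dd^c\varphi)^n\rangle=e^{\lambda\rho}\mu=e^{-\lambda\varphi'}\langle(\beta+dd^c\rho+dd^c\varphi')^n\rangle .
\]
Applying Corollary~\ref{cor: comparison1} in each direction yields $\varphi\ge\varphi'$ and $\varphi'\ge\varphi$, hence $u=u'$.

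For existence, I would first truncate: set $f_k:=\min(f,k)$, a bounded density vanishing on pluripolar sets, hence in $L^p(\omega^n)$ for every $p>1$. By the solvability of the bounded-density equation established in \cite{LWZ24a} and recalled in the introduction, there is a bounded $u_k\in\mathrm{PSH}(X,\beta)$ with $\langle(\beta+dd^c u_k)^n\rangle=e^{\lambda u_k}f_k\,\omega^n$; being bounded, $u_k\in\mathcal E(X,\beta)$. Since $f_k\uparrow f$, rewriting in the $\beta+dd^c\rho$ form shows that $e^{-\lambda(u_k-\rho)}\langle(\beta+dd^c\rho+dd^c(u_k-\rho))^n\rangle=e^{\lambda\rho}f_k\,\omega^n$ is non-decreasing in $k$, so Corollary~\ref{cor: comparison1} gives $u_k\ge u_{k+1}$: the sequence is decreasing. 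Next I would record the a priori bound: from $\int_X\langle(\beta+dd^c u_k)^n\rangle=\int_X\beta^n$ and $f_k\le f$,
\[
\int_X\beta^n=\int_X e^{\lambda u_k}f_k\,\omega^n\le e^{\lambda\sup_X u_k}\,\mu(X),
\]
and since $\mu(X)=\int_X f\,\omega^n<\infty$ this forces $\sup_X u_k\ge \lambda^{-1}\log\bigl(\int_X\beta^n/\mu(X)\bigr)>-\infty$. A decreasing sequence of $\beta$-psh functions with uniformly bounded supremum cannot diverge uniformly to $-\infty$, so $u_k\downarrow u$ for some $u\in\mathrm{PSH}(X,\beta)$, and $u_k\to u$ in $L^1(X)$.

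To pass to the limit, note $u_k\le u_1\le M:=\sup_X u_1$ and $f_k\le f$, hence $\langle(\beta+dd^c u_k)^n\rangle=e^{\lambda u_k}f_k\,\omega^n\le e^{\lambda M}\mu=:\mu'$, a finite non-pluripolar Radon measure (here the hypothesis that $\mu$ charges no pluripolar set is used). Applying Theorem~\ref{thm: maintheorem} to $\psi_k:=u_k-\rho\in\mathcal E(X,\beta+dd^c\rho)$, which converge to $u-\rho$ in $L^1(X)$, we obtain $u\in\mathcal E(X,\beta)$ together with $u_k\to u$ in capacity and $\langle(\beta+dd^c u_k)^n\rangle\to\langle(\beta+dd^c u)^n\rangle$ weakly. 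On the other hand $e^{\lambda u_k}f_k\to e^{\lambda u}f$ pointwise a.e.\ and $0\le e^{\lambda u_k}f_k\le e^{\lambda M}f\in L^1(\omega^n)$, so by dominated convergence $e^{\lambda u_k}f_k\,\omega^n\to e^{\lambda u}\mu$ weakly. Equating the two limits gives $\langle(\beta+dd^c u)^n\rangle=e^{\lambda u}\mu$, completing the construction.

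Given Theorem~\ref{thm: maintheorem}, the argument is mostly soft bookkeeping; the points requiring genuine care are exactly the places where $f\in L^1(\omega^n)$ (equivalently $\mu$ finite and non-pluripolar) enters: the a priori lower bound on $\sup_X u_k$ that keeps the monotone limit from being identically $-\infty$; the construction of the dominating non-pluripolar measure $\mu'$ needed to invoke Theorem~\ref{thm: maintheorem}; and the dominated-convergence step for the right-hand side, which replaces a naive monotone-convergence argument since $e^{\lambda u_k}f_k$ is not monotone even though $f_k$ increases while $u_k$ decreases. The main obstacle I anticipate is verifying that the approximating bounded solutions $u_k$ exist and fit the comparison principle precisely enough to force monotonicity; once that is in place, the limiting procedure is routine.
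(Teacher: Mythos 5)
Your proof is correct and follows essentially the same route as the paper: truncation $f_k=\min(f,k)$, the bounded solution from \cite{LWZ24a}, monotonicity via Corollary~\ref{cor: comparison1}, ruling out $\sup_X u_k\to-\infty$ using $\int_X\beta^n>0$, domination of the Monge--Amp\`ere measures by $e^{\lambda u_1}\mu$ to invoke the convergence theorem, and dominated convergence for the right-hand side. The only cosmetic difference is that you invoke Theorem~\ref{thm: maintheorem} directly (which packages membership in $\mathcal E$, convergence in capacity, and weak convergence of the non-pluripolar measures), whereas the paper invokes Theorem~\ref{thm: maintheorem2} and then Theorem~\ref{thm: main3} separately; the content is the same.
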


\begin{proof}
	The proof follows the approach of Theorem 4.2 in \cite{ALS24}, using Theorem 1.4 from \cite{LWZ24a}.
	
	\textbf{Step 1.} Assume first that \(f \in L^{\infty}(X)\). By \cite{LWZ24a}, Theorem 1.4, there exists \(u \in \operatorname{PSH}(X, \beta) \cap L^{\infty}(X)\) such that
	\[
	(\beta + dd^c u)^n = e^{\lambda u} f \omega^n.
	\]
	
	\textbf{Step 2.} For each \(j \in \mathbb{N}^*\), by Step 1, there exists \(u_j \in \operatorname{PSH}(X, \beta) \cap L^{\infty}(X)\) such that
	\[
	(\beta + dd^c u_j)^n = e^{\lambda u_j} \min(f, j) \omega^n.
	\]
	The sequence \((u_j)\) is decreasing by Corollary \ref{cor: comparison1}. Set \(u = \lim u_j\). We claim \(u \in \mathcal{E}(X, \beta)\): if \(\sup_{X} u_j \to -\infty\), then
	\[
	\int_{X} \beta^n = \int_{X} (\beta + dd^c u_j)^n \leq \int_{X} e^{\lambda \sup_{X} u_j} d\mu \to 0,
	\]
	which contradicts our assumptions. Hence, \(u \in \operatorname{PSH}(X, \beta)\), and \(u_j \to u\) in \(L^1(X)\) by the compactness properties of quasi-psh functions.
	
	Since \((\beta + dd^c u_j)^n \leq e^{\lambda u_1} d\mu\) for \(j \geq 1\), Theorem \ref{thm: maintheorem2} implies \(u \in \mathcal{E}(X, \beta)\). The convergence \((\beta + dd^c u_j)^n \to \langle(\beta + dd^c u)^n\rangle \) weakly follows from Theorem \ref{thm: main3}. Applying the Dominated Convergence Theorem, we conclude
	\[
	(\beta + dd^c u_j)^n = e^{\lambda u_j} \min(f, j) \omega^n \to e^{\lambda u} f \omega^n.
	\]
	Thus, \( \langle(\beta + dd^c u)^n\rangle = e^{\lambda u} d\mu\). Uniqueness follows from Corollary \ref{cor: comparison1}.
\end{proof}

\begin{rem}
	If we define \(\mathcal{E}^1(X, \beta)\) as
	\[
	\mathcal{E}^1(X, \beta) := \left\{u \in \mathcal{E}(X, \beta) \mid \int_X u \langle(\beta + dd^c u)^n \rangle > -\infty \right\},
	\]
	then the solution \(u\) obtained in the above theorem actually lies in \(\mathcal{E}^1(X, \beta)\). This follows from the fact that \(u e^{\lambda u}\) is bounded and both sides of the equation can be multiplied by \(u\) and integrated.
\end{rem}

\begin{rem}
	The Monge-Amp\`ere energy of \(u \in \operatorname{PSH}(X, \beta)\) is defined as
	\[
	I_{\rho}(u) := \frac{1}{n+1} \sum_{k=0}^n \int_X u \langle(\beta + dd^c u)^k\rangle \wedge (\beta + dd^c \rho)^{n-k}.
	\]
	The class \(\mathcal{E}^1(X, \beta)\) consists precisely of functions in \(\operatorname{PSH}(X, \beta)\) with finite Monge-Amp\`ere energy. Since we do not use this fact in the paper, we omit the proof.
\end{rem}

\begin{cor}
	Let \(\mu\) be a positive Radon measure vanishingand assume \(\mu\) is absolutely continuous with respect to  $\omega^n$, i.e., can be written as \(\mu = f \omega^n\), where \(f \in L^{1}(\omega^n)\). Then there exist a unique \(c > 0\) and a function \(u \in \mathcal{E}(X, \beta)\) such that
	\[
	\langle(\beta + dd^c u)^n \rangle = c \mu.
	\]
\end{cor}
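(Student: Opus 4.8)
The plan is to obtain this as a limiting case of Theorem~\ref{thm: solve_equation_1}, letting the parameter $\lambda$ in the equation $\langle(\beta+dd^cu)^n\rangle=e^{\lambda u}\mu$ tend to $0$. I would dispose of the uniqueness of $c$ first, since it is immediate: if $u\in\mathcal E(X,\beta)$ satisfies $\langle(\beta+dd^cu)^n\rangle=c\mu$, then integrating over $X$ and using the defining property $\int_X\langle(\beta+dd^cu)^n\rangle=\int_X\beta^n$ of the full mass class forces
$$
c=\frac{\int_X\beta^n}{\mu(X)}.
$$
In particular $\mu(X)>0$ is necessary for solvability, so I would assume throughout that $\mu\not\equiv 0$.

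For existence, for each $\lambda>0$ invoke Theorem~\ref{thm: solve_equation_1} to get $u_\lambda\in\mathcal E(X,\beta)$ with $\langle(\beta+dd^cu_\lambda)^n\rangle=e^{\lambda u_\lambda}\mu$. Since $\beta$-psh functions on a compact manifold are bounded above, $M_\lambda:=\sup_X u_\lambda<+\infty$; set $v_\lambda:=u_\lambda-M_\lambda\in PSH(X,\beta)$, so $\sup_X v_\lambda=0$, $v_\lambda\in\mathcal E(X,\beta)$ (the class is translation invariant), and, with $c_\lambda:=e^{\lambda M_\lambda}>0$,
$$
\langle(\beta+dd^cv_\lambda)^n\rangle=c_\lambda\,e^{\lambda v_\lambda}\,\mu .
$$
Integrating and using $e^{\lambda v_\lambda}\le 1$ gives $\int_X\beta^n=c_\lambda\int_X e^{\lambda v_\lambda}\,d\mu$, hence the a priori lower bound $c_\lambda\ge \int_X\beta^n/\mu(X)$.

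Next I would pick $\lambda_j\to 0^+$. Using the $L^1(X)$-compactness of $\{w\in PSH(X,\beta):\sup_X w=0\}$, pass to a subsequence so that $v_{\lambda_j}\to v$ in $L^1(X)$ with $v\in PSH(X,\beta)$, and, along a further subsequence, $v_{\lambda_j}\to v$ $\omega^n$-a.e. Since $\mu=f\omega^n$ is absolutely continuous with respect to $\omega^n$ and vanishes on the pluripolar set $\{v=-\infty\}$, the convergence $v_{\lambda_j}\to v$ holds $\mu$-a.e. with $v>-\infty$ $\mu$-a.e., so $\lambda_j v_{\lambda_j}\to 0$ and $e^{\lambda_j v_{\lambda_j}}\to 1$ $\mu$-a.e. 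As $0\le e^{\lambda_j v_{\lambda_j}}\le 1\in L^1(\mu)$, dominated convergence gives $\int_X e^{\lambda_j v_{\lambda_j}}\,d\mu\to\mu(X)>0$, and therefore $c_{\lambda_j}\to c:=\int_X\beta^n/\mu(X)$. In particular $C:=\sup_j c_{\lambda_j}<+\infty$ and
$$
\langle(\beta+dd^cv_{\lambda_j})^n\rangle=c_{\lambda_j}e^{\lambda_j v_{\lambda_j}}\mu\le C\mu,
$$
a finite positive Radon measure vanishing on pluripolar sets. Theorem~\ref{thm: main-1}, applied to $v_{\lambda_j}\in\mathcal E(X,\beta)$, then yields $v\in\mathcal E(X,\beta)$, $v_{\lambda_j}\to v$ in capacity, and $\langle(\beta+dd^cv_{\lambda_j})^n\rangle\to\langle(\beta+dd^cv)^n\rangle$ weakly. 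On the other hand $c_{\lambda_j}e^{\lambda_j v_{\lambda_j}}\to c$ $\mu$-a.e. and is dominated by $C\in L^1(\mu)$, so testing against an arbitrary $\chi\in C^0(X)$ and applying dominated convergence shows $c_{\lambda_j}e^{\lambda_j v_{\lambda_j}}\mu\to c\mu$ weakly. By uniqueness of weak limits $\langle(\beta+dd^cv)^n\rangle=c\mu$, and $u:=v$ is the desired solution.

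The step I expect to be the main obstacle is the control of $c_\lambda$ as $\lambda\to 0$: the lower bound is automatic, but the upper bound amounts to keeping $\int_X e^{\lambda_j v_{\lambda_j}}\,d\mu$ bounded away from $0$ along a convergent subsequence, and this is precisely where both hypotheses on $\mu$ (vanishing on pluripolar sets, and $\mu\ll\omega^n$) enter, to upgrade $L^1$-convergence of the normalized potentials to $\mu$-a.e. convergence and then close by dominated convergence. Once $c_{\lambda_j}$ is bounded, the remainder is a routine application of the continuity theorem~\ref{thm: main-1} together with elementary dominated-convergence arguments for the right-hand sides.
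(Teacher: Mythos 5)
Your proof is correct and follows essentially the same route as the paper: solve the auxiliary equation $\langle(\beta+dd^cu)^n\rangle=e^{\lambda u}\mu$ for small $\lambda>0$ (Theorem~\ref{thm: solve_equation_1}), normalize to $\sup_X v_\lambda=0$, and pass to the limit using the continuity theorem. You fill in a small gap the paper leaves implicit --- the paper merely asserts that $(c_j)$ is uniformly bounded, whereas you derive convergence $c_{\lambda_j}\to \int_X\beta^n/\mu(X)$ via the $\mu$-a.e.\ convergence of the normalized potentials and dominated convergence --- and you obtain uniqueness of $c$ more directly from the full-mass identity $\int_X\langle(\beta+dd^cu)^n\rangle=\int_X\beta^n$ rather than by invoking Corollary~\ref{cor: comparison2}.
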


\begin{proof}
	By Theorem \ref{thm: solve_equation_1}, for each \(j \geq 1\), there exists \(u_j \in \mathcal{E}(X, \beta)\) such that
	\[
	\langle(\beta + dd^c u_j)^n \rangle = e^{u_j / j} \mu.
	\]
	Setting \(v_j = u_j - \sup_{X} u_j\), we write
	\[
	\langle(\beta + dd^c v_j)^n \rangle = c_j e^{v_j / j} \mu
	\]
	for some positive number \(c_j\). By the compactness properties of quasi-psh functions, we can extract a subsequence \(v_j \to v \in \operatorname{PSH}(X, \beta)\) in \(L^1(X)\) and almost everywhere. Since \(e^{v_j / j} \mu \to \mu\) and \((c_j)\) is uniformly bounded, we obtain
	\[
	\langle(\beta + dd^c v_j)^n \rangle \to c \mu
	\]
	for some \(c > 0\). Finally, by Theorem \ref{thm: maintheorem2}, \(v \in \mathcal{E}(X, \beta)\), and Theorem \ref{thm: main3} ensures weak convergence of measures. Uniqueness of \(c\) follows from Corollary \ref{cor: comparison2}.
\end{proof}

\section{An $L^{\infty}$ a prior estimate}
	Let $(X, \omega)$ be a Hermitian manifold and let $\beta$ be a closed real $(1, 1)$ form with $\int_X\beta^n>0$ and   there exists a bounded $\beta$-psh function $\rho$.  In this section we establish the $L^{\infty}$ a prior estimates (Theorem \ref{thm: linf est}) for Monge-Ampere equations in our setting. We follow the method of \cite{GL21}. The resulting uniform bound does not depend on the bound in Skoda's uniform integrability as obtained in \cite{LWZ24a}, but rather on the $L^m$ supremum of a compact subset in $\text{PSH}(X,\beta)$. We can't make use of this fact in this paper as it is still hard to verify the uniform boundedness in order to use Theorem \ref{thm: continuity 3} when we are solving equations, but we believe that it will be useful in the future.

Following the lines of the proof of \cite{GL21}, Lemma 1.7, we obtain the following lemma:
\begin{lem}\label{lem: concave weight}
    Fix a concave increasing function $\chi:\mathbb{R}^-\rightarrow\mathbb{R}^-$ such that $\chi'(0)\geq 1$. Let $\varphi\leq0$ be a bounded $\beta_{\rho}$-psh function and $\psi=\chi\circ\varphi$. Then
    $$
(\beta_{\rho}+dd^cP_{\beta+dd^c\rho}(\psi))^n\leq1_{\{P_{\beta+dd^c\rho}(\psi)=\psi\}}(\chi'\circ\varphi)^n(\beta_{\rho}+dd^c\varphi)^n.
    $$
\end{lem}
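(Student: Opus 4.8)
The plan is to transcribe the proof of \cite[Lemma 1.7]{GL21} to the present setting; write $\beta_\rho:=\beta+dd^c\rho$ and $\psi:=\chi\circ\varphi$. Since $\varphi$ is bounded, say $-M\le\varphi\le0$, and $\chi$ is continuous, increasing and $\le0$, the function $\psi$ is bounded ($\chi(-M)\le\psi\le0$) and quasi-continuous (it is $\chi$ composed with the quasi-continuous $\varphi$). As $\rho\in\PSH{X}{\beta}$ we have $\beta_\rho\ge0$, so the constant $\chi(-M)$ lies in the family defining $P_{\beta+dd^c\rho}(\psi)$; by Theorem \ref{thm: key}, $w:=P_{\beta+dd^c\rho}(\psi)\in\PSH{X}{\beta+dd^c\rho}\cap L^\infty(X)$ and $w\le\psi$ quasi-everywhere, and by Theorem \ref{thm: put_no_mass4} (in the $\beta_\rho$-version of Remark \ref{rem: second_version}) the Monge--Amp\`ere measure of $w$ is carried by the contact set $\{w=\psi\}=\{P_{\beta+dd^c\rho}(\psi)=\psi\}$. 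Hence it suffices to prove the global inequality $(\beta_\rho+dd^cw)^n\le(\chi'\circ\varphi)^n(\beta_\rho+dd^c\varphi)^n$ on $X$ and then to multiply by the indicator of the contact set.

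First I would reduce to smooth $\chi$: approximating $\chi$ uniformly on $[-M,0]$ by smooth concave increasing $\chi_j:\mathbb R^-\to\mathbb R^-$ with $\chi_j'(0)\ge1$, $\chi_j'\to\chi'$ and uniformly bounded derivatives on $[-M,0]$, one gets $\|P_{\beta+dd^c\rho}(\chi_j\circ\varphi)-w\|_{L^\infty(X)}\to0$ (by monotonicity of $P_{\beta+dd^c\rho}$ in the majorant and invariance under additive constants), hence the corresponding Monge--Amp\`ere measures converge weakly, while $(\chi_j'\circ\varphi)^n\to(\chi'\circ\varphi)^n$ boundedly; so it is enough to prove the global inequality for $\chi$ smooth. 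Then set $g:=\chi^{-1}$, a smooth convex increasing function with $0<g'\le1$ (because $\chi'\ge1$) and $g''\ge0$, and let $\tilde\varphi:=P_{\beta+dd^c\rho}(g\circ w)$. Writing $g$ as a countable supremum of affine functions with slopes in $[0,1]$, one sees that $g\circ w$ is, quasi-everywhere, the supremum of the $\beta_\rho$-psh functions lying below it, so $\tilde\varphi=g\circ w$ quasi-everywhere; in particular $\tilde\varphi\in\PSH{X}{\beta+dd^c\rho}\cap L^\infty(X)$, $w=\chi\circ\tilde\varphi$, $\tilde\varphi\le\varphi$ quasi-everywhere (apply $g$ to $w\le\chi\circ\varphi$), and $\tilde\varphi=\varphi$ (so $\chi'\circ\tilde\varphi=\chi'\circ\varphi$) quasi-everywhere on $\{w=\psi\}$. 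Since all Monge--Amp\`ere measures occurring do not charge pluripolar sets, these quasi-everywhere identities are harmless below.

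For the key estimate I would cover $X$ by charts $U_i$ with $\beta_\rho=dd^c\eta_i$, $\eta_i\in\PSH(U_i)\cap L^\infty(U_i)$, and on $U_i$ regularize by convolution to get smooth psh functions $s_i^{(k)}\downarrow w+\eta_i$ and $\eta_i^{(k)}\downarrow\eta_i$ (on slightly smaller charts); put $w^{(k)}:=s_i^{(k)}-\eta_i^{(k)}$ and $\tilde\varphi^{(k)}:=g\circ w^{(k)}=\chi^{-1}\circ w^{(k)}$. Applying the ordinary $C^2$ chain rule to $g$ and to $\chi$, and using $g'',g'\ge0$, $1-g'\ge0$, $\chi''\le0$, $\chi'\ge1$ together with positivity of $dd^cs_i^{(k)}$ and $dd^c\eta_i^{(k)}$, one obtains the pointwise inequalities of smooth $(1,1)$-forms
\[
0\ \le\ dd^c\eta_i^{(k)}+dd^c\tilde\varphi^{(k)}\ ,\qquad 0\ \le\ dd^cs_i^{(k)}\ \le\ \chi'(\tilde\varphi^{(k)})\,\big(dd^c\eta_i^{(k)}+dd^c\tilde\varphi^{(k)}\big)\ ,
\]
hence, by monotonicity of the determinant on nonnegative Hermitian forms and $(f\sigma)^n=f^n\sigma^n$ for a scalar function $f$,
\[
(dd^cs_i^{(k)})^n\ \le\ \big(\chi'(\tilde\varphi^{(k)})\big)^n\,\big(dd^c(\eta_i^{(k)}+\tilde\varphi^{(k)})\big)^n\ .
\]
Since $s_i^{(k)}\to w+\eta_i$ and $\eta_i^{(k)}+\tilde\varphi^{(k)}\to\eta_i+\tilde\varphi$ in capacity, uniformly bounded, and $\chi'(\tilde\varphi^{(k)})\to\chi'(\tilde\varphi)$ in capacity, the Bedford--Taylor convergence theorem and \cite[Theorem 2.6]{DDL23} (see also \cite[Theorem 2.6]{LLZ24}) let one pass to the limit on each $U_i$ and glue, giving $(\beta_\rho+dd^cw)^n\le(\chi'\circ\tilde\varphi)^n(\beta_\rho+dd^c\tilde\varphi)^n$ on $X$. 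Finally, Corollary \ref{cor: comparison} (in the $\beta_\rho$-version) applied to $\tilde\varphi\le\varphi$ quasi-everywhere gives $\ind{\{\tilde\varphi=\varphi\}}(\beta_\rho+dd^c\tilde\varphi)^n\le\ind{\{\tilde\varphi=\varphi\}}(\beta_\rho+dd^c\varphi)^n$; combining with the previous display, with the first paragraph, and with $\{w=\psi\}\subseteq\{\tilde\varphi=\varphi\}$ (where $\chi'\circ\tilde\varphi=\chi'\circ\varphi$) yields
\[
(\beta_\rho+dd^cw)^n=\ind{\{w=\psi\}}(\beta_\rho+dd^cw)^n\le\ind{\{w=\psi\}}(\chi'\circ\varphi)^n(\beta_\rho+dd^c\varphi)^n\ ,
\]
which is the assertion for $\chi$ smooth, and the general case follows from the reduction above.

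The one genuinely delicate step is the chain-rule estimate: the form $\chi'(\tilde\varphi)(\beta_\rho+dd^c\tilde\varphi)$ is \emph{not} closed, so its $n$-th power is not defined through the Bedford--Taylor product, and the remedy is to regularize both $w$ and the local potentials $\eta_i$ of $\beta_\rho$ so that the inequality becomes one between smooth positive $(1,1)$-forms --- where determinant monotonicity and pulling out scalar factors are elementary --- and only afterwards to pass to the measure limit via the available capacity-convergence results. A secondary point is that, with $\rho$ merely bounded, the envelope $P_{\beta+dd^c\rho}$ is \emph{not} upper-semicontinuously regularized, so $g\circ w$ must be replaced by its $\beta_\rho$-psh envelope and several identities hold only quasi-everywhere; this costs nothing because all Monge--Amp\`ere measures in play put no mass on pluripolar sets, and the envelope properties needed are exactly those of Theorem \ref{thm: key}.
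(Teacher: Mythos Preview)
Your approach is essentially the paper's: both introduce the inverse $\sigma=\chi^{-1}$, show that $v:=\sigma\circ\eta$ (your $\tilde\varphi=g\circ w$) is $\beta_\rho$-psh, derive the Monge--Amp\`ere inequality from the chain rule, and finish via Corollary~\ref{cor: comparison} applied to $v\le\varphi$. The paper's execution is much shorter, however: it simply writes the (formal) chain-rule inequality $\beta_\rho+dd^cv\ge\sigma'\circ\eta\,(\beta_\rho+dd^c\eta)$ and reads off both that $v$ is $\beta_\rho$-psh and that $(\beta_\rho+dd^c\eta)^n\le(\sigma'\circ\eta)^{-n}(\beta_\rho+dd^cv)^n$ on the contact set, so your preliminary reduction to smooth $\chi$, the detour through the envelope $P_{\beta_\rho}(g\circ w)$, and the explicit local regularization are all dispensed with.
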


\begin{proof}
    Set $\sigma:=\chi^{-1}:\mathbb{R}^-\rightarrow\mathbb{R}^-$, it is easy to see that $\sigma$ is a convex increasing function such that $\sigma'\leq 1$. Let $\eta:=P_{\beta+dd^c\rho}(\psi)$ and $v:=\sigma\circ\eta$.
    \begin{align*}
        \beta_{\rho}+dd^cv=&\beta_{\rho}+\sigma''\circ\eta d\eta\wedge d^c\eta+\sigma'\circ\eta dd^c\eta\\
        \geq&(1-\sigma'\circ\eta)\beta_{\rho}+\sigma'\circ\eta(\beta_{\rho}+dd^c\eta)\\
        \geq&\sigma'\circ\eta(\beta_{\rho}+dd^c\eta)\geq0.
    \end{align*}
    It follows that $v$ is $(\beta+dd^c\rho)$-psh and $(\beta_{\rho}+dd^c\eta)^n\leq1_{\{\eta=\psi\}}(\sigma'\circ\eta)^{-n}(\beta_{\rho}+dd^cv)^n$. On the contact set $\{\eta=\psi\}$ we have 
    $$
\sigma'\circ\eta=\sigma'\circ\psi=(\chi'\circ\psi)^{-1}.
    $$
    We also have $v\leq\sigma\circ\psi=\varphi$ quasi everywhere, hence everywhere on X. By corollary \ref{cor: comparison} we obtain that $(\beta_{\rho}+dd^cv)^n\leq(\beta_{\rho}+dd^c\varphi)^n$ and conclude the proof.
    
\end{proof}

\begin{thm}[=Theorem \ref{thm: linf est}]\label{thm: a prior estimate}
     Let $\mu$ be a finite positive Radon measure on X such that $\text{PSH}(X,\beta+dd^c\rho)\subset L^m(\mu)$ for some $m>n$ and $\mu(X)=\int_X\beta^n$. Then any solution $\varphi\in  \text{PSH}(X,\beta+dd^c\rho)\cap L^{\infty}(X)$ to $(\beta_{\rho}+dd^c\varphi)^n=\mu$, satisfies
     $$
Osc_X(\varphi)\leq T
     $$
     for some uniform constant T which only depends on $X,\beta$ and
     $$
A_m(\mu):=sup\left\{(\int_X(-\psi)^md\mu)^{\frac{1}{m}}: \psi\in \text{PSH}(X,\beta_{\rho})\,with\,\underset{X}{sup}\,\psi=0\right\}.
     $$ 
\end{thm}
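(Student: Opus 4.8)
The plan is to follow the envelope method of Guedj--Lu \cite{GL21} for the twisted form $\beta_\rho:=\beta+dd^c\rho$, using the machinery assembled in Section~2: the envelope identity of Theorem~\ref{thm: key}, the concentration-on-the-contact-set results of Theorem~\ref{thm: put_no_mass4} and Lemma~\ref{lem: concave weight}, the comparison principle of Remark~\ref{rem: second_version}(4), and the Domination Principle (Theorem~\ref{thm: domi_principal}).

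\emph{Set-up.} Adding a constant we may assume $\sup_X\varphi=0$, so $\mathrm{Osc}_X(\varphi)=-\inf_X\varphi=:M$ and it suffices to bound $M$. For $t\ge0$ put $g(t):=\mu(\{\varphi\le-t\})=(\beta_\rho+dd^c\varphi)^n(\{\varphi\le-t\})$, a non-increasing function with $g(0)=\mu(X)=V:=\int_X\beta^n$. Since $\varphi\in\mathrm{PSH}(X,\beta_\rho)$ is normalized, the definition of $A_m(\mu)$ together with Chebyshev's inequality gives $g(t)\le t^{-m}\int_X(-\varphi)^m\,d\mu\le t^{-m}A_m(\mu)^m$; more generally, for $r\ge1$ the dilate $r^{-1}\varphi$ is again a normalized element of $\mathrm{PSH}(X,\beta_\rho)$ (convexity of $\mathrm{PSH}(X,\beta_\rho)$, using $\beta_\rho\ge0$), so comparable bounds hold for the sublevel sets at every height. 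The hypothesis $m>n$ will enter exactly as the margin making the ensuing iteration terminate.

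\emph{Iteration.} For $0<s<t$ with $t-s\le1$, testing $\mathrm{Cap}_\beta$ against the potentials $h$ with $\rho\le h\le\rho+1$ and comparing $\varphi$ with the competitor $(t-s)(h-\rho)-t$, the comparison principle applied to bounded (hence full-mass) $\beta_\rho$-psh functions yields the classical estimate
$$
\mathrm{Cap}_\beta\big(\{\varphi\le-t\}\big)\le\frac{1}{(t-s)^n}\,\mu\big(\{\varphi\le-s\}\big)=\frac{g(s)}{(t-s)^n}.
$$
Combining this with the decay of $g$ and the condition $m>n$ produces, exactly as in \cite{GL21}, a self-improving recursion for $\phi(t):=g(t)^{1/n}$ in which each increment of height contributes a definite multiplicative gain with a modulus governed by $m/n>1$; iterated along a suitable increasing sequence of heights $t_k\uparrow t_\infty$ (with $\phi(t_k)$ decaying geometrically and $\sum_k(t_{k+1}-t_k)<\infty$), it forces $g(t_\infty)=0$ for some $t_\infty=t_\infty(n,m,V,A_m(\mu))$. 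In the spirit of \cite{GL21} the same conclusion can be reached by running the iteration directly on the envelopes $u_t:=P_{\beta+dd^c\rho}(\chi_t\circ\varphi)$ supplied by Lemma~\ref{lem: concave weight} for a tailored family of concave weights $\chi_t$, using $(\beta_\rho+dd^cu_t)^n\le\mathbf{1}_{\{u_t=\chi_t\circ\varphi\}}(\chi_t'\circ\varphi)^n\mu$, the identity $\int_X(\beta_\rho+dd^cu_t)^n=V$, and the comparison principle to transfer mass bounds between heights.

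\emph{Conclusion and main obstacle.} Once $\mu(\{\varphi\le-t_\infty\})=0$ we have $(\beta_\rho+dd^c\varphi)^n=0$ on the Borel set $\{\varphi<-t_\infty\}$. Since $\beta_\rho\ge0$, the constant $-t_\infty$ lies in $\mathrm{PSH}(X,\beta_\rho)\cap L^\infty(X)\subset\mathcal E(X,\beta_\rho)$, and $\varphi\in\mathcal E(X,\beta_\rho)$ because it is bounded; applying Theorem~\ref{thm: domi_principal} with $c=0$ to $u=\varphi$ and $v=-t_\infty$ gives $\varphi\ge-t_\infty$, whence $\mathrm{Osc}_X(\varphi)\le t_\infty=:T$, and by construction $T$ depends only on $n$ (hence on $X$), on $\beta$ (through $V$), and on $A_m(\mu)$. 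The delicate point is the iteration step: one must extract a genuine gain out of the mere $L^m(\mu)$-integrability with $m>n$ (rather than from an $L^p$-density hypothesis) and, crucially, arrange that every constant is uniform, i.e. depends only on $A_m(\mu)$ and not on the particular solution $\varphi$. This is where the bounded potential $\rho$ is indispensable: it makes all the envelope and contact-set results of Section~2 available for $\beta_\rho$ with $\varphi$-independent constants, and it provides the comparabilities $\mathrm{Cap}_\beta=\mathrm{Cap}_{\beta_\rho}\lesssim\mathrm{Cap}_\omega$ used in the estimates. The final Domination-Principle step is then routine.
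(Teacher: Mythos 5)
Your set-up (normalization, Chebyshev decay of $g(t)=\mu(\varphi\le-t)$, final application of the Domination Principle) is correct, and you have correctly identified the place where the real work must happen. However, the core of your argument --- the "iteration" step --- contains a genuine gap, and the alternative route you mention is not developed.

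The capacity estimate $\mathrm{Cap}_\beta(\{\varphi\le-t\})\le g(s)/(t-s)^n$ is fine, but it does not close into a self-improving recursion under the mere hypothesis $\mathrm{PSH}(X,\beta_\rho)\subset L^m(\mu)$. For a Kolodziej--De~Giorgi type iteration you need an inequality in the reverse direction, namely $\mu(E)\lesssim\mathrm{Cap}_\beta(E)^{1+\delta}$ for some $\delta>0$; this is exactly what one gets from an $L^p$-density (via H\"older plus Skoda-type volume-capacity comparison), but it is \emph{not} available here and you do not supply the extremal-function argument that would be needed to extract it from the $A_m(\mu)$ bound alone. You acknowledge this obstacle yourself at the end, but acknowledging it does not resolve it. Also, the suggestion that the envelope method of \cite{GL21} is "an iteration on $u_t=P_{\beta+dd^c\rho}(\chi_t\circ\varphi)$" misrepresents that method: it is not an iteration at all.

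The paper's actual proof, following \cite{GL22}, is a direct argument that completely bypasses capacity comparisons. It constructs a \emph{single} concave weight $\chi$ tailored to $\varphi$ by setting $g(x)=1+\int_0^x f$ with $f(t)=(1+t)^{-2}\mu(\varphi<-t)^{-1}$ on $[0,T]$ and $-\chi(-x)=\int_0^x g^{1/(n+2\epsilon)}$, where $m=n+3\epsilon$. Lemma~\ref{lem: concave weight} then gives $(\beta_\rho+dd^cu)^n\le\mathbf{1}_{\{u=\chi\circ\varphi\}}(\chi'\circ\varphi)^n\mu$ for $u=P_{\beta+dd^c\rho}(\chi\circ\varphi)$; a H\"older computation (this is precisely where $m>n$ enters, through the split $m=n+3\epsilon$) yields $\int_X(-u)^\epsilon(\beta_\rho+dd^cu)^n\le 2A_m(\mu)^\epsilon$, hence a uniform bound on $\sup_X u$ and then on $\|u\|_{L^m(\mu)}$. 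Since $u\le\chi\circ\varphi$ q.e., one obtains $\mu(\varphi<-t)\le B^m/|\chi(-t)|^m$, which combined with the defining relation $g'(t)=(1+t)^{-2}\mu(\varphi<-t)^{-1}$ gives a second-order differential inequality for $h(t):=-\chi(-t)$. Integrating this inequality twice bounds $T$, and letting $T\to T_{\max}$ finishes. None of this appears in your proposal; you would need to supply it (or an equivalent capacity-domination lemma) for the argument to be complete.
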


\begin{rem}
    As noted in \cite{GL21}, this theorem applies to measures $\mu=f\omega^n$ for $0\leq f\in L^p(X)$ with $p>1$. It also applies to more general densities, such as the Orlicz weight. We refer the reader to \cite{GL21} and \cite{GL22} for more details.
\end{rem}

\begin{proof}[Proof of Theorem \ref{thm: a prior estimate}]
The proof follows from \cite{GL22} Theorem 2.2. We can assume without loss of generality that $\underset{X}{sup}\,\varphi=0$ and $\mu(X)=\int_X\beta^n=1$. Set $T_{max}:=sup\{t>0:\mu(\varphi<-t)>0\}$. Then we have $\mu(\varphi<-T_{max})=0$. It follows that $\varphi\geq T_{max}$ [$\mu$] a.e., since we have $\mu=(\beta_{\rho}+dd^c\varphi)^n$, the domination principle Theorem \ref{thm: domi_principal} implies that $\varphi\geq-T_{max}$. We only need to establish the bound for $T_{max}$.

Let $\chi:\mathbb{R}^-\rightarrow\mathbb{R}^-$ be a concave increasing weight such that $\chi(0)=0$ and $\chi'(0)=1$. Set $\psi:=\chi\circ\varphi$ and $u:=P_{\beta+dd^c\rho}(\psi)$. By lemma \ref{lem: concave weight} we have
$$
(\beta_{\rho}+dd^cu)^n\leq1_{\{u=\psi\}}(\chi'\circ\varphi)^n\mu.
$$
\textbf{Step 1.} We first control the energy of u. Fix $\epsilon>0$ small and set $m:=n+3\epsilon$. Since $\chi(0)=0$ and $\chi$ is concave, we have $|\chi(t)|\leq |t|\chi'(t)$. We can thus estimate the energy :
\begin{align*}
\int_X(-u)^{\epsilon}(\beta_{\rho}+dd^cu)^n\leq&\int_X(-\chi\circ\varphi)^{\epsilon}(\chi'\circ\varphi)^nd\mu\leq\int_X(-\varphi)^{\epsilon}(\chi'\circ\varphi)^{n+\epsilon}d\mu\\    
\leq&\left(\int_X(-\varphi)^{n+2\epsilon}d\mu\right)^{\frac{\epsilon}{n+2\epsilon}}\left(\int_X(\chi'\circ\varphi)^{n+2\epsilon}d\mu\right)^{\frac{n+\epsilon}{n+2\epsilon}}\\
\leq&A_m(\mu)^{\epsilon}\left(\int_X(\chi'\circ\varphi)^{n+2\epsilon}d\mu\right)^{\frac{n+\epsilon}{n+2\epsilon}}.
\end{align*}
Where the first inequality follows since $u=\chi\circ\varphi$ on the support of $(\beta_{\rho}+dd^cu)^n$, the second inequality follows from the fact $|\chi(t)|\leq |t|\chi'(t)$, the third follows from Holder's inequality while the last is the definition of $A_m(\mu)$.

\textbf{Step 2.} We choose a appropriate weight $\chi$ such that $\int_X(\chi'\circ\varphi)^{n+2\epsilon}d\mu\leq 2$ is under control.  We first fix a $T\in[0,T_{max})$. Recall that if $g:\mathbb{R}^+\rightarrow\mathbb{R}^+$ is an increasing, absolutely continuous function (on [0,T]) with $g(0)=1$, then
$$
\int_Xg\circ(-\varphi)d\mu=\mu(X)+\int_0^{T_{max}}g'(t)\mu(\varphi<-t)dt.
$$
Set $f(t):=\begin{cases}  
       \frac{1}{(1+t)^2\mu(\varphi<-t)}, & \text{if } t \in [0,T] \\
       \frac{1}{(1+t)^2}, & \text{if } t >T , 
       \end{cases}$ , then $f(t)$ is a $L^1$ function. Set $g(x):=\int_0^xf(t)dt+1$. It is easy to check that g is absolutely continuous on $[0,T]$ and $g'(t)=f(t)$ almost everywhere (at least at every lebesgue point). 

Using the argument again, by letting $-\chi(-x):=\int_0^xg(t)^{\frac{1}{n+2\epsilon}}dt$, then$\chi(0)=0,\chi'(0)=1$ and $g(t)=(\chi'(-t))^{n+2\epsilon}$,note that this equality holds everywhere since g is continuous and $g\geq1$. The choice guarantees that $\chi$ is concave increasing with $\chi'\geq1$, and
$$
\int_X(\chi'\circ\varphi)^{n+2\epsilon}d\mu\leq \mu(X)+\int_0^{+\infty}\frac{1}{(1+t)^2}dt\leq 2.
$$
\textbf{Step 3.} We next turn to estimate the level set $\mu(\varphi<-t)$. Using step 2, we can establish a uniform lower bound for $\underset{X}{sup}\,u$ as follows:
\begin{align*}
(-\underset{X}{sup}\,u)^{\epsilon}=&(-\underset{X}{sup}\,u)^{\epsilon}\int_X(\beta_{\rho}+dd^cu)^n\\
\leq&\int_X(-u)^{\epsilon}(\beta_{\rho}+dd^cu)^n\\
\leq&2A_m(\mu)^{\epsilon}.
\end{align*}
This implies that $0\geq\underset{X}{sup}\,u\geq -2^{\frac{1}{\epsilon}}A_m(\mu)$. We infer from this that 
$$
||u||_{L^m(\mu)}\leq(1+2^{\frac{1}{\epsilon}})A_m(\mu):=B.
$$
Indeed, there exists $c\leq2^{\frac{1}{\epsilon}}A_m(\mu)$ such that $\underset{X}{sup}\,u+c=0$, we have by definition $||u+c||_{L^m(\mu)}\leq A_m(\mu)$, and thus $||u||_{L^m(\mu)}\leq c+A_m(\mu)\leq(1+2^{\frac{1}{\epsilon}})A_m(\mu)$ by the Minkowski inequality.

From $u\leq\chi\circ\varphi\leq0$ quasi everywhere (notice that from our condition $\mu$ does not charge pluripolar set), we deduce that $||\chi\circ\varphi||_{L^m(\mu)}\leq ||u||_{L^m(\mu)}\leq B$. We thus have
\begin{align*}
    \mu(\varphi<-t)=&\int_{\{\varphi<-t\}}d\mu\leq\int_X\frac{|\chi\circ\varphi|^m}{|\chi(-t)|^m}d\mu\\
    =&\frac{1}{|\chi(-t)|^m}||\chi\circ\varphi||_{L^m(\mu)}^m\leq \frac{B^m}{|\chi(-t)|^m}.
\end{align*}
\textbf{Step 4.} Conclusion. Set $h(t):=-\chi(-t)$. We have $h(0)=0 $ and $h'(t)=[g(t)]^{\frac{1}{n+2\epsilon}}$ is positive increasing, hence h is convex. Observe also that $g(t)\geq g(0)\geq1$, we have $h'(t)\geq1$ and $h(1)=\int_0^1h'(t)dt\geq 1$. For almost all $t\in[0,T]$,
$$
\frac{1}{(1+t)^2g'(t)}=\mu(\varphi<-t)\leq \frac{B^m}{h^m(t)}.
$$
This implies that 
$$
h^m(t)\leq B^m(1+t)^2g'(t)=(n+2\epsilon)B^m(1+t)^2h''(t)(h')^{n+2\epsilon-1}(t)
$$
almost everywhere.
It follows that we can multiply both sides by $h'$ and integrate between 0 and t, we obtain
\begin{align*}
    \frac{h^{m+1}(t)}{m+1}\leq&(n+2\epsilon)B^m\int_0^th''(s)(h')^{n+2\epsilon}(s)ds\\
    \leq&\frac{(n+2\epsilon)B^m(1+t)^2}{n+2\epsilon+1}((h')^{n+2\epsilon+1}(t)-1)\\
    \leq&B^m(1+t)^2(h')^{n+2\epsilon+1}(t)
\end{align*}
Note that the first and the second inequality holds because $h$ and $h'$ are absolutely continuous and hence we can take derivative and integrate.

Recall that: $\alpha:=m+1=n+3\epsilon+1>\gamma:=n+2\epsilon+1>2$.
The previous inequality then reads $(1+t)^{-\frac{2}{\gamma}}\leq Ch'(t)h(t)^{-\frac{\alpha}{\gamma}}$ for some uniform constant C depending only on $n,m,B$. Integrating both sides between 1 and T we get
$$
\frac{(1+T)^{1-\frac{2}{\gamma}}}{1-\frac{2}{\gamma}}-\frac{2^{1-\frac{2}{\gamma}}}{1-\frac{2}{\gamma}}\leq C\int_1^T\frac{(h^{1-\frac{\alpha}{\gamma}})'}{1-\frac{\alpha}{\gamma}}\leq C_1h(1)^{1-\frac{\alpha}{\gamma}}\leq C_1
$$
since $1-\frac{\alpha}{\gamma}<0$ and $h(1)\geq 1$. From this we deduce that T can be controlled by a constant $C_2$ which depends only on $A_m(\mu)$.

Finally, letting $T\rightarrow T_{max}$ we conclude the proof.
\end{proof}

\end{document}